\numberwithin{equation}{section}
\newtheorem{prop}[equation]{Proposition}
\newtheorem{thm}[equation]{Theorem}
\newcommand{\R}{\mathbb{R}}
\newcommand{\C}{\mathbb{C}}
\newcommand{\Z}{\mathbb{Z}}
\newcommand{\Sph}{\mathbb{S}}
\newcommand{\T}{\mathbb{T}}
\renewcommand{\Re}{\operatorname{Re}}
\renewcommand{\Im}{\operatorname{Im}}
\providecommand{\ii}{{\mathrm{i}}}
\providecommand{\im}{\operatorname{im}}
\providecommand{\Tcol}{\mathbf{T}}
\providecommand{\acirc}[1]{C_{(#1)}}
\providecommand{\acol}{\mathbf{C}}
\providecommand{\rcol}{\mathbf{\Gamma}}
\providecommand{\scol}{\mathbf{\Sigma}}
\providecommand{\scaf}{\mathbf{L}}
\providecommand{\ptcol}{\mathbf{p}}
\providecommand{\prcol}{\mathbf{\Omega}}
\providecommand{\ctrcol}{\boldsymbol{\omega}}
\providecommand{\phopf}{\Pi_{\mathrm{Hopf}}}
\providecommand{\pghopf}{\widehat{\Pi}_{\mathrm{Hopf}}}
\providecommand{\rot}{\mathsf{R}}
\providecommand{\refl}{{\underline{\mathsf{R}}}}
\providecommand{\grp}{\mathscr{G}}
\providecommand{\subgrp}{\mathscr{H}}
\providecommand{\stab}{\operatorname{Stab}}
\providecommand{\abs}[1]{\left\lvert#1\right\rvert}
\title{
New low-genus desingularizations of three Clifford tori
and related characterizations
}
\author{Nikolaos Kapouleas}
\address{
  Department of Mathematics,
  Brown University,
  Providence, RI 02912
}
\email{nicolaos\_kapouleas@brown.edu}
\author{David Wiygul}
\address{
  Dipartimento di Matematica,
  Universit\`{a} degli studi di Trento,
  38123 Povo TN, Italy
}
\email{davidjames.wiygul@unitn.it}
\begin{document}

\date{\today}

\begin{abstract}
For each nonnegative integer $m$
we construct in the round three-sphere
a closed embedded minimal surface
of genus $48m+25$
which can be interpreted
as a desingularization of the union
of three Clifford tori
intersecting pairwise orthogonally,
along a total of six great circles.
Each such surface is generated,
under the action of a group of symmetries,
by a disc with hexagonal boundary,
all of whose sides are contained in great circles.
We prove a uniqueness result for this disc,
and, as a corollary,
we characterize these surfaces.
This characterization implies
that similar surfaces we constructed
for sufficiently high $m$
by gluing methods,
in an earlier article,
coincide with the ones here.
For low $m$ the surfaces constructed here are new.
Similarly, we prove uniqueness
of the generating discs
for one of two families  
constructed by Choe and Soret
(namely the surfaces they call odd)
and show that these surfaces also coincide,
when of sufficiently high genus,
with surfaces we have constructed by gluing.
\end{abstract}

\maketitle

\section{Introduction}
In \cite{ChoeSoretTordesing}
J. Choe and M. Soret
constructed in the round $3$-sphere $\Sph^3$
for each integer $k \geq 2$
two families,
$\{T^o_{k,2km}\}_{m=1}^\infty$
and
$\{T^e_{k,2km}\}_{m=2}^\infty$,
of closed embedded minimal surfaces
which can be regarded as desingularizations
of the union of a maximally symmetric configuration
$\Tcol[k]$
of $k$ Clifford tori
intersecting transversely
along a pair of great circles
at distance $\pi/2$.
The surfaces
$T^o_{k,2km}$ and $T^e_{k,2km}$
both have genus
$1+4(k-1)km$.
Choe and Soret call these surfaces, respectively,
odd and even,
in view of their symmetries.
The construction of the odd surfaces
is reminiscent of Lawson's tiling construction
\cite{Lawson};
namely,
Choe and Soret solve a certain Plateau problem
with piecewise geodesic boundary in $\Sph^3$,
and they then extend the solution
to a closed embedded minimal surface
by applying the group generated by the reflections
through the great circles containing the boundary
of the solution.
The even surfaces
are instead constructed
by applying a different group of symmetries
to the solution
to a partially free boundary value problem,
subject to a certain symmetry condition,
but we will not discuss them further in this article.

In \cite{KWtordesing}
we applied gluing methods
to construct many closed embedded minimal surfaces
admitting the same interpretation
(as desingularizations of
$\bigcup \Tcol[k]$)
and also examples which desingularize rather
the union of the collection
$\Tcol[k,1]$
consisting of the Clifford tori
in the preceding collection $\Tcol[k]$
together with the additional Clifford torus
equidistant from the original two circles of intersection.
(This last union $\bigcup\Tcol[k,1]$
then has $2k+2$ great circles
of intersection.)
Since the surfaces of \cite{KWtordesing}
are constructed by gluing,
all have high genus.

In the present article we adapt
the Plateau tiling method
used in \cite{Lawson}
and, for the odd surfaces, in \cite{ChoeSoretTordesing}
to construct further desingularizations
of $\bigcup \Tcol[2,1]$,
a union of three Clifford tori
intersecting pairwise orthogonally
along six great circles.
We also adapt arguments
from \cite{KWlindex}
and \cite{KWlchar},
there used to establish uniqueness
and a certain graphicality of the Plateau solutions
generating the Lawson surfaces
(which can be regarded
as desingularizations of intersecting spheres),
in order to establish analogous results,
namely Theorem \ref{unique_disc},
for solutions to Plateau problems
generating desingularizations
of intersecting Clifford tori.
As one application of Theorem \ref{unique_disc}
we obtain a characterization
of the new desingularizations of
$\bigcup \Tcol[2,1]$
which in particular suffices
to confirm that they agree,
when of sufficiently high genus,
with the members of a certain subfamily
of the surfaces we constructed in
\cite{KWtordesing}.

\begin{thm}[Desingularizations
  of three Clifford tori
  intersecting pairwise orthogonally]
\label{thm:three_tori}
For each integer $m \geq 0$
there exists in $\Sph^3$
a unique closed embedded minimal surface
$M_m$
whose intersection with the interior
of the prism $\Omega_{00}$
(defined below \ref{centers}
and described in \ref{prisms_for_three_tori})
is nonempty and satisfies
\begin{equation*}
  \partial (M_m \cap \Omega_{00})
  =
  \Omega_{00} \cap \bigcup (\rcol \cup \scaf^{4+8m})
\end{equation*}
(where $\rcol$ and $\scaf^{4+8m}$
are collections of great circles
specified in
\ref{rcol} and \ref{scaf}).
Furthermore,
$M_m$ has genus $25+48m$,
contains every great circle in
$\rcol \cup \scaf^{4+8m}$,
and is invariant under the group
$\grp^{4+8m}$
(defined in \ref{group_definition}
and described in \ref{group_description}).
Finally, in view of the uniqueness assertion,
for all sufficiently large $m$
the surface $M_m$ is congruent to
the surface in \cite{KWtordesing}
obtained
(as in \cite{KWtordesing}*{Theorem 7.1})
from the initial surface
$N(2, 2m+1, 1, 1, 1, 0, 1)$
(as defined in \cite{KWtordesing}*{Definition 4.13}).
\end{thm}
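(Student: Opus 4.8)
The plan is to realize $M_m$ by a Lawson-type tiling construction (as in \cite{Lawson}, and for the odd surfaces in \cite{ChoeSoretTordesing}), to deduce the uniqueness of $M_m$ from Theorem \ref{unique_disc}, and then to obtain the congruence with the surface of \cite{KWtordesing} formally from that uniqueness. For the existence I would first read off from \ref{centers}, \ref{prisms_for_three_tori}, \ref{rcol}, \ref{scaf}, and \ref{group_definition} the combinatorial picture: the great circles of $\rcol \cup \scaf^{4+8m}$ meet $\Omega_{00}$ in exactly six geodesic arcs, which together form a geodesic hexagon $\Gamma_m \subset \partial\Omega_{00}$; the closed prism $\overline{\Omega_{00}}$ is geodesically convex (hence mean convex), is a fundamental domain for $\grp^{4+8m}$, and the translates $\{g\,\overline{\Omega_{00}} : g \in \grp^{4+8m}\}$ form a face-to-face tessellation of $\Sph^3$, with the dihedral angle of $\Omega_{00}$ along each side of $\Gamma_m$ a submultiple of $\pi$. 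I would then take $D_m$ to be a least-area disc bounded by $\Gamma_m$ in $\overline{\Omega_{00}}$ (existence being guaranteed by the totally geodesic faces of $\Omega_{00}$ acting as barriers); by a Meeks--Yau argument $D_m$ is embedded, and by the maximum principle $\operatorname{int} D_m \subset \operatorname{int}\Omega_{00}$ and $D_m \cap \partial\Omega_{00} = \Gamma_m$.

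Next I would extend $D_m$ by Schwarz reflection, i.e.\ rotation by $\pi$ about the great circle carrying each side of $\Gamma_m$; this glues $D_m$ smoothly across its boundary, and since the cells $g\,\overline{\Omega_{00}}$ tessellate $\Sph^3$ and meet only along common faces, the translates $gD_m$ fit together along their boundaries without crossing, so their union $M_m$ is a closed, connected, smooth, embedded minimal surface with $M_m \cap \overline{\Omega_{00}} = D_m$. In particular the displayed boundary identity holds; each side of $\Gamma_m$ is a geodesic arc of $M_m$, so $M_m$ contains every great circle of $\rcol \cup \scaf^{4+8m}$; and $M_m$ is invariant at least under the subgroup of $\grp^{4+8m}$ generated by the reflecting rotations. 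To promote the symmetry to all of $\grp^{4+8m}$ I would use that $D_m$ is the \emph{unique} minimal disc in $\overline{\Omega_{00}}$ bounded by $\Gamma_m$, which is the content of Theorem \ref{unique_disc}: any element of $\grp^{4+8m}$ stabilizing $\Omega_{00}$ preserves $\Gamma_m$, hence preserves $D_m$, and together with the reflecting rotations this gives full $\grp^{4+8m}$-invariance (cf.\ \ref{group_description}). The genus is then a bookkeeping computation: $M_m$ is tessellated by the $\abs{\grp^{4+8m}}$ translates of $D_m$, so $\chi(M_m) = \abs{\grp^{4+8m}}\cdot\chi^{\mathrm{orb}}(M_m/\grp^{4+8m})$, the quotient being the hexagonal disc $D_m$ with a mirror-boundary orbifold structure whose corner orders are those fixed in \ref{group_definition}; equivalently one counts faces, edges, and vertices of the induced cell decomposition of $M_m$. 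As $\abs{\grp^{4+8m}}$ is affine in $m$, either route gives $\chi(M_m) = -48 - 96m$, that is, genus $25 + 48m$.

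For the uniqueness, suppose $M$ is a (connected) closed embedded minimal surface with $M \cap \operatorname{int}\Omega_{00} \neq \emptyset$ and $\partial(M \cap \Omega_{00}) = \Omega_{00} \cap \bigcup(\rcol \cup \scaf^{4+8m}) = \Gamma_m$. Since $\Omega_{00}$ lies in an open hemisphere of $\Sph^3$, $M \cap \overline{\Omega_{00}}$ has no closed component, so it is a compact minimal surface in $\overline{\Omega_{00}}$ bounded exactly by $\Gamma_m$, whence Theorem \ref{unique_disc} identifies it with $D_m$. As minimal surfaces are real analytic, unique continuation together with the reflection principle propagates the inclusion $D_m \subset M$ to $M_m \subset M$; since $M_m$ is already closed and $M$ is connected, $M = M_m$. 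Granting the preparatory material of the earlier sections, the remaining points needing care are: (i) checking that $\Gamma_m$ lies in the class of contours to which Theorem \ref{unique_disc} applies---this is what licenses both the symmetry upgrade and the uniqueness---and (ii) for the congruence below, verifying that the surface produced in \cite{KWtordesing} genuinely carries the full group $\grp^{4+8m}$ and contains every circle of $\rcol \cup \scaf^{4+8m}$. The one substantial analytic input is Theorem \ref{unique_disc} itself, adapted from \cite{KWlindex} and \cite{KWlchar}; everything else (Plateau plus Meeks--Yau for a single disc, then a fixed orbifold Euler-characteristic count) is routine.

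For the final assertion, recall that for $m$ sufficiently large \cite{KWtordesing}*{Theorem 7.1} produces from the initial surface $N(2,2m+1,1,1,1,0,1)$ of \cite{KWtordesing}*{Definition 4.13} a closed embedded minimal surface $\widetilde{M}_m$ which, by the construction there, is $\grp^{4+8m}$-invariant (the initial surface carries this symmetry and the gluing is carried out equivariantly), contains every great circle of $\rcol \cup \scaf^{4+8m}$ (these lie on the initial surface and are preserved under the gluing), and meets $\operatorname{int}\Omega_{00}$. Its $\grp^{4+8m}$-invariance---which, per \ref{group_description}, includes the reflections through the great spheres bounding $\Omega_{00}$---together with $\widetilde{M}_m \supset \bigcup(\rcol \cup \scaf^{4+8m})$ and the way these circles cut $\overline{\Omega_{00}}$ forces $\partial(\widetilde{M}_m \cap \Omega_{00}) = \Omega_{00} \cap \bigcup(\rcol \cup \scaf^{4+8m})$. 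Hence $\widetilde{M}_m$ satisfies exactly the conditions characterizing $M_m$, and the uniqueness just established gives $\widetilde{M}_m = M_m$ up to the ambient isometry normalizing $\Omega_{00}$; that is, the two surfaces are congruent, in accordance with the genus $25 + 48m$ recorded for $\widetilde{M}_m$ in \cite{KWtordesing}.
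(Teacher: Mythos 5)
There is a genuine gap, and it sits exactly where the main difficulty of the theorem lies: the step in which the reflected copies of the disc are assembled into a closed embedded surface. Your premises for this step are false: $\overline{\Omega_{00}}$ is not a fundamental domain for $\grp^{4+8m}$ and its translates do not tessellate $\Sph^3$ --- the prisms of $\prcol^N$ ($N=4+8m$) tile only the union of the six solid tori $T_C$, $C \in \acol$, and the $\grp^N$-orbit of $\Omega_{00}$ contains only $24N$ of the $48N$ prisms (each prism has an order-$2$ stabilizer, \ref{prisms_for_three_tori}.\ref{axial_sym}); moreover along the edges lying on circles of $\scaf^N$ the dihedral angle is not constant, let alone a submultiple of $\pi$ (\ref{prism_geometry}(vi)). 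Consequently the assertion that the translates $gD_m$ ``fit together without crossing'' is precisely what has to be proved, and it is the point where the hypothesis $N \in 4+8\Z$ enters. The paper proves it combinatorially: $\prcol^N$ admits a consistent two-coloring exactly because $N \equiv 4 \pmod 8$ (\ref{consistency}), hence the $\grp^N$-action on $\prcol^N$ has precisely two orbits (\ref{colorings_and_orbits}, \ref{group_description}.\ref{two_orbits}); then $M_m := \grp^N(D)$ places one copy of $D$ in each prism of a single ``checkerboard'' orbit, different group elements sending $\Omega_{00}$ to the same prism produce the same copy by the uniqueness in \ref{unique_disc}, and this is what yields embeddedness and $\grp^N$-invariance simultaneously. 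Your argument never engages with this consistency obstruction; as written it would equally ``prove'' the statement for every $N \in 4\Z$, which the tiling method does not deliver, and your separate upgrade to full $\grp^N$-invariance (side reflections plus the stabilizer of $\Omega_{00}$) is not justified either, since those elements do not obviously generate $\grp^N$.

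Two secondary points. First, the bookkeeping: $M_m$ is the union of $24N$ (not $\abs{\grp^{4+8m}}=48N$) distinct copies of $D$; the paper gets the genus by Gauss--Bonnet --- each hexagonal disc with six right vertex angles has total curvature $-\pi$, so $M_m$ has total curvature $-24N\pi$, giving genus $6N+1 = 48m+25$ --- whereas your orbifold formula, while repairable, is only asserted. Second, for the congruence with the surface of \cite{KWtordesing} you take for granted that the construction there is $\grp^{4+8m}$-equivariant; in fact only a proper subgroup was enforced in \cite{KWtordesing}, and the paper must argue (as in the proof of Theorem \ref{odd_cs_uniqueness}) that the initial surface $N(2,2m+1,1,1,1,0,1)$ is invariant under the larger group, that the minimal perturbation is unique for large $m$ (inverse function theorem in place of Schauder), and it must also correct a misprint in \cite{KWtordesing}*{4.15} to identify the right initial surface. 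Your uniqueness argument, by contrast, is essentially the paper's (identify $M \cap \Omega_{00}$ with $D$ via \ref{unique_disc}, then propagate by reflection and unique continuation) and is fine.
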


Section \ref{sec:three_tori}
is devoted to the proof of
Theorem \ref{thm:three_tori}.

As another application of Theorem \ref{unique_disc},
we obtain also a characterization
of the odd surfaces of Choe and Soret,
which in particular confirms that they agree,
when of sufficiently high genus,
with the members of a certain subfamily
we constructed in \cite{KWtordesing}.

\begin{thm}[A characterization
  of the odd surfaces of Choe and Soret
  \cite{ChoeSoretTordesing}]
\label{odd_cs_uniqueness}
For all integers $k \geq 2$ and $m \geq 1$
there is in $\Sph^3$
a unique closed embedded minimal surface
whose intersection with the interior of
$\Omega_{\frac{\pi}{4},\frac{\pi}{k},\frac{\pi}{2km}}$
(as defined in \ref{prism_def}
and described in \ref{prism_geometry})
is nonempty and has boundary the hexagon
consisting of all the geodesic edges of
$\Omega_{\frac{\pi}{4},\frac{\pi}{k},\frac{\pi}{2km}}$
except the one along $\acirc{1}$;
this surface is 
congruent to $T^o_{k,2km}$
(as constructed in \cite{ChoeSoretTordesing}*{Theorem 1}).
In particular for all $m$ sufficiently large
$T^o_{k,2km}$ is congruent to
the surface in \cite{KWtordesing}
obtained
(as in \cite{KWtordesing}*{Theorem 7.1})
from the initial surface
$M(k,2m,1,1,0)$
(as defined in \cite{KWtordesing}*{Definition 4.13}).
\end{thm}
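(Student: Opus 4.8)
The plan is to deduce the theorem from Theorem~\ref{unique_disc} together with the reflection (``tiling'') principle underlying the constructions of \cite{Lawson} and \cite{ChoeSoretTordesing}; the work then splits into checking that $T^o_{k,2km}$ (and later the relevant surface of \cite{KWtordesing}) meets the prism in a disc of the kind controlled by Theorem~\ref{unique_disc}, and running the tiling argument backwards. Write $\Omega := \Omega_{\pi/4,\,\pi/k,\,\pi/(2km)}$ and let $\Gamma$ be the hexagonal piecewise geodesic Jordan curve formed by the six edges of $\Omega$ other than the one on $\acirc{1}$. By \ref{prism_def} and \ref{prism_geometry} the faces of $\Omega$ lie on totally geodesic great two-spheres, the images of $\Omega$ under the group generated by the reflections through those spheres tile $\Sph^3$, and the rotations by $\pi$ about the great circles carrying the edges of $\Gamma$ (the ``reflections'' of \cite{ChoeSoretTordesing}) generate the group that Choe and Soret apply to their Plateau solution. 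After normalizing the ambient isometry so that the Choe--Soret configuration matches the one of \ref{prism_geometry}, $T^o_{k,2km}$ is by construction (\cite{ChoeSoretTordesing}*{Theorem~1}) obtained by solving the Plateau problem for $\Gamma$ and extending by those rotations; hence the Plateau solution $D := T^o_{k,2km} \cap \Omega$ is a nonempty embedded minimal surface in $\overline{\Omega}$ with $\partial D = \Gamma$, so $T^o_{k,2km}$ has the asserted property and existence holds. Moreover, by Theorem~\ref{unique_disc}, $D$ is the \emph{unique} minimal surface in $\overline{\Omega}$ with boundary $\Gamma$, and it is an embedded disc, graphical in the relevant sense.

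For uniqueness, let $M$ be any closed embedded minimal surface with $M \cap \operatorname{int}\Omega \neq \emptyset$ and $\partial(M \cap \Omega) = \Gamma$. The maximum principle against the totally geodesic faces of $\Omega$ excludes any closed component of $M \cap \Omega$; and since $\Gamma$ is a single Jordan curve, it cannot be the disjoint union of the nonempty relative boundaries of two distinct components; hence $M \cap \Omega$ is connected with relative boundary $\Gamma$, and Theorem~\ref{unique_disc} (whose graphicality conclusion pins down such a surface) forces $M \cap \Omega = D$. Because $\Gamma \subset M$ consists, away from its vertices, of geodesic arcs interior to $M$, the Schwarz reflection principle shows that $M$ is invariant under the rotation by $\pi$ about each great circle carrying an edge of $\Gamma$; as $M$ coincides with $D$ on $\Omega$, it coincides with the appropriate rotated copy of $D$ on each adjacent tile, and iterating over the tiling of $\Sph^3$ we conclude that $M$ is the union of all group-images of $D$, which is precisely the closed embedded surface Choe and Soret assemble, i.e.\ $T^o_{k,2km}$ in the chosen normalization. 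Hence $M$ is congruent to $T^o_{k,2km}$, which establishes both the uniqueness and the congruence assertions.

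For the identification with the gluing construction, \cite{KWtordesing}*{Theorem~7.1} provides $m_0 = m_0(k)$ such that for all $m \geq m_0$ the initial surface $M(k,2m,1,1,0)$ of \cite{KWtordesing}*{Definition~4.13} yields a closed embedded minimal surface $M'$ which is invariant under the same symmetry group and which --- as one reads off from that construction, $M'$ being a small normal graph over the desingularized configuration on each tile --- satisfies $M' \cap \operatorname{int}\Omega \neq \emptyset$ and $\partial(M'\cap\Omega) = \Gamma$. By the uniqueness just proved, $M'$ is congruent to $T^o_{k,2km}$, as claimed. I expect the most delicate step to be the reduction in the uniqueness argument, namely showing that an arbitrary competitor $M$ meets $\Omega$ in a connected, simply connected piece to which Theorem~\ref{unique_disc} applies, together with the bookkeeping that identifies the Choe--Soret fundamental domain, boundary contour, and symmetry group with $\Omega$, $\Gamma$, and the reflection group of \ref{prism_geometry}; the reflection/analytic-continuation step and the appeal to \cite{KWtordesing} are then comparatively routine.
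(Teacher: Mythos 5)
Your first two steps follow essentially the same route as the paper: identify $T^o_{k,2km}\cap\Omega$ with the unique Plateau solution of Theorem \ref{unique_disc}, and recover any competitor $M$ from $M\cap\Omega$ by Schwarz reflection about the great circles carrying the hexagon edges. (One factual slip there: the faces of $\Omega_{\pi/4,\pi/k,\pi/2km}$ are \emph{not} all totally geodesic --- by Proposition \ref{prism_geometry} only the two triangular faces lie on great spheres, while the rectangles lie on Clifford tori and the parallelogram on a constant-mean-curvature torus --- so your exclusion of closed components of $M\cap\Omega$ ``by the maximum principle against the totally geodesic faces'' needs a different justification, e.g.\ sweeping by the great spheres $\Sigma_z$ or using \ref{prism_geometry}(x); this is easily repaired and the paper does not dwell on it.)

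The genuine gap is in the final step, the identification with the surface of \cite{KWtordesing} obtained from $M(k,2m,1,1,0)$. You assert that this surface ``is invariant under the same symmetry group'' and that $\partial(M'\cap\Omega)=\Gamma$ can be ``read off'' from its being a small normal graph over the initial surface. Neither is available as stated: in \cite{KWtordesing} only a \emph{proper subgroup} of the Choe--Soret group $G^o$ is enforced, and a small normal graph over an initial surface containing the hexagon need not itself contain the hexagon --- the graphing function could be nonzero on those great-circle arcs. What forces the perturbation to vanish there is precisely invariance under the half-turns about those circles, i.e.\ under elements of $G^o$ not contained in the enforced subgroup. The paper therefore has to argue (a) that the construction of \cite{KWtordesing} can be run equivariantly for the full group $G^o$ (the right-inverse to the linearized operator constructed there already respects $G^o$), and (b) that the minimal perturbation is \emph{unique} for large $m$ --- obtained by replacing the Schauder fixed point argument with the inverse function theorem --- so that the surface produced with the smaller group coincides with the $G^o$-invariant one. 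Only then does $G^o$-invariance, combined with closeness to the initial surface, yield that the intersection with the prism is nonempty with boundary the hexagon, at which point your uniqueness statement applies. Without supplying (a) and (b), the last paragraph of your argument does not go through.
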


The proof of Theorem \ref{odd_cs_uniqueness}
is given at the end of Section \ref{sec:uniqueness}.

\subsection*{Acknowledgments}
This project has received funding
from the European Research Council (ERC)
under the European
Union’s Horizon 2020 research and innovation programme
(grant agreement No. 947923).
Specifically, DW was supported by the preceding grant
and thanks Alessandro Carlotto for making
that funding possible and for his interest
in this line of research.

NK would like to thank
the Simons Foundation
for Collaboration Grant 962205,
which facilitated travel related to this research.

\section{Notation and conventions}
Given $X \subseteq \Sph^3$,
we write $d^{\Sph^3}_X$ for the function assigning
to each point in $\Sph^3$ its distance to $X$.
Given a great circle $C$,
we write $C^\perp$ for the unique great circle
on which $d^{\Sph^3}_C$ is constantly $\pi/2$.
Given $t \in \R$
together with a great circle $C$
and an orientation on $C^\perp$,
we write $\rot_C^t$ for the isometry of $\Sph^3$
fixing $C$ identically and rotating $C^\perp$
along itself through angle $t$;
we write $\refl_C:=\rot_C^\pi$
for reflection through $C$
(well-defined
even in the absence
of an orientation on $C^\perp$).
If $\{\mathsf{T}_\alpha\}_{\alpha \in A}$
is a collection of a isometries of $\Sph^3$
(elements of $O(4)$),
we write
$\langle \mathsf{T}_\alpha \rangle_{\alpha \in A}$
for the subgroup of $O(4)$ they generate.

We will make the identifications
\begin{equation}
\begin{gathered}
  \Sph^3 
  :=
  \{
    (w_1,w_2) \in \C^2
    \; : \;
    \abs{w_1}^2+\abs{w_2}^2 = 1
  \},
  \\
  \Sph^2(1/2)
  :=
  \left\{
    (x,y,z) \in \R^3
    \; : \;
    x^2+y^2+z^2=1/4
  \right\},
\end{gathered}
\end{equation}
as sets,
and we define the Hopf projection
\begin{equation}
\begin{gathered}
  \phopf: \Sph^3 \to \Sph^2(1/2)
  \\
  (w_1,w_2)
  \mapsto
  \left(
    \Re w_1\overline{w_2}, ~
    \Im w_1\overline{w_2}, ~
    \frac{\abs{w_1}^2-\abs{w_2}^2}{2}
  \right),
\end{gathered}
\end{equation}
a Riemannian submersion,
with the usual round metrics on the domain and target.

Throughout the article we distinguish the great circle
\begin{equation}
  \acirc{1} := \{(e^{\ii s},0) \in \Sph^3 \; : \; s \in \R\},
\end{equation}
so that
\begin{equation}
  \acirc{1}^\perp := \{(0,e^{\ii t}) \in \Sph^3 \; : \; t \in \R\},
\end{equation}
and we orient $\acirc{1}$ and $\acirc{1}^\perp$ so that
\begin{equation}
  \rot_C^s(w_1,w_2)
    =
    (w_1, e^{\ii s}w_2),
  \quad
  \rot_{C^\perp}^t(w_1,w_2)
    =
    (e^{\ii t}w_1,w_2).
\end{equation}
The fibers of $\phopf$
are then the orbits of
$\{\rot_{\acirc{1}}^t\rot_{\acirc{1}^\perp}^t\}_{t \in \R}$,
and,
orienting each orbit in the direction of increasing $t$,
we have
\begin{equation*}
  \rot_C^t\rot_{C^\perp}^t
  =
  \rot_{\acirc{1}}^t\rot_{\acirc{1}^\perp}^t
\end{equation*}
for any orbit $C$ and any $t \in \R$.

We denote the sets of unitary
and antiunitary transformations on $\C^2$
by, respectively,
\begin{equation}
\begin{aligned}
  U(2)
    &:=
    \{
      \mathsf{T}: \C^2 \to \C^2
      \; : \;
      \forall x,y \in \C^2 \;\;
      \langle \mathsf{T}x, \mathsf{T}y \rangle_{\C^2}
        =
        \langle x,y \rangle_{\C^2}
    \},
  \\
  \overline{U}(2)
    &:=
    \{
      \mathsf{T}: \C^2 \to \C^2
      \; : \;
      \forall x,y \in \C^2 \;\;
      \langle \mathsf{T}x, \mathsf{T}y \rangle_{\C^2}
        =
        \overline{\langle x,y \rangle}_{\C^2}
    \}
\end{aligned}
\end{equation}
(where $\langle \cdot,\cdot\cdot \rangle_{\C^2}$
is the standard (Hermitian) inner product on $\C^2$)
and we identify them
as subsets of $O(4)$ in the obvious way.
(Then $U(2)$ and $U(2) \cup \overline{U}(2)$
are in fact subgroups of $O(4)$.)
We observe that $\rot \in O(4)$
preserves the collection of fibers of $\phopf$
if and only if
it is unitary or antiunitary,
and we define the map
\begin{equation}
\label{pghopf_declaration}
  \pghopf: U(2) \cup \overline{U}(2) \to O(3)
\end{equation}
(the target being the group of orthogonal
transformations on $\R^3$)
which takes any given
$\rot \in U(2) \cup \overline{U}(2)$
to the unique element $\pghopf \rot$ of $O(3)$
satisfying
\begin{equation}
\label{pghopf_definition}
  \phopf \circ \rot
  =
  (\pghopf \rot) \circ \phopf.
\end{equation}
Then $\pghopf$ is a homomorphism.
Note that
$U(2)$ preserves the orientations
of the fibers of $\phopf$,
while $\overline{U}(2)$ reverses them,
and that
$\pghopf(U(2))=SO(3)$
(the group of orientation-preserving
orthogonal transformations on $\R^3$).

\section{Uniqueness and graphicality 
of solutions to the Plateau problems}
\label{sec:uniqueness}

We start with some definitions.
First, we set
\begin{equation}
  C^0_0 := \{(\cos r, \sin r) \; : \; r \in \R\},
\end{equation}
the great circle orthogonally intersecting
$\acirc{1}$ at $(\pm 1,0)$
and $\acirc{1}^\perp$ at $(0,\pm 1)$,
and we set
\begin{equation}
  C^{\pi/2}_{\pi/2}
  :=
  (C^0_0)^\perp
  =
  \{(i \cos r, i \sin r) \; : \; r \in \R\}.
\end{equation}
Next, for each $z \in \R$ we define
\begin{equation}
  \Sigma_z
  :=
  \{
    (e^{\ii z}\cos r, e^{\ii\theta} \sin r) \in \Sph^3
    \; : \;
    r,\theta \in \R
  \},
\end{equation}
the great sphere containing $\acirc{1}^\perp$
and intersecting $\acirc{1}$ orthogonally
at $(\pm e^{\ii z},0)$,
and for each $\theta \in \R$ we define also
\begin{equation}
  \Sigma^\theta
  :=
  \{
    (e^{\ii z}\cos r, e^{\ii\theta} \sin r) \in \Sph^3
    \; : \;
    r,z \in \R
  \},
\end{equation}
the great sphere containing $\acirc{1}$
and intersecting $\acirc{1}^\perp$ orthogonally
at $(0,\pm e^{\ii\theta})$,
and
\begin{equation}
  \T^\theta
  :=
  \left\{
    e^{\ii z}(\cos r, e^{\ii\theta} \sin r) \in \Sph^3
    \; : \;
    r,z \in \R
  \right\},
\end{equation}
a Clifford torus (one of two)
containing
$\acirc{1} \cup \rot_{\acirc{1}}^\theta C^0_0$
(and $\acirc{1}^\perp \cup \rot_{\acirc{1}}^\theta C^{\pi/2}_{\pi/2}$).
Given $r \in (0,\pi/2)$, we define
\begin{equation}
  T^r := \{d^{\Sph^3}_{\acirc{1}}=r\}
  =
  \{
    e^{iz}(\cos r, e^{i\theta}\sin r)
    \; : \;
    \theta,z \in \R
  \},
\end{equation}
a constant-mean-curvature torus.
Finally, given $S \in (0,\pi/4]$, $\Theta \in (0,\pi/2]$,
and $Z \in (0,\pi/4]$,
we define 
\begin{equation}
\label{prism_def}
  \Omega_{S,\Theta,Z} 
  :=
  \left\{
    e^{\ii z}(\cos r, e^{\ii\theta} \sin r) \in \Sph^3
    \; : \;
    r \in [0,S]
    \mbox{ and }
    \theta \in
      \left[
        -\frac{\Theta}{2},\frac{\Theta}{2}
      \right]
    \mbox{ and }
    z \in
      \left[
        -\frac{Z}{2},\frac{Z}{2}
      \right]
  \right\},
\end{equation}
a sphericotoral pentahedron,
or a prism with base an isosceles triangle,
as described presently in greater detail.
As all claims of the below proposition
are simple observations,
we do not include a proof.

\begin{prop}[Geometry of $\Omega_{S,\Theta,Z}$]
\label{prism_geometry}
Let $S \in (0,\pi/4]$, $\Theta \in (0,\pi/2]$,
and $Z \in (0,\pi/4]$.
Then $\Omega_{S,\Theta,Z}$
is homeomorphic
to a closed $3$-ball
and has the following properties:
\begin{enumerate}[(i)]
\item 
$
  \partial\Omega_{S,\Theta,Z}
  =
  R_{S,\Theta,Z}^- \cup R_{S,\Theta,Z}^+
   \cup T_{S,\Theta,Z}^- \cup T_{S,\Theta,Z}^+
   \cup P_{S,\Theta,Z}
$,
where $R_{S,\Theta,Z}^{\pm}$ are solid rectangles
lying on the two Clifford tori
$\T^{\pm \Theta/2}$,
$T_{S,\Theta,Z}^{\pm}$ are solid triangles
lying on the two great spheres
$\Sigma_{\pm Z/2}$,
and $P_{S,\Theta,Z}$ is a solid parallelogram
lying on the constant-mean-curvature torus
$T^S$;

\item $R_{S,\Theta,Z}^\pm$ has sides
      $R_{S,\Theta,Z}^\pm \cap T_{S,\Theta,Z}^+$,
      $R_{S,\Theta,Z}^\pm \cap T_{S,\Theta,Z}^-$,
      $
        R_{S,\Theta,Z}^\pm \cap \acirc{1}
        =
        R_{S,\Theta,Z}^+ \cap R_{S,\Theta,Z}^-
      $,
      and $R_{S,\Theta,Z}^\pm \cap P_{S,\Theta,Z}$,
      and vertex angles all of measure $\pi/2$;

\item $T_{S,\Theta,Z}^\pm$ has sides 
      $T_{S,\Theta,Z}^\pm \cap R_{S,\Theta,Z}^+$,
      $T_{S,\Theta,Z}^\pm \cap R_{S,\Theta,Z}^-$,
      and $T_{S,\Theta,Z}^\pm \cap P_{S,\Theta,Z}$,
      the first two meeting on $\acirc{1}$ at angle $\Theta$
      and each of the remaining two vertex angles
      having measure $\pi/2$;

\item $P_{S,\Theta,Z}$ has sides
      $P_{S,\Theta,Z} \cap T_{S,\Theta,Z}^+$,
      $P_{S,\Theta,Z} \cap T_{S,\Theta,Z}^-$,
      $P_{S,\Theta,Z} \cap R_{S,\Theta,Z}^+$,
      and $P_{S,\Theta,Z} \cap R_{S,\Theta,Z}^-$,
      and vertex angles of measure
      $\pi/2 \pm S$; 

\item $
        R_{S,\Theta,Z}^\pm \cap \acirc{1}
        =
        R_{S,\Theta,Z}^+ \cap R_{S,\Theta,Z}^-
      $
      has constant dihedral angle $\Theta$
      and is an arc of length $Z$
      on the great circle $\acirc{1}$;
      
\item each of the four edges
      $R_{S,\Theta,Z}^{{\pm}_1} \cap T_{S,\Theta,Z}^{{\pm}_2}$ 
      has dihedral angle varying monotonically
      between $\pi/2$ and $\pi/2 \pm S$
      and is an arc of length $S$
      on a great circle
      orthogonally intersecting $\acirc{1}$;

\item each of the two edges
      $P_{S,\Theta,Z} \cap R_{S,\Theta,Z}^\pm$
      has constant dihedral angle $\pi/2$
      and is an arc of length $Z$
      on a great circle on $T^S$;
      
\item each of the two edges
      $P_{S,\Theta,Z} \cap T_{S,\Theta,Z}^\pm$
      has constant dihedral angle $\pi/2$
      and is an arc of length
      $\Theta \sin S$,
      simultaneously a circle of latitude
      on $\Sigma_{\pm Z/2}$
      and a circle of principal curvature
      on $T^S$;
    
\item  
$
  \refl_{C^0_0}\Omega_{S,\Theta,Z}
  =
  \Omega_{S,\Theta,Z}
$; and

\item $\Omega_{S,\Theta,Z}$ is contained in the closure
      of a single component of 
      $
        \Sph^3
        \setminus
        (
          \Sigma_{\pi/2}
          \cup
          \Sigma^{\pi/2}
        )
      $.
\end{enumerate}
\end{prop}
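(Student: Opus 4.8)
The plan is to work throughout in the ``toral'' chart
\[
  \Phi(r,\theta,z):=e^{\ii z}(\cos r,e^{\ii\theta}\sin r)=(e^{\ii z}\cos r,\ e^{\ii(z+\theta)}\sin r),
\]
in which $\Omega_{S,\Theta,Z}$ is by definition the image of the box $B:=[0,S]\times[-\Theta/2,\Theta/2]\times[-Z/2,Z/2]$, and to read every assertion off the pulled-back metric. Setting $u=z$ and $v=z+\theta$ identifies $\Phi$ with the standard Hopf-type chart $(\cos r\,e^{\ii u},\sin r\,e^{\ii v})$, of metric $dr^2+\cos^2 r\,du^2+\sin^2 r\,dv^2$, and substituting back gives
\begin{equation*}
  \Phi^* g_{\Sph^3}
  = dr^2+\cos^2 r\,dz^2+\sin^2 r\,(dz+d\theta)^2
  = dr^2+dz^2+2\sin^2 r\,dz\,d\theta+\sin^2 r\,d\theta^2 .
\end{equation*}
Since $\cos r=\abs{w_1}$ recovers $r\in[0,\pi/4]$ and, when $\sin r\neq 0$, the phases of $w_1$ and $w_2$ recover $z$ and $z+\theta$ (hence $\theta$) --- here one uses $S,\ Z/2,\ \Theta/2<\pi/2$ --- the restriction $\Phi|_B$ is injective except that it collapses each segment $\{0\}\times[-\Theta/2,\Theta/2]\times\{z\}$ to the point $(e^{\ii z},0)\in\acirc{1}$. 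Thus $\Omega_{S,\Theta,Z}$ is the box $B$ with the $\theta$-fibers of one face collapsed to points: slicing at fixed $z$ this collapses one edge of a square to a point (a disc), and varying $z$ sweeps out $D^2\times[-Z/2,Z/2]$, so $\Omega_{S,\Theta,Z}$ is a closed $3$-ball and $\partial\Omega_{S,\Theta,Z}$ is the union of the images of the five nondegenerate faces $\{r=S\}$, $\{\theta=\pm\Theta/2\}$, $\{z=\pm Z/2\}$ of $B$, the sixth face $\{r=0\}$ collapsing into the arc on $\acirc{1}$.

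Next I would identify those five faces and their edges and vertices directly. The face $\{\theta=\pm\Theta/2\}$ lies on $\T^{\pm\Theta/2}$ and, by the metric above, carries $dr^2+dz^2$: a flat rectangle with sides $S$ and $Z$ and four right angles (this is $R^\pm$; (ii)). The face $\{z=\pm Z/2\}$ lies on the great sphere $\Sigma_{\pm Z/2}$ and carries $dr^2+\sin^2 r\,d\theta^2$, geodesic polar coordinates about $r=0$: a ``triangle'' with apex angle $\Theta$ (immediate from the polar form) whose two meridians $\{\theta=\pm\Theta/2\}$ meet the latitude arc $\{r=S\}$ at right angles (this is $T^\pm$; (iii)). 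The face $\{r=S\}$ lies on $T^S=\{d^{\Sph^3}_{\acirc{1}}=S\}$ and carries the constant-coefficient flat metric $dz^2+2\sin^2 S\,dz\,d\theta+\sin^2 S\,d\theta^2$: a parallelogram whose interior angles, computed from $g_{z\theta}=\sin^2 S$, $\abs{\partial_z}=1$, $\abs{\partial_\theta}=\sin S$, are $\arccos(\pm\sin S)=\pi/2\mp S$, with sides of lengths $Z$ and $\Theta\sin S$ (this is $P$; (iv)). For the edges: $R^\pm\cap\acirc{1}=\{r=0\}=R^+\cap R^-$ is the arc $\{(e^{\ii z},0):\abs z\le Z/2\}$, of length $Z$ on $\acirc{1}$; each $R^{\pm_1}\cap T^{\pm_2}$ equals $r\mapsto\cos r\,v_1+\sin r\,v_2$ for an orthonormal pair $v_1\in\acirc{1}$, $v_2\perp\acirc{1}$, hence a great-circle arc of length $S$ meeting $\acirc{1}$ orthogonally; $P\cap R^\pm$ is the Hopf-fiber great circle $z\mapsto e^{\ii z}(\cos S,e^{\pm\ii\Theta/2}\sin S)$ of length $Z$ on $T^S$; and $P\cap T^\pm$ is a latitude circle on $\Sigma_{\pm Z/2}$, of length $\Theta\sin S$, which in the $(\theta,z)$ coordinates on $T^S$ is an integral curve of $\partial_\theta$, one of the two principal directions of the CMC torus $T^S$ (the principal directions of each $T^r$ being the two coordinate directions of the Hopf-type chart), hence a line of curvature of $T^S$.

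It remains to treat the dihedral angles. Along $\{r=0\}$ the faces $R^\pm$ share the tangent $\partial_z\Phi=(\ii e^{\ii z},0)$ to $\acirc{1}$, while $\partial_r\Phi|_{r=0,\ \theta=\pm\Theta/2}=(0,e^{\ii(z\pm\Theta/2)})$, so the dihedral angle there is $\arccos\Re e^{\ii\Theta}=\Theta$, independent of $z$ ((v)). For (vi) I would use unit normals: $\Sigma_{\pm Z/2}$ is totally geodesic, with constant unit normal $(\ii e^{\pm\ii Z/2},0)$ along it, while the (minimal) face $R^{\pm_1}$ has unit normal $\pm\ii\,\partial_r\Phi$; along $R^{\pm_1}\cap T^{\pm_2}$ their inner product equals $\pm\sin r$ in absolute value, and since the dihedral angle is continuous in $r$ and equals $\pi/2$ at $r=0$ it must equal $\pi/2\pm r$, hence vary monotonically from $\pi/2$ to $\pi/2\pm S$ as $r$ runs over $[0,S]$. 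The same normals give dihedral angle constantly $\pi/2$ along $P\cap R^\pm$ (normals $\partial_r\Phi$ and $\ii\,\partial_r\Phi$, orthogonal) and along $P\cap T^\pm$ (normals $\partial_r\Phi$ and the constant normal of $\Sigma_{\pm Z/2}$, orthogonal), finishing (v)--(viii). Finally, (ix) and (x) are immediate: $\refl_{C^0_0}$ is complex conjugation $(w_1,w_2)\mapsto(\overline w_1,\overline w_2)$, which in the chart is $(r,\theta,z)\mapsto(r,-\theta,-z)$, under which $B$ is invariant; and on $\Omega_{S,\Theta,Z}$ one has $\Re w_1=\cos r\cos z>0$ and $\Re w_2=\sin r\cos(z+\theta)\ge 0$ (using $r,\ \abs z,\ \abs{z+\theta}<\pi/2$), so $\Omega_{S,\Theta,Z}$ lies in the closure of the single component $\{\Re w_1>0,\ \Re w_2>0\}$ of $\Sph^3\setminus(\Sigma_{\pi/2}\cup\Sigma^{\pi/2})=\Sph^3\setminus\{\Re w_1=0\text{ or }\Re w_2=0\}$.

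I do not expect any genuine obstacle: once $\Phi^* g_{\Sph^3}$ is recorded, every item is a one-line computation or a standard fact about great circles, great spheres, and the CMC tori $T^r$. The one point demanding care is the degeneracy of the chart $\Phi$ along $\acirc{1}$ (where $r=0$), which must be handled honestly in the topological claim --- the face $\{r=0\}$ of $B$ collapses onto an arc, not a disc --- and in describing the edges and vertex angles incident to $\acirc{1}$, where one computes with the ambient frame $\partial_r\Phi,\partial_\theta\Phi,\partial_z\Phi$ rather than with the coordinate metric, which is singular there.
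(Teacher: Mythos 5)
Your verification is correct and complete: the pulled-back metric in the $(r,\theta,z)$ chart, the identification of the five faces with their induced metrics, the normal computations for the dihedral angles, and the careful handling of the chart degeneracy along $\acirc{1}$ all check out, and they cover every item (i)--(x). The paper itself offers no proof of this proposition (the claims are declared simple observations and the proof is omitted), so there is nothing to contrast with; your write-up is exactly the kind of direct coordinate verification the authors leave to the reader.
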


Next we consider the one-parameter group
$\Bigl\{\rot_{C^{\pi/2}_{\pi/2}}^t\Bigr\}_{t \in \R}$
of rotations along $C^0_0$,
and we define on $\Sph^3$ the Killing field $K$
by
\begin{equation}
  K|_{(w_1,w_2)}
  :=
  \left.\frac{d}{dt}\right|_{t=0}
    \rot_{C^{\pi/2}_{\pi/2}}^t
    (w_1,w_2)
  =
  (-\Re w_2, ~ \Re w_1).
\end{equation}
Whenever we refer to an orbit of $K$,
it will always be oriented in the direction of $K$.
We call
$p \in \partial\Omega_{S,\Theta,Z}$
an entry or exit point of $\Omega_{S,\Theta,Z}$ for
$\pm K$
(and say that the $\pm K$ orbit of $p$ enters or exits
$\Omega_{S,\Theta,Z}$ at $p$)
if there exists $\epsilon>0$
such that the set
$\{\rot_{C^{\pi/2}_{\pi/2}}^{\pm t} p\}_{0<t<\epsilon}$
is contained in $\Omega_{S,\Theta,Z}$
or, respectively, its complement.

\begin{prop}[Orbits of $K$ intersecting
  $\Omega_{S,\Theta,Z}$]
\label{orbits}
Let $S \in (0,\pi/4]$, $\Theta \in (0,\pi/2]$,
and $Z \in (0,\Theta/2]$.
\begin{enumerate}[(i)]
  \item \label{entry}
  The set of entry points of
  $\Omega_{S,\Theta,Z}$ for $K$ is
  $
    (R^-_{S,\Theta,Z} \cup R^+_{S,\Theta,Z})
    \setminus
    (
      T^-_{S,\Theta,Z}
      \cup
      P_{S,\theta,Z}
      \cup
      T^+_{S,\Theta,Z}
    )
  $,

  \item \label{exit}
  the set of exit points of $\Omega_{S,\Theta,Z}$
  for $K$ is
  $
    T^-_{S,\Theta,Z}
    \cup
    P_{S,\theta,Z}
    \cup
    T^+_{S,\Theta,Z}
  $,

  \item \label{reverse_entry}
  the set of entry points of $\Omega_{S,\Theta,Z}$
  for $-K$ is
  $
    (
      T^-_{S,\Theta,Z}
      \cup
      P_{S,\theta,Z}
      \cup
      T^+_{S,\Theta,Z}
    )
    \setminus 
    (R^-_{S,\Theta,Z} \cup R^+_{S,\Theta,Z})
  $,

  \item \label{reverse_exit}
  the set of exit points of $\Omega_{S,\Theta,Z}$
  for $-K$ is
  $
    R^-_{S,\Theta,Z} \cup R^+_{S,\Theta,Z}
  $,

  \item \label{unique_exit}
  each orbit of $K$ intersects
  $
    T^-_{S,\Theta,Z}
    \cup
    P_{S,\Theta,Z}
    \cup
    T^+_{S,\Theta,Z} 
  $
  in at most one point,
  and

  \item \label{boundary}
  each orbit of $K$ through
  $
    (R^-_{S,\Theta,Z} \cup R^+_{S,\Theta,Z})
    \cap
    (
      T^-_{S,\Theta,Z}
      \cup
      P_{S,\Theta,Z}
      \cup
      T^+_{S,\Theta,Z} 
    )
  $
  intersects $\Omega_{S,\Theta,Z}$
  at a single point.

\end{enumerate}
\end{prop}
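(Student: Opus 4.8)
The plan is to argue by explicit computation in the coordinates $(r,\theta,z)$ used to define $\Omega_{S,\Theta,Z}$, exploiting that the flow of $K$ is elementary: since $\rot_{C^{\pi/2}_{\pi/2}}^t$ is rotation through angle $t$ in the real $2$-plane spanned by $\Re w_1$ and $\Re w_2$ (fixing the orthogonal plane spanned by $\Im w_1$ and $\Im w_2$), every orbit of $K$ is a great circle along which $\Im w_1$, $\Im w_2$, and $(\Re w_1)^2+(\Re w_2)^2$ are constant while $(\Re w_1,\Re w_2)$ rotates. First I would record the directional derivatives
\begin{equation*}
  K(r)=\cos z\cos(z+\theta),\quad
  K(\arg w_1)=\sin z\cos(z+\theta)\tan r,\quad
  K(\arg w_2)=-\cos z\sin(z+\theta)\cot r,
\end{equation*}
noting that on $\Omega_{S,\Theta,Z}$ one has $|z|\le Z/2$ and $|z+\theta|\le(Z+\Theta)/2<\pi/2$, so $\cos z>0$ and $\cos(z+\theta)>0$ everywhere. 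Hence $K(r)>0$ throughout $\Omega_{S,\Theta,Z}$, so $r$ strictly increases along any orbit arc in $\Omega_{S,\Theta,Z}$, and $K(\arg w_1)$ has the sign of $z$, so $|z|$ increases along such arcs. A slightly finer estimate --- using $r\le S\le\pi/4$, hence $\cos^2 r\ge\sin^2 r$, together with $\Theta/2\ge Z\ge 2|z|$ on the faces $\theta=\pm\Theta/2$ (this is where the hypothesis $Z\le\Theta/2$, stronger than in Proposition \ref{prism_geometry}, is used) --- shows that on the relative interiors of $R^\pm_{S,\Theta,Z}$ the field $K$ points strictly into $\Omega_{S,\Theta,Z}$.

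Granting these sign facts, parts (\ref{entry})--(\ref{reverse_exit}) follow by examining each face, edge, and vertex of $\partial\Omega_{S,\Theta,Z}$ (as enumerated in Proposition \ref{prism_geometry}) and comparing $K$ with the outward conormal: on the relative interiors of $T^\pm_{S,\Theta,Z}$ and of $P_{S,\Theta,Z}$, the monotonicity of $\arg w_1$ (respectively of $r$) forces $K$ out of $\Omega_{S,\Theta,Z}$, so these are exit points for $K$; on the relative interiors of $R^\pm_{S,\Theta,Z}$, $K$ points in; on the edge $R^\pm_{S,\Theta,Z}\cap\acirc{1}$ one checks directly from the explicit orbit through a point of $\acirc{1}$ that the forward orbit enters and the backward orbit leaves (the latter through the constraint on $\theta$); and on the shared edges $R^\pm\cap T^\pm$, $R^\pm\cap P$, $T^\pm\cap P$ and at the vertices, whichever of the competing constraints is violated first decides the classification, in every case compatibly with the set differences exactly as written. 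Passing between $K$ and $-K$ and applying the reflection $\refl_{C^0_0}$ --- which preserves $\Omega_{S,\Theta,Z}$ by Proposition \ref{prism_geometry}(ix), acts by $(z,\theta)\mapsto(-z,-\theta)$, and commutes with the flow of $K$ --- roughly halves this casework.

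For (\ref{unique_exit}) and (\ref{boundary}) I would first observe that, because $K(r)>0$ on $\Omega_{S,\Theta,Z}$, an orbit meets $\Omega_{S,\Theta,Z}$ only where its rotation parameter $\phi$ --- defined by $(\Re w_1,\Re w_2)=\rho(\cos\phi,\sin\phi)$ --- satisfies $\sin 2\phi>0$; since also $\Re w_1>0$ on $\Omega_{S,\Theta,Z}$, the whole intersection of the orbit with $\Omega_{S,\Theta,Z}$ lies in a single quarter-circle $\phi\in(0,\pi/2)$, on which $r$ is strictly increasing and $\arg w_1$, $\arg w_2$ are each monotone. Using $\refl_{C^0_0}$ I may assume $\Im w_1\ge 0$, so that $\arg w_1$ is nondecreasing on the quarter-circle; if $\Im w_2\ge 0$ as well then $\arg w_2$ is nonincreasing and $\theta=\arg w_2-\arg w_1$ is monotone, while if $\Im w_2<0$ then $\theta<0$ throughout the quarter-circle, and I would show $\theta$ is unimodal there: from $K(\theta)\sin r\cos r=-\bigl(\cos z\cos^2 r\,\sin(z+\theta)+\sin z\sin^2 r\,\cos(z+\theta)\bigr)$ the sign of $K(\theta)$ equals that of $-\sin\bigl(z+\theta+\arctan(\tan z\tan^2 r)\bigr)$, whose phase is monotone in $\phi$, so $K(\theta)$ changes sign at most once. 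In every case each of $\{r\le S\}$, $\{|z|\le Z/2\}$ and $\{|\theta|\le\Theta/2\}$ meets the quarter-circle in an interval, so the orbit meets $\Omega_{S,\Theta,Z}$ in a single arc; since $r$ strictly increases on it, its forward endpoint is the unique exit point and lies in $T^-_{S,\Theta,Z}\cup P_{S,\Theta,Z}\cup T^+_{S,\Theta,Z}$, which gives (\ref{unique_exit}), and if the orbit meets $(R^-_{S,\Theta,Z}\cup R^+_{S,\Theta,Z})\cap(T^-_{S,\Theta,Z}\cup P_{S,\Theta,Z}\cup T^+_{S,\Theta,Z})$ the arc degenerates to a point, giving (\ref{boundary}). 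The chief technical obstacle I anticipate is precisely this connectedness argument, together with the careful treatment of the degenerate orbits --- those tangent to $\acirc{1}$, and those meeting $\partial\Omega_{S,\Theta,Z}$ only at a vertex or only at the bottom of the quarter-circle --- so that the boundary descriptions hold with exactly the stated set differences rather than merely up to lower-dimensional sets.
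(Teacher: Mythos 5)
Your proposal is sound and, for items (\ref{entry})--(\ref{reverse_exit}), it is essentially the paper's own argument in prism coordinates: your directional derivatives $K(r)$, $K(\arg w_1)$, $K(\theta)$ are, up to positive factors, exactly the normal components $K\cdot\nu$ that the paper computes face by face in $\R^4$ (your expression for $K(\theta)\sin r\cos r$ on $\theta=\pm\Theta/2$ is the paper's $-\tfrac12\bigl(\sin\tfrac{\Theta}{2}\cos 2r+\sin(\tfrac{\Theta}{2}\pm 2z)\bigr)$ in disguise), and both use $Z\le\Theta/2$ at the same spot. Where you genuinely differ is (\ref{unique_exit})--(\ref{boundary}). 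The paper starts from a point of $T^-\cup P\cup T^+$, counts returns of its orbit to the sphere $\Sigma_{\pm Z/2}$ (at most two per period) or to the torus $T^S$ (four, one per component of $\Sph^3\setminus(\Sigma_{\pi/2}\cup\Sigma^{\pi/2})$), and combines the confinement of $\Omega_{S,\Theta,Z}$ to one such component with the period $2\pi$ to get a return-time bound $\ge 3\pi/2$, applied from both putative intersection points. You use the same confinement (your quarter-circle in $\phi$ is exactly that component) but instead show that each of $\{r\le S\}$, $\{|z|\le Z/2\}$, $\{|\theta|\le\Theta/2\}$ cuts the orbit in an interval, via monotonicity of $r$ and $|z|$ and monotonicity or unimodality of $\theta$ through your phase formula, so that the orbit meets $\Omega_{S,\Theta,Z}$ in a single arc along which $r$ strictly increases. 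This yields a sharper structural picture (a graph over the quarter-circle) at the cost of the $\theta$-unimodality analysis, which the paper's counting argument avoids; both routes work.

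One point you must still supply for the statement as written: in the borderline case $S=\pi/4$, $Z=\Theta/2$ the first-order inequality on $R^\pm$ is only non-strict, degenerating exactly at the two corners $R^\pm\cap T^\mp\cap P$, and claims (\ref{reverse_entry}), (\ref{reverse_exit}), and (\ref{boundary}) there require a second-order argument; the paper computes $\tfrac{d^2}{dt^2}f_\pm(\alpha_\pm(t))$ against the Clifford torus $\T^{\pm\Theta/2}$ to see that these corners are still exit points for $-K$. Your caveat about degenerate orbits covers this in spirit, and your own machinery can deliver it --- along the orbit through such a corner the phase $\arg w_2+\arctan(\tan z\tan^2 r)$ vanishes at the corner and is strictly monotone, so $\theta$ exceeds $\Theta/2$ immediately under the backward flow --- but as written it is asserted rather than proved. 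The same remark applies, more mildly, to the vertices $T^\pm\cap R^+\cap R^-$ on $\acirc{1}$, where your explicit-orbit observation ($x_2$ constant, $x_1$ strictly decreasing in $\abs{t}$, hence $\abs{z}>Z/2$ for $t\neq 0$) replaces, correctly and a bit more simply, the paper's curvature comparison with the totally geodesic $\Sigma_{\pm Z/2}$.
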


\begin{proof}
We prefer to present the proof in $\R^4$
rather than $\C^2$
(under the identification
$
  (x_1,x_2,x_3,x_4)
  \mapsto
  (x_1+ix_2,x_3+ix_4)
$),
and we choose to omit the parameters $S,\Theta,Z$
from the notation for $\Omega$ and each face.
We parametrize these faces as follows:
\begin{equation}
\begin{gathered}
  T^{\pm}
    :=
    \left\{
      \left(
        \cos \frac{Z}{2} \cos r, ~
        \pm \sin \frac{Z}{2} \sin r, ~
        \cos \theta \sin r, ~
        \sin \theta \sin r
      \right) 
    \right\}_{
      r \in [0,S],
      \;
      \theta \in
        \left[
          -\frac{\Theta}{2},\frac{\Theta}{2}
        \right],
       },
  \\
  P
    :=
    \left\{
      (
        \cos S \cos z, ~
        \cos S \sin z, ~
        \sin S \cos (z+\theta), ~
        \sin S \sin(z+\theta)
      )
    \right\}_{
        \theta \in
        \left[
          -\frac{\Theta}{2}, \frac{\Theta}{2}
        \right],
        \;
        z \in
          \left[
            -\frac{Z}{2}, \frac{Z}{2}
          \right]
    },
  \\
  R^{\pm}
    :=
    \left\{
      \left(
        \cos r \cos z, ~
        \cos r \sin z, ~
        \sin r
          \cos \left(z \pm \frac{\Theta}{2}\right), ~
        \sin r 
          \sin \left(z \pm \frac{\Theta}{2}\right)
      \right)
    \right\}_{
      r \in [0,S],
      \;
      z \in
        \left[
          -\frac{Z}{2}, \frac{Z}{2}
        \right]
    }.
\end{gathered}
\end{equation}

On each face we choose
the continuous unit normal
which agrees, on the interior of the face,
with the outward normal to $\Omega$:
\begin{equation}
\begin{gathered}
  \nu^{T^\pm}
    :=
    \left(
      -\sin \frac{Z}{2}, ~
      \pm \cos \frac{Z}{2}, ~
      0, ~
      0
    \right),
  \\
  \nu^P
    :=
    (
      -\sin S \cos z, ~
      -\sin S \sin z, ~
      \cos S \cos (z+\theta), ~
      \cos S \sin (z+\theta)
    ),
  \\
  \nu^{R^\pm}
    :=
    \left(
      \pm \sin r \sin z, ~
      \mp \sin r \cos z, ~
      \mp \cos r
        \sin \left(z \pm \frac{\Theta}{2}\right), ~
      \pm \cos r
        \cos \left(z \pm \frac{\Theta}{2}\right)
    \right).
\end{gathered}
\end{equation}

On the other hand,
(making implicit use of the above parametrizations)
\begin{equation}
\begin{gathered}
  K|_{T^\pm}
    =
    \left(
      -\cos \theta \sin r, ~
      0, ~
      \cos \frac{Z}{2} \cos r, ~
      0
    \right),
  \\
  K|_P
    =
    \left(
      -\sin S \cos (z+\theta), ~
      0, ~
      \cos S \cos z, ~
      0
    \right),
  \\
  K|_{R^\pm}
    =
    \left(
      -\sin r
        \cos \left(z \pm \frac{\Theta}{2}\right), ~
      0, ~
      \cos r \cos z, ~
      0
    \right).
\end{gathered}
\end{equation}

Recalling that
$0 \leq r \leq S \leq \pi/4$,
$\abs{\theta} \leq \Theta/2 \leq \pi/4$,
and
$\abs{z} \leq Z/2 \leq \Theta/4$,
we then compute
\begin{equation}
\label{exit_through_triangles}
  K|_{T^\pm} \cdot \nu^{T^\pm}
  =
  \sin \frac{Z}{2} \sin r \cos \theta
  \geq
  0,
  \quad
  \text{with equality iff $r=0$},
\end{equation}
\begin{equation}
\label{exit_through_parallelogram}
  K|_P \cdot \nu^P
    =
    \cos z \cos (z+\theta)
    >
    0,
\end{equation}
\begin{equation}
\begin{aligned}
  K|_{R^\pm} \cdot \nu^{R^\pm}
    &=
    -
    \left(
      \cos^2 r \sin \frac{\Theta}{2}
      \pm \sin z
          \cos \left(z \pm \frac{\Theta}{2}\right)
    \right)
    \\
    &=
    -\frac{1}{2}
    \left(
      \sin \frac{\Theta}{2} \cos 2r
      +\sin
         \left(\frac{\Theta}{2} \pm 2z\right)
    \right)
    \\
    &\leq 0,
    \quad
    \text{with equality iff 
      $r=S=\frac{\pi}{4}$
      and
      $z=\frac{\mp Z}{2}=\frac{\mp \Theta}{4}$
    }.
  \end{aligned}
\end{equation}

The above inequalities establish items
\ref{entry}, \ref{exit},
\ref{reverse_entry},
and \ref{reverse_exit}
except at the two points
\begin{equation}
  T^\pm \cap R^+ \cap R^-
  =
  (\pm Z/2,0,0,0)
\end{equation}
and, in the case that
$S=\frac{\pi}{4}$
and
$Z=\frac{\Theta}{2}$,
at the two additional points
\begin{equation}
\label{RTP}
  R^\pm \cap T^\mp \cap P
  =
  \frac{1}{\sqrt{2}}
  \left(
    \cos \frac{\Theta}{4}, ~
    \mp \sin \frac{\Theta}{4}, ~
    \cos \frac{\Theta}{4}, ~
    \pm \sin \frac{\Theta}{4}
  \right).
\end{equation}
The orbit of $T^\pm \cap R^+ \cap R^-$
is, according to the above calculation,
tangential there to $\Sigma_{\pm Z/2}$
but is easily seen
(since $0<z<\pi/2$)
to have, at the same point, vector-valued curvature
directed along $\acirc{1}$ away from $(1,0,0,0)$,
while $\Sigma_{\pm Z/2}$ is totally geodesic,
confirming that $T^\pm \cap R^+ \cap R^-$
is an exit point for both $K$ and $-K$.
In fact this completes the proof
of \ref{entry} and \ref{exit}.

In the case that $S=\frac{\pi}{4}$
and $Z=\frac{\Theta}{2}$
the orbit through
$R^\pm \cap T^{\mp} \cap P$
is parametrized by
\begin{equation}
  \alpha_\pm(t)
  =
  \frac{1}{\sqrt{2}}
  \left(
    (\cos t-\sin t) \cos \frac{\Theta}{4}, ~
    \mp \sin \frac{\Theta}{4}, ~
    (\cos t + \sin t)\cos \frac{\Theta}{4}, ~
    \pm \sin \frac{\Theta}{4} 
  \right),
\end{equation}
while
$\T^{\pm \Theta/2}$ is the intersection with $\Sph^3$
of the null set of
\begin{equation*}
  f_\pm(x_1,x_2,x_3,x_4)
  :=
  (x_2x_3 - x_1 x_4) \cos \frac{\Theta}{2}
    \pm (x_1x_3 + x_2x_4) \sin \frac{\Theta}{2};
\end{equation*}
then
\begin{equation}
  f_\pm(\alpha_\pm(t))
  =
  \pm \frac{1}{2}\left(\sin \frac{\Theta}{2}\right)
  \left(
    \cos^2 \frac{\Theta}{4} \cos 2t
    -\cos \frac{\Theta}{2} \cos t
    -\sin^2 \frac{\Theta}{4}
  \right),
\end{equation}
and so
\begin{equation}
  \mp 2\left.\frac{d^2}{dt^2}\right|_{t=0}f_\pm(\alpha_\pm(t))
  =
  \left(\sin \frac{\Theta}{2}\right)
    \left(
      1+2\cos^2 \frac{\Theta}{4}  
    \right)
  >
  0,
\end{equation}
but
$
 \pm f_\pm|_\Omega
 \geq 0
$,
completing the proof of claims
\ref{reverse_entry} and \ref{reverse_exit}.

We turn to claim \ref{unique_exit}.
Using the fact that the orbit of
$(x_1,x_2,x_3,x_4)$ can be parametrized by
\begin{equation}
\label{orbit}
  \beta(t)
  :=
    (
      x_1 \cos t - x_3 \sin t, ~
      x_2, ~
      x_1 \sin t + x_3 \cos t, ~
      x_4
    ),
\end{equation}
we observe that if
$(x_1,x_2,x_3,x_4) \in \Sigma^{\pm Z/2}$,
then
$x_2 = \pm x_1 \tan \frac{Z}{2}$
and $\beta(t) \in \Sigma^{\pm Z/2}$
when
\begin{equation}
  x_1 \cos t - x_3 \sin t
  =
  x_1,
\end{equation}
which has two solutions (modulo $2\pi\Z$)
unless $x_3=0$, in which case only one solution.
The computation \ref{exit_through_triangles}
and the analysis in the paragraph
immediately following \ref{RTP}
reveal that
at $t=0$
the orbit $\beta(t)$
not only exits $\Omega$
but also enters the component
$\Sph^3 \setminus \Sigma^{\pm Z/2}$
disjoint from $\Omega$.
Thus the least strictly positive $t$,
at which
$\beta(t) \in T^+ \cup P \cup T^-$,
which we will call $t'$,
is at least the least positive $t$
satisfying \ref{orbit}.
Since $x_1,x_3 \geq 0$
(simply because $(x_1,x_2,x_3,x_4) \in \Omega$),
this last value is strictly greater than $\pi$.
On the other hand,
the order-$4$ subgroup
$
  \left\langle
    \rot_{C^{\pi/2}_{\pi/2}}^{\pi/2}
  \right\rangle
$
acts simply transitively on the components
of
$\Sph^3 \setminus (\Sigma_{\pi/2} \cup \Sigma^{\pi/2})$,
a single one of which contains the interior of $\Omega$.
It follows that $t' \geq 3\pi/2$.

Now
suppose that $(x_1,x_2,x_3,x_4) \in P$.
Then
$x_1^2+x_2^2 = \cos^2 S$,
and $\beta(t) \in T^S$ when
\begin{equation}
  (\sin t)
  \left[
    (x_3^2-x_1^2) \sin t
    - 2x_1x_3 \cos t
  \right]
  =
  0,
\end{equation}
which
(under the assumption $(x_1,x_2,x_3,x_4) \in P$)
has four solutions (modulo $2\pi\Z$).
Since
both
$\refl_{\Sigma_{\pi/2}}$
and $\refl_{\Sigma^{\pi/2}}$
are symmetries of
both $T^S$ and $C^0_0$,
each component of
$\Sph^3 \setminus (\Sigma_{\pi/2} \cup \Sigma^{\pi/2})$
contains a solution.
On the other hand,
\ref{exit_through_parallelogram}
shows that at $t=0$
the orbit $\beta$ exits
the component of $\Sph^3 \setminus T^S$
containing $\Omega$.
It follows that the least strictly positive $t$
for which
$\beta(t) \in T^- \cup P \cup T^+$
is at least $3\pi/2$.
Since each nontrivial orbit of $K$
has period $2\pi$,
the conclusions of this and the preceding paragraph
show that intersection of any orbit of $K$
with $T^- \cup P \cup T^+$
has cardinality at most one,
confirming \ref{unique_exit}.

Finally, let $O$ be an orbit through a point
$p \in (R^- \cup R^+) \cap (T^- \cup P \cup T^+)$.
In particular, by \ref{exit},
$p$ is an exit point for $K$.
By \ref{unique_exit},
if $O \cap \Omega \neq \{p\}$,
then there is also an entry point for $K$
on $O$,
and then $p$ would also have to an entry point
for $-K$,
violating \ref{reverse_exit}
and so confirming \ref{boundary}.
\end{proof}

Now we can prove uniqueness
and $K$ graphicality
(as defined in the statement below)
of the solution
to a certain Plateau problem in $\Omega_{S,\Theta,Z}$.

\begin{thm}[Uniqueness and graphicality 
  for the Plateau problem]
\label{unique_disc}
Let $S \in (0,\pi/4]$, $\Theta \in (0,\pi/2]$,
and $Z \in (0,\Theta/2]$.
Then there is a unique connected minimal surface
(with boundary) $D$,
an embedded disc,
such that $D \subset \Omega_{S,\Theta,Z}$
and
\begin{equation*}
\begin{aligned}
  \partial D
  =
  (R^-_{S,\Theta,Z} \cup R^+_{S,\Theta,Z})
    \cap
    (
      T^-_{S,\Theta,Z}
      \cup
      P_{S,\Theta,Z}
      \cup
      T^+_{S,\Theta,Z} 
    ),
  \end{aligned}
\end{equation*}
the hexagon which is the union
of the edges of $\Omega_{S,\Theta,Z}$
except
$R_{S,\Theta,Z}^- \cap R_{S,\Theta,Z}^+$,
$P_{S,\Theta,Z} \cap T_{S,\Theta,Z}^+$,
and $P_{S,\Theta,Z} \cap T_{S,\Theta,Z}^-$,
Furthermore, $D$ is graphical with respect to $K$
in the sense that no orbit of $K$ intersects $D$
in more than one point.
\end{thm}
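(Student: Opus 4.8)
The plan is to establish, in turn, existence, $K$-graphicality, and uniqueness, adapting the treatment of the Lawson surfaces in \cite{KWlchar}. Write $\Omega:=\Omega_{S,\Theta,Z}$, let $\mathcal E:=T^-_{S,\Theta,Z}\cup P_{S,\Theta,Z}\cup T^+_{S,\Theta,Z}$ denote the ``exit face'', let $\Gamma$ be the prescribed hexagon --- which by Proposition \ref{prism_geometry} is exactly the boundary circle $\partial\mathcal E$ of the topological disc $\mathcal E$ --- and let $\phi_t:=\rot^t_{C^{\pi/2}_{\pi/2}}$ be the flow of $K$, of period $2\pi$. Two facts extracted from Proposition \ref{orbits} will carry the argument: (a) the $K$-orbit of each point of $\Gamma$ meets $\overline\Omega$ in that single point (items \ref{exit} and \ref{boundary}); and (b) every $K$-orbit meeting $\overline\Omega$ meets $\mathcal E$ in exactly one point (items \ref{exit} and \ref{unique_exit}), so the $K$-orbit space of $\overline\Omega$ is naturally identified with $\mathcal E$, the projection $\pi$ restricting to the identity on $\Gamma$. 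I will also use that in the $\R^4$ model of the proof of Proposition \ref{orbits} every point of $\Omega$ has positive first coordinate (each face is there parametrized that way), whereas $\phi_\pi$ is the map $(x_1,x_2,x_3,x_4)\mapsto(-x_1,x_2,-x_3,x_4)$; hence $\phi_\pi(\Omega)\cap\Omega=\emptyset$, and more generally $\phi_t$ has no fixed point in $\Omega$ for $t\in(0,2\pi)$ (its fixed-point set $C^{\pi/2}_{\pi/2}$ being disjoint from $\Omega$).

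For existence, observe that the faces $R^\pm_{S,\Theta,Z}$ lie on Clifford tori and $T^\pm_{S,\Theta,Z}$ on great spheres, so are minimal, while $P_{S,\Theta,Z}$ lies on the constant-mean-curvature torus $T^S$, whose mean-curvature vector points toward $\acirc{1}$, hence into $\Omega$, because $S\le\pi/4$; thus $\partial\Omega$ is mean-convex. Since $\Gamma$ is a Jordan curve on the mean-convex sphere $\partial\Omega$ bounding the disc $\mathcal E$ there, the Meeks--Yau solution of the Plateau problem supplies an embedded minimal disc $D\subset\overline\Omega$ with $\partial D=\Gamma$ (the edges and vertices of $\Omega$ being handled in the customary way). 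Moreover any embedded minimal disc $D\subset\Omega$ with $\partial D=\Gamma$ --- the one just produced, or a competitor in the uniqueness assertion --- satisfies $\operatorname{int}D\subset\operatorname{int}\Omega$: an interior point of $D$ on the interior of a face would, by the maximum principle (the faces being minimal, or mean-convex with $\Omega$ on the concave side), force $D$ into that face, which is impossible since $\Gamma$ lies in no single face. Finally, as $\phi_t$ is fixed-point free on $\Omega$ for $t\in(0,2\pi)$, such a $D$ is $K$-graphical if and only if $\phi_t(D)\cap D=\emptyset$ for every $t\in(0,2\pi)$.

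The core is a dichotomy argument run either with a single candidate disc $D$ (toward $K$-graphicality) or with $\phi_t(D_1)$ in place of $\phi_t(D)$ and $D_2$ in place of $D$, for two candidate discs (toward uniqueness). Suppose the two surfaces in question meet for some parameter $t_0\in(0,2\pi)$; let $\mathcal I\subset(0,2\pi)$ be the (closed, since the discs are compact) set of $t$ for which they meet, and note $\pi\notin\mathcal I$ since $\phi_\pi(\Omega)\cap\Omega=\emptyset$. Replacing $t_0$ by $2\pi-t_0$ if needed, assume $t_0<\pi$; then $\bar t:=\sup(\mathcal I\cap[t_0,\pi])\in[t_0,\pi)$ (by closedness of $\mathcal I$), the surfaces meet at parameter $\bar t$, and are disjoint for every $t\in(\bar t,\pi]$. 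A standard first-contact argument now produces a point $x$ in the intersection at parameter $\bar t$ at which one surface lies locally on one side of the other; by (a), $x$ cannot lie on $\partial D$ or on $\phi_{\bar t}(\partial D)$ --- either would, via $\phi_{\pm\bar t}$, place a second point of the orbit of a point of $\Gamma$ inside $\overline\Omega$ --- so $x$ is interior to both. The strong maximum principle and unique continuation for (real-analytic) minimal surfaces then force the two surfaces to coincide, hence their boundaries, so $\phi_{\bar t}(\Gamma)=\Gamma$, again contradicting (a). Thus $\mathcal I=\emptyset$: every candidate disc is $K$-graphical, and $\phi_t(D_1)\cap D_2=\emptyset$ for all $t\in(0,2\pi)$ for any two candidate discs. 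For uniqueness it remains to note, using (b), that $\pi$ restricts on each $D_i$ to a continuous map onto $\mathcal E$ equal to the identity on $\Gamma=\partial D_i$, hence surjective (degree one on the boundary) and, by $K$-graphicality, injective, so a homeomorphism $D_i\to\mathcal E$; therefore each $K$-orbit meeting $\operatorname{int}\Omega$ meets each $D_i$ in a single point $p_i$, and if $p_1\ne p_2$ then $p_2=\phi_t(p_1)$ for some $t\in(0,2\pi)$, contradicting $\phi_t(D_1)\cap D_2=\emptyset$. So $D_1$ and $D_2$ agree on every such orbit, and having the same boundary $\Gamma$ they coincide.

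The step I expect to be the main obstacle is this first-contact/maximum-principle argument, and specifically the verification that the contact point is interior to both discs --- which rests squarely on property (a) of Proposition \ref{orbits} --- together with the regularity of $D$ up to $\Gamma$ away from the six vertices needed to invoke the maximum principle there. This is the direct analogue, for desingularizations of pairwise-orthogonal Clifford tori rather than of intersecting spheres, of the argument carried out for the Lawson surfaces in \cite{KWlchar} (cf.\ also \cite{KWlindex}), and I would follow that treatment; the applicability of the Meeks--Yau embedded Plateau solution to the non-smooth domain $\overline\Omega$ is a more routine secondary matter.
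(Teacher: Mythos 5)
Your proposal is correct and takes essentially the same approach as the paper: existence via Meeks--Yau, then a sweep of one surface by the flow of $K$ in which first contact is excluded along the boundary by Proposition \ref{orbits}.\ref{boundary} and the strong maximum principle yields a contradiction, with graphicality as the special case of a single disc and uniqueness from the fact that every orbit of $K$ meeting $\Omega_{S,\Theta,Z}$ must meet the disc. The only cosmetic differences are that the paper excludes the troublesome rotation parameters via Proposition \ref{prism_geometry}(x) rather than your explicit positivity of the first coordinate, and obtains the final uniqueness step from the separation of the $3$-ball by the embedded disc together with items \ref{entry} and \ref{exit} of Proposition \ref{orbits}, rather than your degree argument for the orbit projection onto the exit face.
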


\begin{proof}
By \cite{MeeksYauEmbedded}*{Theorem 1}
there exists an embedded minimal disc
$D \subset \Omega_{S,\Theta,Z}$
with the prescribed boundary.
Now suppose $M$ is a connected minimal surface
in $\Omega_{S,\Theta,Z}$
with $\partial M = \partial D$.
We will show, by contradiction,
that no orbit of $K$ intersects
$D$ and $M$ at distinct points.
To this end suppose then that such an orbit does exist.
This assumption,
together with the fact that
$\Omega_{S,\Theta,Z}$
lies in the closure of a single component
of $\Sph^3 \setminus (\Sigma_{\pi/2} \cup \Sigma^{\pi/2})$,
implies the existence
of
$t' \in \R \setminus 2\pi\Z$ and $\epsilon>0$
such that
the intersection $D \cap \rot_{C^{\pi/2}_{\pi/2}}^t M$
is nonempty for $t=t'$
but empty either for $t \in (t',t'+\epsilon)$
or $t \in (t'-\epsilon, t')$.
Item \ref{boundary} of \ref{orbits}
ensures that the intersection, when $t=t'$,
cannot include any points in the boundary
of either surface ($D$ or the rotated $M$).
The maximum principle then forces
the surfaces to coincide,
but since $t' \not\in 2\pi\Z$, their boundaries,
again by \ref{orbits}.\ref{boundary},
are actually disjoint,
which contradiction establishes our claim
that no orbit of $K$
intersects $D$ and $M$ at distinct points.

The graphicality claim is simply
the special case of this last fact when $M=D$.
To prove the uniqueness,
note that it follows from the embeddedness of $D$,
the fact that $\Omega_{S,\Theta,Z}$ is a $3$-ball,
and items \ref{entry} and \ref{exit}
of \ref{orbits}
that each orbit of $K$ intersecting $\Omega_{S,\Theta,Z}$
also intersects $D$,
and so, if $M \neq D$,
there would exist an orbit of $K$
intersecting $M$ and $D$ at distinct points,
which contradiction completes the proof.
\end{proof}

\subsection*{Proof of Theorem \ref{odd_cs_uniqueness}}
Now Theorem \ref{odd_cs_uniqueness}
is obtained as a corollary
of Theorem \ref{unique_disc}
as follows.
Choe and Soret construct $T^o_{k,2km}$
(in the proof of \cite{ChoeSoretTordesing}*{Theorem 1})
by (i) solving the Plateau problem,
for an embedded disc---called $H$
in \cite{ChoeSoretTordesing}---in a
prism---$U_1^1$ in the notation
of \cite{ChoeSoretTordesing}---congruent to
$\Omega_{\pi/4,\pi/k,\pi/2km}$
with prescribed boundary a geodesic hexagon
as in Theorem \ref{unique_disc} (shifted by the congruence)
and then (ii) checking that $H$
can be extended to a closed embedded surface
by applying the group---$G^o$ in
\cite{ChoeSoretTordesing}---generated
by the reflections through the great circles
containing the sides of the hexagon.
Theorem \ref{unique_disc}
establishes that
the solution to the preceding Plateau problem
is unique,
and so the surface $T^o_{k,2km}$
is uniquely determined by its construction.

Similarly,
from the reflection principle
it is clear that any closed embedded minimal surface
whose intersection with $U_1^1$
is nonempty and has boundary $\partial H$
is uniquely determined by its intersection with $U_1^1$,
but Theorem \ref{unique_disc}
implies that this intersection is $H$,
and so the surface is $T^o_{k,2km}$.

Finally,
to see that $T^o_{k,2km}$ agrees
with the minimal surface constructed
in \cite{KWtordesing}*{Theorem 7.1}
from the initial surface
$M(k,2m,1,1,0)$
(\cite{KWtordesing}*{Definition 4.13})
under the assumption that $m$
is large enough to ensure the existence
of the latter surface,
it suffices to confirm that,
modulo congruence,
the intersection of the latter surface
with $U_1^1$
is nontrivial and has boundary $\partial H$.
For this of course it is necessary
to refer to some of the details of \cite{KWtordesing},
but we explain here the key points to check.

In \cite{KWtordesing}
the initial surface $M(k,2m,1,1,0)$
is constructed explicitly
and then graphically perturbed
to minimality
by solving the corresponding
nonlinear partial differential equation,
first by constructing a suitable right-inverse
to the linearized operator
and then by appealing to the Schauder fixed point theorem
to solve the nonlinear problem.
The explicit construction
(see \cite{KWtordesing}*{4.14}
and the supporting definitions)
of the initial surface $M(k,2m,1,1,0)$
makes it clear that $M(k,2m,1,1,0)$
(modulo congruence)
not only has the desired properties that
its intersection with $U_1^1$
is nontrivial and has boundary $\partial H$
but moreover is invariant under $G^o$.

In \cite{KWtordesing}
we enforce a proper subgroup of $G^o$,
but it is easy to see,
given the $G^o$ invariance of this initial surface,
that in this case
the larger group $G^o$ could be enforced.
(In fact the right-inverse
to the linearized problem constructed in
\cite{KWtordesing}*{Section 6}
respects $G^o$, without modification.)
Although in \cite{KWtordesing}
we did not prove uniqueness
of the minimal perturbations
of the initial surfaces considered there,
one could in this case
simply appeal to the inverse function theorem
in place of the Schauder fixed point theorem,
and thereby establish uniqueness,
assuming $m$ sufficiently large.
Thus we conclude that the minimal surface
obtained from $M(k,2m,1,1,0)$
is invariant under $G^o$,
but since it is also a small perturbation
of a surface whose intersection
with $U_1^1$ is nonempty
and has boundary $H$,
its own intersection with $U_1^1$
will have these same properties,
completing the proof.
\qed

\section{Desingularizations of three Clifford tori
intersecting pairwise orthogonally}
\label{sec:three_tori}

\subsection*{Initial configuration}
Fix a Clifford torus $\T_2$ containing $\acirc{1}$.
There is a unique Clifford torus
$
  \T_3
  =
  \rot_{\acirc{1}}^{\pi/2}\T_2
  =
  \rot_{\acirc{1}}^{-\pi/2}\T_2
$
intersecting $\T_2$ orthogonally along $\acirc{1}$.
Set $\T_1:=\{d^{\Sph^3}_{\acirc{1}}=\pi/4\}$,
another Clifford torus.
More concretely, we will take
\begin{equation}
  \T_2
    :=
    \{(e^{\ii s} \cos r, e^{\ii s} \sin r)\}_{r,s \in \R}
    =
    \phopf^{-1}(\{y=0\}),
\end{equation}
and so
\begin{equation}
\begin{gathered}
  \T_1
    =
    \{2^{-1/2}(e^{\ii s}, e^{\ii t})\}_{s,t \in \R}
    =
    \phopf^{-1}(\{z=0\}),
  \\
  \T_3
    =
    \{e^{\ii s}(\cos r, \ii \sin r)\}_{r,s \in \R}
    =
    \phopf^{-1}(\{x=0\}).
\end{gathered}
\end{equation}

The following claims are immediate consequences
of the foregoing definitions.

\begin{prop}[Collection $\acol$]
\hspace{.1in}

\begin{enumerate}[(i)]
\item The set $\bigcup_{i \neq j} (\T_i \cap \T_j)$
consists of six pairwise disjoint great circles,
the collection of which we call $\acol$.

\item Each of the three $\T_i$
coincides with $\{d^{\Sph^3}_C=\pi/4\}=\{d^{\Sph^3}_{C^\perp}=\pi/4\}$
for precisely two choices of $C \in \acol$,
along each of which the other two tori
intersect orthogonally.

\item For any $C,C' \in \acol$
the function $d^{\Sph^3}_C$ is constant on $C'$. 

\item Each $C \in \acol$ has four nearest neighbors
(excluding $C$ itself) in $\acol$,
each at distance $\pi/4$,
namely
$(\bigcup \acol) \setminus (C \cup C^\perp)$.
\end{enumerate}
\end{prop}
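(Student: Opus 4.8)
The plan is to verify each of the four assertions directly from the explicit parametrizations of $\T_1,\T_2,\T_3$ and the relation to the Hopf projection $\phopf$, reducing the three-dimensional statements to elementary facts about three mutually orthogonal great circles on $\Sph^2(1/2)$. First I would compute the pairwise intersections. Since $\T_2=\phopf^{-1}(\{y=0\})$, $\T_1=\phopf^{-1}(\{z=0\})$, and $\T_3=\phopf^{-1}(\{x=0\})$, the intersection $\T_i\cap\T_j$ is the $\phopf$-preimage of a pair of antipodal points on $\Sph^2(1/2)$ (the intersection of two of the three coordinate great circles), hence a single Hopf fiber, which is a great circle in $\Sph^3$. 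Doing this for each of the three unordered pairs $\{i,j\}$ gives three great circles; but $\acol$ is defined as $\bigcup_{i\neq j}(\T_i\cap\T_j)$ with the pairs ordered, so one must check that $\T_i\cap\T_j$ and $\T_j\cap\T_i$ contribute the same circle (they do, trivially) — actually the count of six comes from the fact that each $\T_i\cap\T_j$ is the preimage of \emph{two} antipodal points, i.e.\ two Hopf fibers, not one: e.g.\ $\{y=0\}\cap\{z=0\}$ on $\Sph^2(1/2)$ is $\{(\pm 1/2,0,0)\}$, whose preimages are two distinct great circles. So each of the three pairs $\{i,j\}$ contributes two great circles, giving six total. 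Pairwise disjointness then follows because distinct Hopf fibers are disjoint, together with checking that the six chosen points on $\Sph^2(1/2)$ (namely $(\pm1/2,0,0)$, $(0,\pm1/2,0)$, $(0,0,\pm1/2)$) are all distinct.

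For (ii), recall $\T_i=\{d^{\Sph^3}_C=\pi/4\}$ precisely when $\phopf(\T_i)$ is the great circle on $\Sph^2(1/2)$ at distance $\pi/4$ (spherical distance $\pi/2$ at radius $1/2$, i.e.\ the equator) relative to the pair of poles $\phopf(C),\phopf(C^\perp)$. Each coordinate great circle on $\Sph^2(1/2)$ is the equator for exactly two antipodal pole-pairs among the three coordinate axes' intersection points — concretely $\{z=0\}$ is equidistant from $(\pm1/2,0,0)$ and from $(0,\pm1/2,0)$ — so $\T_i$ arises this way for exactly two choices of $C\in\acol$. Orthogonality of the other two tori along such a $C$ is the statement that the two corresponding coordinate great circles on $\Sph^2(1/2)$ meet orthogonally at $\phopf(C)$, which holds since they are coordinate circles; this lifts to orthogonality in $\Sph^3$ because $\phopf$ is a Riemannian submersion and the tori are $\phopf$-saturated.

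For (iii), $d^{\Sph^3}_C$ is constant along $C'$ because, writing $C,C'$ as Hopf fibers over antipodal pole-pairs on $\Sph^2(1/2)$, the distance in $\Sph^3$ between two Hopf fibers depends only on the distance between the corresponding points of $\Sph^2(1/2)$ (this is a standard feature of the Hopf fibration, following from the cohomogeneity-one structure of $\T^r$), and each point of $C'$ projects to a single point of $\Sph^2(1/2)$. For (iv), the distances in question reduce to distances between the six points $(\pm1/2,0,0),(0,\pm1/2,0),(0,0,\pm1/2)$ on $\Sph^2(1/2)$: a given one of these six points has one antipode (distance $\pi/2$ at radius $1/2$, corresponding to $C^\perp$ in $\Sph^3$) and four points at spherical distance $\pi/2$ from it (corresponding to $\Sph^3$-distance $\pi/4$), and these four are exactly the projections of $(\bigcup\acol)\setminus(C\cup C^\perp)$. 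I do not anticipate a serious obstacle here; the only point requiring a little care is the translation between $\Sph^3$-distances between Hopf fibers and $\Sph^2(1/2)$-distances between their images (and the resulting factor-of-two bookkeeping between $\pi/4$ in $\Sph^3$ and $\pi/2$ in $\Sph^2$), but since the proposition is explicitly asserted to be an immediate consequence of the definitions, presumably this correspondence is either routine or already available, so the argument is a matter of organizing these elementary checks.
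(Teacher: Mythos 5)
Your overall strategy---reducing all four claims to statements about the images under $\phopf$ of the three tori and the six circles---is in the same spirit as the paper, which treats the proposition as an immediate consequence of the definitions and substantiates it only by listing the circles of $\acol$ together with their explicit Hopf descriptions. Parts (i), (iii) and (iv) of your argument are essentially fine, provided you state cleanly the one fact you use repeatedly: the distance in $\Sph^3$ from a point $p$ to a Hopf fiber $\phopf^{-1}(q)$ equals the distance in $\Sph^2(1/2)$ from $\phopf(p)$ to $q$ (constancy along fibers follows from the isometric circle action preserving every fiber, and the submersion property gives the comparison of distances).

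Your treatment of (ii), however, contains a genuine error. You assert that each coordinate great circle of $\Sph^2(1/2)$ is ``the equator for exactly two antipodal pole-pairs,'' e.g.\ that $\{z=0\}$ is equidistant from $(\pm\tfrac12,0,0)$ and from $(0,\pm\tfrac12,0)$. This is false: those four points lie \emph{on} $\{z=0\}$, so their distance to points of $\{z=0\}$ varies from $0$ to $\pi/2$; the circle $\{z=0\}$ is at constant distance $\pi/4$ only from the single pole-pair $(0,0,\pm\tfrac12)$. Taken literally, your reasoning would produce four circles $C$ with $\T_1=\{d^{\Sph^3}_C=\pi/4\}$ (namely $\acirc{2},\acirc{2}^\perp,\acirc{3},\acirc{3}^\perp$), contradicting the asserted count of two, and it would also break the final clause of (ii): along $\acirc{3}=\phopf^{-1}\bigl(\bigl(\tfrac12,0,0\bigr)\bigr)$ the tori that intersect are $\T_1$ and $\T_2$, not the ``other two'' relative to $\T_1$. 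The correct identification is that $\T_i=\{d^{\Sph^3}_C=\pi/4\}$ precisely for the two fibers over the poles of the great circle $\phopf(\T_i)$, i.e.\ $C\in\{\acirc{i},\acirc{i}^\perp\}$ (so $\T_1$ corresponds to $\acirc{1},\acirc{1}^\perp$, and so on), and these are exactly the circles along which the remaining two tori intersect; the orthogonality then follows as you say, since the two relevant $\phopf$-images are coordinate great circles meeting orthogonally at the corresponding pole and $\phopf$ is a Riemannian submersion. With this correction the rest of your argument goes through.
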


Namely, we have
\begin{equation}
\begin{gathered}
  \acol
    =
    \{\acirc{i}\}_{i=1}^3 \cup \{\acirc{i}^\perp\}_{i=1}^3,
  \\
  \acirc{2}
    :=
    \left\{
      \frac{e^{\ii s}}{\sqrt{2}}
      (1,\ii)
    \right\}_{s \in \R}
    =
    \phopf^{-1}(\{(0,-\tfrac{1}{2},0)\}),
  \\
  \acirc{2}^\perp
    =
    \left\{
      \frac{e^{\ii s}}{\sqrt{2}}
      (1,-\ii)
    \right\}_{s \in \R}
    =
    \phopf^{-1}(\{(0,\tfrac{1}{2},0)\}),
  \\
  \acirc{3}
    :=
    \left\{
      \frac{e^{\ii s}}{\sqrt{2}}
      (1,1)
    \right\}_{s \in \R}
    =
    \phopf^{-1}(\{(\tfrac{1}{2},0,0)\}),
  \\
  \acirc{3}^\perp
    =
    \left\{
      \frac{e^{\ii s}}{\sqrt{2}}
      (1,-1)
    \right\}_{s \in \R}
    =
    \phopf^{-1}(\{(-\tfrac{1}{2},0,0)\}).
\end{gathered}
\end{equation}

\subsection*{Packing}
For each $C \in \acol$ define the closed solid torus
\begin{equation}
  T_C := \{d^{\Sph^3}_C \leq \pi/8\}
    \quad
    (C \in \acol).
\end{equation}

\begin{prop}[Collection $\rcol$]
\hspace{.1in}

\begin{enumerate}[(i)]
\item For any $C \neq C' \in \acol$
the intersection $T_C \cap T_{C'}$
is empty if $C'=C^\perp$
and otherwise consists
of a single great circle
lying on $\partial T_C \cap \partial T_{C'}$.
In particular $\{T_C \; : \; C \in \acol\}$
consists of six solid tori,
whose interiors are pairwise disjoint.

\item We have the equality
\begin{equation*}
  \bigcup_{C \neq C' \in \acol} (T_C \cap T_{C'})
  =
  \bigcup_{i=1}^3 \bigcup_{C \in \acol}
    (\T_i \cap \partial T_C),
\end{equation*}
and this set consists of twelve great circles,
the collection of which we call $\rcol$.
\end{enumerate}
\end{prop}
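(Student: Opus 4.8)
The plan is to push the entire configuration down to $\Sph^2(1/2)$ via the Hopf projection $\phopf$, reducing everything to the elementary spherical geometry of six points and three great circles. Every circle in $\acol$ has the form $\{e^{\ii s}v : s \in \R\}$ for a fixed unit $v \in \C^2$, hence is a fiber of $\phopf$; so $\phopf(C)$ is a single point for each $C \in \acol$, and since $\phopf$ is a Riemannian submersion all of whose fibers are geodesics, distance to a fiber descends exactly: $d^{\Sph^3}_C = d^{\Sph^2(1/2)}_{\phopf(C)} \circ \phopf$. Consequently $T_C = \phopf^{-1}\bigl(\overline{B}(\phopf(C),\tfrac\pi8)\bigr)$ and $\partial T_C = \phopf^{-1}\bigl(\partial\overline{B}(\phopf(C),\tfrac\pi8)\bigr)$, $\overline{B}$ denoting a closed geodesic ball of $\Sph^2(1/2)$. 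From the coordinates recorded above, $\{\phopf(C): C \in \acol\}$ is the set of six axis-poles of $\Sph^2(1/2)$ --- the vertices of a regular octahedron --- with $\phopf(C^\perp)$ antipodal to $\phopf(C)$ (distance $\pi/2$) and any two non-antipodal images at distance $\pi/4$; and each $\T_i$ is the $\phopf$-preimage of one of the three coordinate great circles $\gamma_1 = \{z=0\}$, $\gamma_2 = \{y=0\}$, $\gamma_3 = \{x=0\}$.

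Part (i) is then the intersection theory of $\tfrac\pi8$-caps on $\Sph^2(1/2)$. For $C' = C^\perp$ the cap centers are at distance $\pi/2 > \tfrac\pi8 + \tfrac\pi8$, so the closed caps are disjoint and $T_C \cap T_{C'} = \emptyset$. For $C' \neq C^\perp$ the centers are at distance $\pi/4 = \tfrac\pi8+\tfrac\pi8$, so the triangle inequality forces any common point of the closed caps to be the midpoint of the minimizing geodesic joining the centers --- unique since $\pi/4 < \operatorname{diam}\Sph^2(1/2)$ --- and this midpoint lies on both boundary circles; hence $T_C \cap T_{C'}$ is a single great circle contained in $\partial T_C \cap \partial T_{C'}$. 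The open $\tfrac\pi8$-caps about centers at distance $\pi/4$ are disjoint, so the interiors of the $T_C$ are pairwise disjoint; and the six $T_C$ are distinct since distinct $C$ have distinct $\phopf(C)$. This gives (i).

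For part (ii), the left-hand union ranges over the $\binom62 = 15$ unordered pairs from $\acol$: the three pairs $\{C,C^\perp\}$ contribute $\emptyset$, and the other twelve pairs correspond to the twelve edges of the octahedron, $\{C,C'\}$ contributing $\phopf^{-1}$ of the midpoint of the edge $\phopf(C)\phopf(C')$; since these twelve midpoints are distinct, the left side is twelve great circles. On the right side, $\T_i \cap \partial T_C = \phopf^{-1}\bigl(\gamma_i \cap \partial\overline{B}(\phopf(C),\tfrac\pi8)\bigr)$, which is empty when $\phopf(C)$ is a pole of $\gamma_i$ (at distance $\pi/4 > \pi/8$ from $\gamma_i$) and otherwise --- $\phopf(C)$ then lying on $\gamma_i$ --- consists of exactly the two points of $\gamma_i$ at arc length $\pi/8$ to either side of $\phopf(C)$, a great circle through the center of a geodesic circle meeting it in precisely two points. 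Each $\gamma_i$ carries four octahedron vertices, cyclically spaced $\pi/4$ apart, so the $\tfrac\pi8$-neighbours of those four vertices are exactly the midpoints of the four octahedron edges lying on $\gamma_i$; as the three circles $\gamma_i$ together carry all twelve edges (four apiece) and meet one another only at octahedron vertices, the right side is again $\phopf^{-1}$ of the twelve edge-midpoints. The two sides therefore coincide, and $\rcol$ is this collection of twelve great circles.

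I expect the only non-bookkeeping step to be the identity $d^{\Sph^3}_C = d^{\Sph^2(1/2)}_{\phopf(C)}\circ\phopf$ for a Hopf fiber $C$, which underpins the whole reduction; it follows either from the Riemannian-submersion property of $\phopf$ (horizontal minimizing geodesics project to minimizing geodesics of equal length, and $\pi/8$ is well below the diameter $\pi/2$ of $\Sph^2(1/2)$, so no wrap-around arises) or, concretely, from $\cos 2 d^{\Sph^3}_{\acirc{1}} = |w_1|^2 - |w_2|^2$ and its analogues under the relevant isometries. Everything beyond this is the combinatorics of the octahedron and its three equatorial great circles.
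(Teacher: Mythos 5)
Your proposal is correct, and it takes a genuinely different route from the paper's. The paper argues intrinsically in $\Sph^3$: for $p \in T_C \cap T_{C'}$ the triangle inequality forces $C'$ to be a nearest neighbour of $C$ at distance $\pi/4$, the minimizing geodesics from $p$ meet both circles orthogonally, so $p$ lies on the unique torus among $\{\T_i\}_{i=1}^3$ containing $C \cup C'$, and the intersection circle and the count of twelve are then read off from how each $\T_i$ meets the tubes $T_{C''}$ and the spheres' worth of circles on it. You instead let a single lemma, $d^{\Sph^3}_C = d^{\Sph^2(1/2)}_{\phopf(C)}\circ\phopf$ for each $C \in \acol$ (legitimate since these are Hopf fibers, and justified correctly either by the submersion/horizontal-lift argument or by the explicit identity $\cos 2d^{\Sph^3}_{\acirc{1}} = \abs{w_1}^2-\abs{w_2}^2$ transported by unitaries), do all the work, after which everything reduces to $\pi/8$-caps about the six octahedron vertices of $\Sph^2(1/2)$ and the three coordinate great circles: the equality case of the triangle inequality pins each nonempty cap intersection to the unique edge midpoint, and both sides of (ii) become the preimage of the twelve edge midpoints. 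What your reduction buys is transparency: uniqueness of the intersection circle and the coincidence of the two unions in (ii) are immediate from two-dimensional cap geometry, and it incidentally renders precise a step the paper states loosely (the sets $\T\cap\{d^{\Sph^3}_C=\pi/8\}$ and $\T\cap\{d^{\Sph^3}_{C'}=\pi/8\}$ are each a pair of great circles sharing exactly one, namely the relevant member of $\rcol$, rather than being equal). What the paper's intrinsic argument buys is independence from the descent lemma and a description in terms of the torus containing $C\cup C'$ that fits the way the prisms and their faces are analyzed later; at bottom both proofs rest on the same equality case of the triangle inequality, just upstairs versus downstairs.
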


\begin{proof}
Suppose $C \neq C' \in \acol$.
and $p \in T_C \cap T_{C'}$.
Then the distance between $C$ and $C'$
is at most $\pi/4$,
so in fact $C'$
must be one of the four nearest neighbors
of $C$ in $\acol$
and therefore $d^{\Sph^3}_C|_{C'}=\pi/4$.
Moreover, $p$ must lie on a great circle
intersecting $C$ and $C'$ orthogonally,
and so $p$ lies on the unique torus
$\T \in \{\T_1,\T_2,\T_3\}$
containing $C \cup C'$.
We have
$
  \T \cap \{d^{\Sph^3}_C=\pi/8\}
  =
  \T \cap \{d^{\Sph^3}_{C'}=\pi/8\}
$,
and this set is a great circle.
Finally, each $\T_i$ is disjoint
from two of the six tori
in $\{T_{C''}\}_{C'' \in \acol}$
and intersects the boundaries of the remaining four
in a total of four great circles,
while the $\T_i$ themselves intersect
(in pairs) only along the circles in $\acol$. 
\end{proof}

\begin{figure}
  \includegraphics[width=3in]{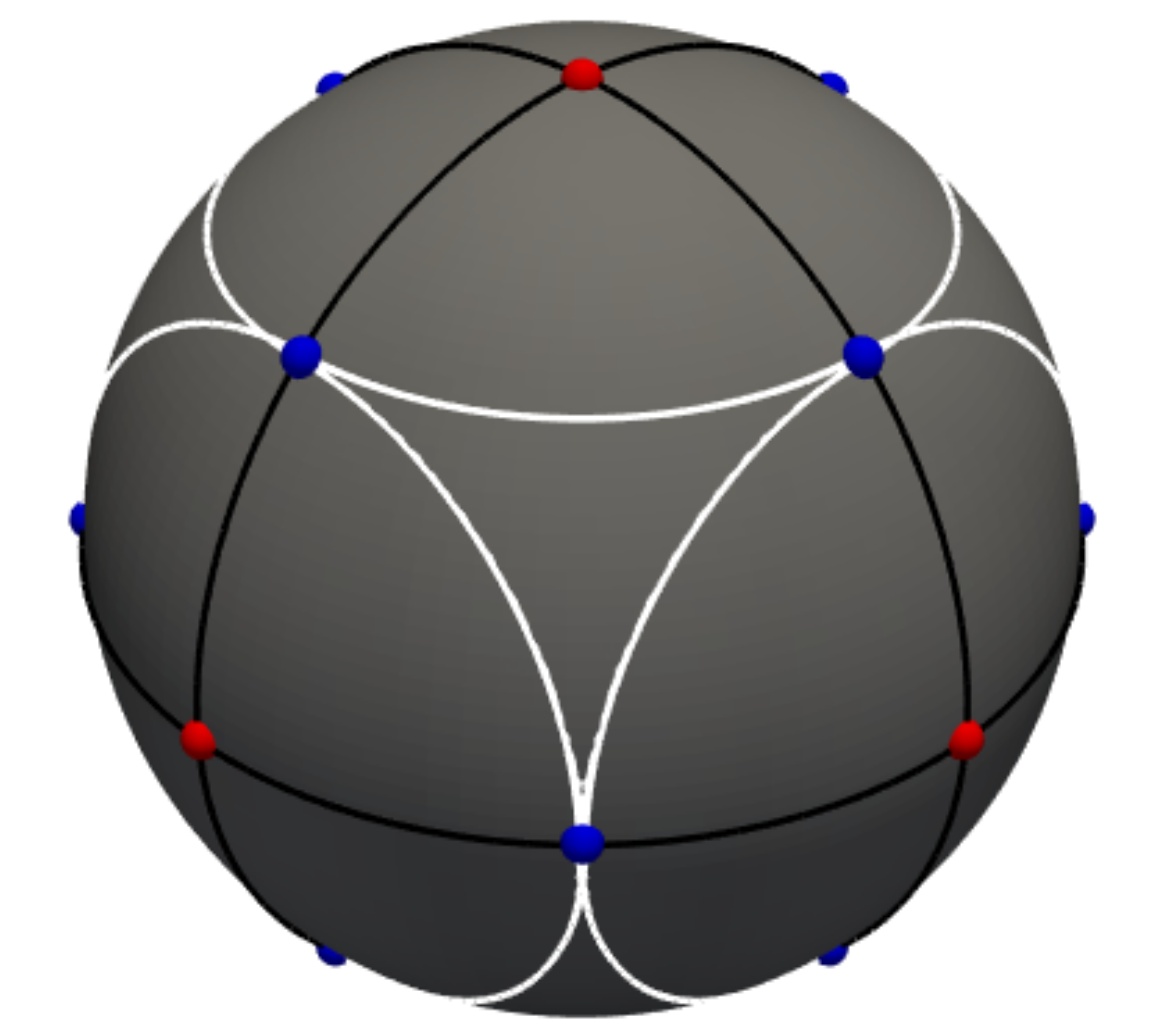}
  \caption*{The images under $\phopf$
  of $\{\T_i\}_{i=1}^3$ (in black),
  $\acol$ (red),
  $\{\partial T_C\}_{C \in \acol}$ (white),
  and $\rcol$ (blue).}
\end{figure}

Explicitly,
\begin{equation}
\label{rcol}
\begin{gathered}
  \rcol
    =
    \{
      \Gamma_{12},
      \Gamma_{13},
      \Gamma_{12^\perp},
      \Gamma_{13^\perp},
      \Gamma_{1^\perp2},
      \Gamma_{1^\perp3},
      \Gamma_{1^\perp2^\perp},
      \Gamma_{1^\perp3\perp},
      \Gamma_{23},
      \Gamma_{23^\perp},
      \Gamma_{2^\perp3},
      \Gamma_{2^\perp3^\perp}
    \},
  \\
  \Gamma_{12}
    :=
    \left\{
      e^{\ii s}
      \left(
        \cos \frac{\pi}{8},
        \ii \sin \frac{\pi}{8}
      \right)
    \right\}_{s \in \R}
    =
    \phopf^{-1}
      \left(
        \left\{
          \frac{\sqrt{2}}{4}(0,-1,1)
        \right\}
      \right),
  \\
  \Gamma_{1^\perp2^\perp}
    :=
    \left\{
      e^{\ii s}
      \left(
        \sin \frac{\pi}{8},
        -\ii \cos \frac{\pi}{8}
      \right)
    \right\}_{s \in \R}
    =
    \phopf^{-1}
      \left(
        \left\{
          \frac{\sqrt{2}}{4}(0,1,-1)
        \right\}
      \right)
    =
    \Gamma_{12}^\perp,
  \\
  \Gamma_{13}
    :=
    \left\{
      e^{\ii s}
      \left(
        \cos \frac{\pi}{8},
        \sin \frac{\pi}{8}
      \right)
    \right\}_{s \in \R}
    =
    \phopf^{-1}
      \left(
        \left\{
          \frac{\sqrt{2}}{4}(1,0,1)
        \right\}
      \right),
  \\
  \Gamma_{1^\perp3^\perp}
    :=
    \left\{
      e^{\ii s}
      \left(
        \sin \frac{\pi}{8},
        -\cos \frac{\pi}{8}
      \right)
    \right\}_{s \in \R}
    =
    \phopf^{-1}
      \left(
        \left\{
          \frac{\sqrt{2}}{4}(-1,0,-1)
        \right\}
      \right)
    =
    \Gamma_{13}^\perp,
    \\
  \Gamma_{12^\perp}
    :=
    \left\{
      e^{\ii s}
      \left(
        \cos \frac{\pi}{8},
        -\ii \sin \frac{\pi}{8}
      \right)
    \right\}_{s \in \R}
    =
    \phopf^{-1}
      \left(
        \left\{
          \frac{\sqrt{2}}{4}(0,1,1)
        \right\}
      \right),
  \\
  \Gamma_{1^\perp2}
    :=
    \left\{
      e^{\ii s}
      \left(
        \sin \frac{\pi}{8},
        \ii \cos \frac{\pi}{8}
      \right)
    \right\}_{s \in \R}
    =
    \phopf^{-1}
      \left(
        \left\{
          \frac{\sqrt{2}}{4}(0,-1,-1)
        \right\}
      \right)
    =
    \Gamma_{12^\perp}^\perp,
    \\
  \Gamma_{13^\perp}
    :=
    \left\{
      e^{\ii s}
      \left(
        \cos \frac{\pi}{8},
        -\sin \frac{\pi}{8}
      \right)
    \right\}_{s \in \R}
    =
    \phopf^{-1}
      \left(
        \left\{
          \frac{\sqrt{2}}{4}(-1,0,1)
        \right\}
      \right),
  \\
  \Gamma_{1^\perp3}
    :=
    \left\{
      e^{\ii s}
      \left(
        \sin \frac{\pi}{8},
        \cos \frac{\pi}{8}
      \right)
    \right\}_{s \in \R}
    =
    \phopf^{-1}
      \left(
        \left\{
          \frac{\sqrt{2}}{4}(1,0,-1)
        \right\}
      \right)
    =
    \Gamma_{13^\perp}^\perp,
  \\
  \Gamma_{23}
    :=
    \left\{
      \frac{e^{\ii s}}{\sqrt{2}}
      (1,e^{\ii \pi/4})
    \right\}_{s \in \R}
    =
    \phopf^{-1}
      \left(
        \left\{
          \frac{\sqrt{2}}{4}(1,-1,0)
        \right\}
      \right),
  \\
  \Gamma_{2^\perp3^\perp}
    :=
    \left\{
      \frac{e^{\ii s}}{\sqrt{2}}
      (1,e^{5\ii\pi/4})
    \right\}_{s \in \R}
    =
    \phopf^{-1}
      \left(
        \left\{
          \frac{\sqrt{2}}{4}(-1,1,0)
        \right\}
      \right)
    =
    \Gamma_{23}^\perp,
  \\
  \Gamma_{2^\perp3}
    :=
    \left\{
      \frac{e^{\ii s}}{\sqrt{2}}
      (1,e^{-\ii\pi/4})
    \right\}_{s \in \R}
    =
    \phopf^{-1}
      \left(
        \left\{
          \frac{\sqrt{2}}{4}(1,1,0)
        \right\}
      \right),
  \\
  \Gamma_{23^\perp}
    :=
    \left\{
      \frac{e^{\ii s}}{\sqrt{2}}
      (1,e^{3\ii\pi/4})
    \right\}_{s \in \R}
    =
    \phopf^{-1}
      \left(
        \left\{
          \frac{\sqrt{2}}{4}(-1,-1,0)
        \right\}
      \right)
    =
    \Gamma_{2^\perp3}^\perp.
\end{gathered}
\end{equation}

\subsection*{Scaffolding}
Now let $N \geq 1$ be a given integer.
We fix a set $\ptcol^N_{\acirc{1}}$
of $2N$ equally spaced points on $\acirc{1}$.
(In particular
$\ptcol^N_{\acirc{1}}$ is invariant
under antipodal reflection.)
We take
\begin{equation}
  \ptcol^N_{\acirc{1}}
  :=
  \{(e^{j\pi \ii/N},0)\}_{j=1}^{2N}.
\end{equation}
Then there are unique collections
$\scaf^N_{\T_2}$ and $\scaf^N_{\T_3}$
of great circles
on $\T_2$ and $\T_3$ respectively
such that
$
  \bigcup_{L \in \scaf^N_{\T_i}} (L \cap \acirc{1})
  =
  \ptcol^N_{\acirc{1}}
$
for each $i \in \{2,3\}$.
Each of these collections contains $N$ great circles,
and each of these great circles
intersects $\acirc{1}$ orthogonally
and also $\acirc{1}^\perp$ orthogonally at two points.
We have
\begin{equation}
  \scaf^N_{\T_2}
    =
    \{
      \{
        e^{j\pi \ii/N}(\cos r, \sin r)
      \}_{r \in \R}
    \}_{j=1}^N,
  \quad
  \scaf^N_{\T_3}
    =
    \{
      \{
        e^{j\pi \ii/N}(\cos r, \ii\sin r)
      \}_{r \in \R}
    \}_{j=1}^N.
\end{equation}

\begin{prop}[Collection $\ptcol^N_{\acirc{1}^\perp}$]
\label{pts-on-C1perp}
If $N$ is even, then
$
  \bigcup_{L \in \scaf^N_{\T_2}} (L \cap \acirc{1}^\perp)
  =
  \bigcup_{L \in \scaf^N_{\T_3}} (L \cap \acirc{1}^\perp)
$
and consists of $2N$ equally spaced points,
which we call $\ptcol^N_{\acirc{1}^\perp}$.
\end{prop}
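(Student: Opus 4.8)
The plan is a direct computation from the explicit parametrizations. First I would compute, for each $j \in \{1,\dots,N\}$, the intersection with $\acirc{1}^\perp = \{(0,e^{\ii t})\}_{t\in\R}$ of the great circle $L := \{e^{j\pi\ii/N}(\cos r,\sin r)\}_{r\in\R}$ belonging to $\scaf^N_{\T_2}$: a point $e^{j\pi\ii/N}(\cos r,\sin r)$ has vanishing first coordinate precisely when $\cos r = 0$, i.e.\ $r \in \tfrac{\pi}{2}+\pi\Z$, whence $L\cap\acirc{1}^\perp = \{(0,\pm e^{j\pi\ii/N})\} = \{(0,e^{j\pi\ii/N}),\,(0,e^{(j+N)\pi\ii/N})\}$. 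Taking the union over $j \in \{1,\dots,N\}$, the pair $\{j,\,j+N\}$ runs over a complete set of residues modulo $2N$, so $\bigcup_{L\in\scaf^N_{\T_2}} (L\cap\acirc{1}^\perp) = \{(0,e^{k\pi\ii/N})\}_{k=1}^{2N}$, a set of $2N$ equally spaced points on $\acirc{1}^\perp$; note that this step does not use the parity of $N$.

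Second I would run the identical computation for $\scaf^N_{\T_3}$: the great circle $\{e^{j\pi\ii/N}(\cos r,\ii\sin r)\}_{r\in\R}$ again meets $\acirc{1}^\perp$ exactly where $\cos r=0$, yielding the pair $\{(0,\pm\ii e^{j\pi\ii/N})\}$. The one new ingredient is the factor $\ii = e^{\pi\ii/2}$, which rotates these points by $\pi/2$ relative to the $\T_2$ case, and this is exactly where evenness enters: when $N$ is even we may write $\tfrac{\pi}{2} = \tfrac{(N/2)\pi}{N}$, so $(0,\ii e^{j\pi\ii/N}) = (0,e^{(j+N/2)\pi\ii/N})$ and $(0,-\ii e^{j\pi\ii/N}) = (0,e^{(j+3N/2)\pi\ii/N})$ with $j+N/2$ and $j+3N/2$ integers; as $j$ ranges over $\{1,\dots,N\}$ the pair $\{j+N/2,\,j+3N/2\}$ again runs over a complete set of residues modulo $2N$, so $\bigcup_{L\in\scaf^N_{\T_3}}(L\cap\acirc{1}^\perp)$ equals the very same set $\{(0,e^{k\pi\ii/N})\}_{k=1}^{2N}$. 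This establishes both assertions of the proposition, and we set $\ptcol^N_{\acirc{1}^\perp} := \{(0,e^{k\pi\ii/N})\}_{k=1}^{2N}$.

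There is no substantive obstacle here; the proof is a short computation. The only points meriting care are the bookkeeping of exponents modulo $2N$ and the observation that the parity hypothesis is used precisely --- and only --- to place the $\pi/2$ rotation induced by the factor $\ii$ in the definition of $\T_3$ onto the lattice $\tfrac{\pi}{N}\Z$ of phases that $\scaf^N_{\T_2}$ cuts out on $\acirc{1}^\perp$. (When $N$ is odd the two unions are instead disjoint, each still a set of $2N$ equally spaced points but the two offset from one another by $\pi/2N$, so the hypothesis genuinely cannot be dropped.)
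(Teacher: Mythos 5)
Your proof is correct, and it is complete: the computation of $L\cap\acirc{1}^\perp$ for each explicitly parametrized circle in $\scaf^N_{\T_2}$ and $\scaf^N_{\T_3}$ is right, the bookkeeping of the exponents modulo $2N$ works out, and you correctly isolate the single place where evenness of $N$ is needed (to put the phase shift $\pi/2=\tfrac{(N/2)\pi}{N}$ coming from the factor $\ii$ onto the lattice $\tfrac{\pi}{N}\Z$). The paper argues differently in style, though it rests on the same elementary fact: its entire proof is the one synthetic sentence that two circles meeting one another and $\acirc{1}$ orthogonally at two points meet $\acirc{1}^\perp$ orthogonally and collectively at four equally spaced points, i.e.\ it pairs each $L_2\in\scaf^N_{\T_2}$ with the $L_3\in\scaf^N_{\T_3}$ through the same antipodal pair of $\ptcol^N_{\acirc{1}}$ and observes that their traces on $\acirc{1}^\perp$ form a square; the role of the parity hypothesis (that a rotation by $\pi/2$ preserves a set of $2N$ points spaced $\pi/N$ apart exactly when $N$ is even) is left implicit there. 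Your coordinate computation buys explicitness on precisely that point, and your closing remark that for odd $N$ the two unions are disjoint, offset by $\pi/(2N)$, is a correct sharpness observation not in the paper; the paper's version buys brevity and coordinate-free geometric transparency. Either argument suffices.
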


\begin{proof}
Two circles meeting one another and $\acirc{1}$
orthogonally at two points
meet $\acirc{1}^\perp$ orthogonally
and (collectively) at four equally spaced points.
\end{proof}

We henceforth assume $N$ even.
We have
\begin{equation}
  \ptcol^N_{\acirc{1}^\perp}
    =
    \{(0,e^{j\pi \ii/N})\}_{j=1}^{2N}.
\end{equation}

For each $i \in \{2,3\}$
each $L \in \scaf^N_{\T_i}$
also orthogonally intersects
each of the two circles in $\acol$
along which $\T_i$ and $\T_1$ intersect.
For each $C \in \acol$ we set
\begin{equation}
  \ptcol^N_C
  :=
  \bigcup_{i=2}^3
    \bigcup_{L \in \scaf^N_{\T_i}}
      (L \cap C),
\end{equation}
a collection of $2N$ equally spaced points,
consistent with the pre-existing definitions
in case $C \in \{\acirc{1},\acirc{1}^\perp\}$.
We further define
\begin{equation}
  \ptcol^N := \bigcup_{C \in \acol} \ptcol^N_C.
\end{equation}
Then
\begin{equation}
\begin{gathered}
  \ptcol^N_{\acirc{2}}
    =
    \{
      e^{j \pi \ii / N}
        (1,\ii) / \sqrt{2}
    \}_{j=1}^{2N},
  \quad
  \ptcol^N_{\acirc{2}^\perp}
    =
    \{
      e^{j \pi \ii / N}
        (1,-\ii) / \sqrt{2}
    \}_{j=1}^{2N},
  \\
  \ptcol^N_{\acirc{3}}
    =
    \{
      e^{j \pi \ii / N}
        (1,1) / \sqrt{2}
    \}_{j=1}^{2N},
  \quad
  \ptcol^N_{\acirc{3}^\perp}
    =
    \{
      e^{j \pi \ii / N}
        (1,-1) / \sqrt{2}
    \}_{j=1}^{2N}.
\end{gathered}
\end{equation}

\begin{prop}[Collection $\scaf^N_{\T_1}$]
Let $N$ be divisible by $4$.
Then for any of the four $C \in \acol$ on $\T_1$
the $N$ great circles on $\T_1$
orthogonally intersecting $C$ (collectively)
on $\ptcol^N_C$
intersect (collectively)
each circle $C' \in \acol$ on $\T_1$
in precisely $\ptcol^N_{C'}$.
Thus the collection of these $N$ great circles,
which we call $\scaf^N_{\T_1}$,
is independent of $C$.
\end{prop}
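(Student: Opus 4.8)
The plan is to transfer the entire statement to the flat intrinsic geometry of $\T_1$, where it reduces to bookkeeping of phases, with the hypothesis $4\mid N$ entering at exactly one point.

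First I would introduce the parametrization $\Phi(\sigma,\tau):=2^{-1/2}(e^{\ii\sigma},e^{\ii\tau})$ of $\T_1$, under which the induced metric is $\tfrac{1}{2}(d\sigma^2+d\tau^2)$, so that $\T_1$ is flat. Two preliminary facts, standard for Clifford tori and provable by elementary computation, carry the argument. First, the great circles lying on $\T_1$ are exactly the images under $\Phi$ of the lines $\{\tau-\sigma=c\}$ (which are precisely the Hopf fibers contained in $\T_1$) and of the lines $\{\tau+\sigma=c\}$; these are the two rulings of $\T_1$, two circles from the same ruling are disjoint or equal, and a circle from each ruling meets the other orthogonally in exactly two points, antipodal in $\sigma$. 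Second, reading off the explicit formulas of the excerpt, the four members of $\acol$ lying on $\T_1$ are the circles $\acirc{2},\acirc{2}^\perp,\acirc{3},\acirc{3}^\perp$, which are the circles $\Phi(\{\tau-\sigma=a\})$ with $a\in\{0,\tfrac{\pi}{2},\pi,\tfrac{3\pi}{2}\}$, and on each such circle $\acirc\bullet$ the set $\ptcol^N_{\acirc\bullet}$ consists precisely of the $2N$ points with $\sigma\in\tfrac{\pi}{N}\Z$. In particular the $N$ great circles orthogonally meeting a given $\acirc\bullet$ on $\ptcol^N_{\acirc\bullet}$ all lie in the second ruling.

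Next I would carry out the one computation. Fixing $C=\Phi(\{\tau-\sigma=a\})$ among the four circles, the second-ruling circle through the point of $\ptcol^N_C$ with $\sigma=j\pi/N$ is $\Phi(\{\sigma+\tau=a+2j\pi/N\})$, so the $N$ circles in question are $\Phi(\{\sigma+\tau=a+2k\pi/N\})$ for $k=0,\dots,N-1$. Intersecting the $k$-th of these with another of the four circles $C'=\Phi(\{\tau-\sigma=a'\})$ amounts to solving $\sigma+\tau=a+2k\pi/N$ together with $\tau-\sigma=a'$, yielding the two points with $\sigma\equiv\tfrac{a-a'}{2}+\tfrac{k\pi}{N}\pmod{\pi}$; letting $k$ range, the full intersection with $C'$ is the set of points with $\sigma\in\tfrac{a-a'}{2}+\tfrac{\pi}{N}\Z$. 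This coincides with $\ptcol^N_{C'}$ --- the points with $\sigma\in\tfrac{\pi}{N}\Z$ --- if and only if $\tfrac{a-a'}{2}\in\tfrac{\pi}{N}\Z$; since $a-a'$ can equal $\pm\tfrac{\pi}{2}$, the binding case of this requirement is $\pm\tfrac{\pi}{4}\in\tfrac{\pi}{N}\Z$, i.e.\ $4\mid N$, and conversely $4\mid N$ makes it hold for all four pairs. This establishes the displayed equality of the proposition.

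Finally, the independence of $C$ follows formally: any circle $L$ among the $N$ orthogonally meeting $C$ on $\ptcol^N_C$ lies in the second ruling, hence meets each of the other three circles $C'$ orthogonally, and by the previous step it meets $C'$ inside $\ptcol^N_{C'}$; thus $L$ is one of the $N$ great circles orthogonally meeting $C'$ on $\ptcol^N_{C'}$, and equality of cardinalities forces the two $N$-element collections to agree, so the resulting collection $\scaf^N_{\T_1}$ is independent of the chosen $C$. I do not expect a genuine obstacle here; the only point requiring care is keeping track of the $\pi/4$ phase offsets among the four circles on $\T_1$, and that is precisely what manufactures the divisibility hypothesis. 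As a cross-check, one can observe that the order-eight unitary $(w_1,w_2)\mapsto(e^{\ii\pi/4}w_1,e^{-\ii\pi/4}w_2)$ preserves $\T_1$, cyclically permutes the four circles, fixes every second-ruling circle setwise, and (when $4\mid N$) preserves $\ptcol^N$, which gives an alternative, symmetry-based derivation of the same conclusion.
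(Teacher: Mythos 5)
Your proof is correct, and it reaches the same arithmetic heart of the matter as the paper --- the $\pi/4$ offset between the circles of $\acol$ on $\T_1$ versus the $\pi/N$ spacing of the marked points, which is what forces $4\mid N$ --- but the route is packaged differently. You flatten everything into intrinsic coordinates $(\sigma,\tau)$ on $\T_1$, classify the great circles on the Clifford torus as the two rulings, and then do a single uniform phase computation that treats all four target circles $C'$ at once (the case $a-a'=\pi$ needing only $2\mid N$, the cases $a-a'=\pm\pi/2$ needing $4\mid N$). The paper instead stays in the ambient $\Sph^3$: it disposes of $C'=C^\perp$ by citing the earlier even-$N$ proposition on $\ptcol^N_{\acirc{1}^\perp}$, reduces to a nearest neighbor $C'=\rot_{\acirc{1}}^{-\pi/2}C$, and transports the intersection point by the isoclinic (Hopf) rotations $\rot_{\acirc{1}}^{\pm\pi/4}\rot_{\acirc{1}^\perp}^{\pi/4}$, using $N\equiv 0 \pmod 4$ to see that a quarter-turn along $C$ lands back in $\ptcol^N_C$ and that $\rot_{\acirc{1}}^{-\pi/2}$ carries $\ptcol^N_C$ into $\ptcol^N_{C'}$ by the very definition of these point sets. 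What your version buys is transparency and uniformity --- the divisibility hypothesis appears at exactly one visible spot, and the independence of $C$ drops out by comparing the two $N$-element families of second-ruling circles directly; what it costs is the explicit appeal to the (standard, and correctly flagged) fact that the great circles on $\T_1$ are precisely the two rulings, a fact the paper also uses, though only implicitly, when it asserts that $L$ meets $C'$ at $\rot_{\acirc{1}}^{-\pi/4}\rot_{\acirc{1}^\perp}^{\pi/4}p$. Your closing symmetry cross-check with the unitary $(w_1,w_2)\mapsto(e^{\ii\pi/4}w_1,e^{-\ii\pi/4}w_2)$ is a nice independent confirmation but is not needed for the argument.
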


\begin{proof}
Suppose $C \in \acol$ and $C \subset \T_1$
and suppose $L \subset \T_1$ is a great circle
intersecting $C$ orthogonally
at $\pm p \in \ptcol^N_C$.
Since $N$ is even,
$L \cap C^\perp \subset \ptcol^N_{C^\perp}$
(as in \ref{pts-on-C1perp}),
so suppose $C'$
is one of the two circles in $\acol$
on $\T_1$ and at distance $\pi/4$ from $C$.
We may assume $C'=\rot_{\acirc{1}}^{-\pi/2}C$.
Then $L$ intersects $C'$ at
$p' = \rot_{\acirc{1}}^{-\pi/4}\rot_{\acirc{1}^\perp}^{\pi/4}p$
and $-p'$.
Since $p \in \ptcol^N_C$ and $N \bmod 4 = 0$,
we also have
$q:=\rot_{\acirc{1}}^{\pi/4}\rot_{\acirc{1}^\perp}^{\pi/4}p \in C$,
but then by definition of $\ptcol^N_C$ and $\ptcol^N_{C'}$
the point $p'=\rot_{\acirc{1}}^{-\pi/2}q$
indeed belongs to $\ptcol^N_{C'}$.
\end{proof}

We henceforth assume $N$ divisible by $4$.
Set
\begin{equation}
\label{scaf}
  \scaf^N := \bigcup_{i=1}^3 \scaf^N_{\T_i},
\end{equation}
a collection of $3N$ great circles.
We have
\begin{equation}
  \bigcup_{L \neq L' \in \scaf^N}
    (L \cap L')
  =
  \bigcup_{L \in \scaf^N}
    \bigcup_{C \in \acol} 
      L \cap C
  =
  \ptcol^N.
\end{equation}
Concretely,
\begin{equation}
  \scaf^N_{\T_1}
  =
  \left\{
    \left\{
      \frac{e^{j\pi \ii /N}}{\sqrt{2}}
      (e^{\ii s}, e^{-\ii s})
    \right\}_{s \in \R}
  \right\}_{j=1}^N.
\end{equation}

\subsection*{Tessellations}
For each $C \in \acol$
let $\scol^N_C$
be the collection
of the $2N$ connected components
of the intersections with the solid torus $T_C$
of the $N$ great spheres
(collectively) intersecting $C$ orthogonally
on the set $\ptcol^N_C$;
each element of $\scol^N$
is thus a closed spherical cap of radius $\pi/8$.
Then
$T_C \setminus \bigcup \scol^N_C$
has $2N$ congruent components,
and
$T_C \setminus (\bigcup \scol^N_C \cup \bigcup_{i=1}^3 \T_i)$
has $8N$ congruent components,
the collection of whose closures
we call $\prcol^N_C$.
We also define
\begin{equation}
  \scol^N := \bigcup_{C \in \acol} \scol^N_C,
  \quad
  \prcol^N := \bigcup_{C \in \acol} \prcol^N_C.
\end{equation}
The below description
of the $48N$ congruent members of $\prcol^N$
follows immediately from the preceding definitions.

\begin{prop}[Geometry of $\Omega \in \prcol^N_C$]
\label{prisms_for_three_tori}
Assume $N \in 4\Z$ positive.
Let $C \in \acol$ and $\Omega \in \prcol^N_C$.
Then
\begin{enumerate}[(i)]
\item \label{congruence}
$\Omega$ is congruent to
$\Omega_{\frac{\pi}{8},\frac{\pi}{2},\frac{\pi}{N}}$
(as defined in \ref{prism_def});

\item the two rectangular faces
$R_\Omega^\pm$
lie on two (minimal) tori in $\{\T_i\}_{i=1}^3$,
the two triangular faces
$T_\Omega^\pm$
lie on two members of $\scol^N_C$,
and the parallelogram face
$P_\Omega$
lies on the torus
(of nonzero constant mean curvature)
$\partial T_C$;

\item each of the four edges
      $R_\Omega^{{\pm}_1} \cap T_\Omega^{{\pm}_2}$ 
      is an arc of length $\pi/8$
      on a great circle in $\scaf^N$;

\item each of the two edges
      $P_\Omega \cap R_\Omega^\pm$
      has constant dihedral angle $\pi/2$
      and is an arc of length $\pi/N$
      on a great circle in $\rcol$;

\item \label{four_prisms}
      for any given edge of $\Omega$
      on a circle in $\scaf^N \cup \rcol$
      there are precisely four elements
      of $\prcol^N$ 
      (including $\Omega$ itself)
      sharing that edge; and

\item \label{axial_sym} the symmetry group of $\Omega$
is the order-$2$ group
\begin{equation*}
  \{
    \mathsf{R} \in O(4)
    \; : \;
    \mathsf{R}\Omega=\Omega
  \}
  =
  \langle \refl_{L_\Omega} \rangle,
\end{equation*}
where $L_\Omega$,
the axis of $\Omega$,
is the great circle
intersecting the midpoint of $C$
and the centroid of $P_\Omega$.
\end{enumerate}
\end{prop}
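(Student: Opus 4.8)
The plan is to deduce everything from item \ref{congruence}: once $\Omega$ is known to be congruent to the standard prism $\Omega_{\pi/8,\pi/2,\pi/N}$ of \ref{prism_def}, items (ii)--(iv) and the quantitative content of \ref{four_prisms} and \ref{axial_sym} follow by pulling back Proposition \ref{prism_geometry} through the congruence and matching the pulled-back data against the collections $\{\T_i\}_{i=1}^3$, $\scol^N_C$, $\partial T_C$, $\scaf^N$, and $\rcol$. Observe first that the hypothesis $N \in 4\Z$ positive forces $Z = \pi/N \le \pi/4 = \Theta/2$, so Propositions \ref{prism_geometry} and \ref{orbits} indeed apply to $\Omega_{\pi/8,\pi/2,\pi/N}$.

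For \ref{congruence}: each member of $\prcol^N$ is, by construction, cut out of a solid torus $T_C$ by the two Clifford tori through its core $C$ (which meet orthogonally) and by two great spheres normal to $C$ at adjacent points of $\ptcol^N_C$, at angular distance $\pi/N$ along $C$; since any two such pieces are manifestly isometric, it suffices to identify one of them. I would take $C = \acirc1$ and the $\Omega \in \prcol^N_{\acirc1}$ cut out of $T_{\acirc1}$ by $\T_2 = \T^0$ and $\T_3 = \T^{\pi/2}$ and by the consecutive great spheres $\Sigma_0, \Sigma_{\pi/N}$ (among the $N$ great spheres meeting $\acirc1$ orthogonally on $\ptcol^N_{\acirc1}$, which support $\scol^N_{\acirc1}$). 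Applying the isometry $\rot_{\acirc1}^{-\pi/4} \circ \rho$, where $\rho$ is the scalar rotation $(w_1,w_2) \mapsto e^{-\ii\pi/(2N)}(w_1,w_2)$ (which shifts position along $\acirc1$ while fixing each Clifford torus through it, sending $\Sigma_0,\Sigma_{\pi/N}$ to $\Sigma_{-\pi/(2N)},\Sigma_{\pi/(2N)}$) and $\rot_{\acirc1}^{-\pi/4}$ carries $\T^0,\T^{\pi/2}$ to $\T^{-\pi/4},\T^{\pi/4}$, I would check from the explicit parametrizations that the image is exactly $\{e^{\ii z}(\cos r, e^{\ii\theta}\sin r) : r \in [0,\pi/8],\ \theta \in [-\pi/4,\pi/4],\ z \in [-\pi/(2N),\pi/(2N)]\} = \Omega_{\pi/8,\pi/2,\pi/N}$: the bound $r \le \pi/8$ records $\Omega \subseteq T_{\acirc1}$; the bound on $\theta$ records that the two rectangular faces lie on Clifford tori through $\acirc1$ meeting at angle $\pi/2 = \Theta$; and the bound on $z$ records the angular separation $\pi/N = Z$ along $\acirc1$ of the two supporting great spheres.

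With \ref{congruence} in hand, items (ii)--(iv) are read off \ref{prism_geometry}(i),(iii)--(viii) after pulling back, using identifications immediate from the definitions: the two rectangular faces lie on the Clifford tori $\T^{\pm\Theta/2}$, which correspond to the two members of $\{\T_i\}$ containing $C$; the two triangular faces lie on the totally geodesic great spheres $\Sigma_{\pm Z/2}$, hence (as components of their intersections with $T_C$) on members of $\scol^N_C$; the parallelogram face lies on $T^{\pi/8} = \{d^{\Sph^3}_C = \pi/8\} = \partial T_C$; each edge $R^{\pm_1}_\Omega \cap T^{\pm_2}_\Omega$ has length $\pi/8$ and lies on a great circle through a point of $\ptcol^N_C$, meeting $C$ orthogonally and lying on one of the Clifford tori through $C$, hence on a circle of $\scaf^N$ (invoking the propositions defining $\scaf^N_{\T_2}$, $\scaf^N_{\T_3}$, $\scaf^N_{\T_1}$); and each edge $P_\Omega \cap R^\pm_\Omega$, lying on $\partial T_C \cap \T_i$, lies on a circle of $\rcol$. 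For \ref{four_prisms}: an edge on a circle of $\scaf^N$ lies in the interior of $T_C$ on the transverse intersection of one of the Clifford tori through $C$ with a great sphere supporting a member of $\scol^N_C$, so a small ball about an interior point of the edge is divided by these two surfaces into four sectors, each the interior of a member of $\prcol^N_C$; an edge on a circle of $\rcol$ lies on $\partial T_C \cap \T_i = T_C \cap T_{C'}$ for a neighbour $C' \in \acol$ (by the proposition defining $\rcol$), and since $T_C$ and $T_{C'}$ have disjoint interiors while $\T_i$ passes through both cores, a neighbourhood of an interior point of the edge meets exactly two members of $\prcol^N_C$ and two of $\prcol^N_{C'}$ --- four in all.

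For \ref{axial_sym}: pulling back \ref{prism_geometry}(ix) shows $\refl_{L_\Omega}$ preserves $\Omega$, where $L_\Omega$ is the image of $C^0_0$; since $C^0_0$ runs through $(1,0)$, the midpoint of the $\acirc1$-edge, and through the centroid of $P_{\pi/8,\pi/2,\pi/N}$, the great circle $L_\Omega$ passes through the midpoint of $C$ and the centroid of $P_\Omega$. That there are no further symmetries I would argue for $\Omega_{\pi/8,\pi/2,\pi/N}$ (whose symmetry group coincides with that of $\Omega$ via the congruence): any symmetry fixes the unique parallelogram face $P$ (the only one on a non-minimal CMC surface), hence fixes both the centroid of $P$ and, since it must preserve the surface $\partial T_{\acirc1}$ supporting $P$ and the solid torus $T_{\acirc1}$ it bounds, the core $\acirc1$ and with it the midpoint of the $\acirc1$-edge; fixing these two non-antipodal points of $C^0_0$ it fixes $C^0_0$ pointwise, but a normal slice of $\Omega_{\pi/8,\pi/2,\pi/N}$ at the midpoint of its $\acirc1$-edge is a circular sector of opening angle $\Theta = \pi/2 < \pi$ about the $C^0_0$-direction, whose only nontrivial symmetry is $\refl_{C^0_0}$; hence the symmetry group is $\langle \refl_{C^0_0} \rangle$, which transports to $\langle \refl_{L_\Omega} \rangle$. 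The only genuinely nonformal step is the coordinate bookkeeping in \ref{congruence} --- pinning down the right isometry and matching the three pairs of bounding surfaces against the explicit descriptions of $T_{\acirc1}$, the Clifford tori through $\acirc1$, and the great spheres supporting $\scol^N_{\acirc1}$; everything else is transport of \ref{prism_geometry} and comparison with the definitions.
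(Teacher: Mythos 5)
The paper offers no proof of this proposition at all (it is introduced with the remark that the description ``follows immediately from the preceding definitions''), so the comparison here is with that implicit routine verification. Your treatment of items (i)--(v) is a correct elaboration of it: the explicit congruence $\rot_{\acirc{1}}^{-\pi/4}\circ\rho$ onto $\Omega_{\pi/8,\pi/2,\pi/N}$ is right (the cutting surfaces of $T_{\acirc{1}}$ are indeed only the two tori through $\acirc{1}$ and the spheres $\Sigma_{j\pi/N}$, since $\T_1$ is at distance $\pi/4$ from $\acirc{1}$), the transport of Proposition \ref{prism_geometry} identifies the faces and edges correctly, and your local count at an interior point of an edge gives \ref{four_prisms} (for the $\rcol$ case you should add one sentence noting that the edge is not contained in any third solid torus $T_{C''}$, so no prism of $\prcol^N_{C''}$ can share it, but this is immediate from the distances of the circles in $\rcol$ to the circles in $\acol$).

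There is, however, a genuine gap in your proof of \ref{axial_sym}, precisely at the step that is supposed to give the order-$2$ upper bound. Everything you establish about a putative symmetry $\mathsf{R}$ --- that it preserves $P_\Omega$, fixes the centroid of $P_\Omega$ and the midpoint of the edge on $C$, hence fixes $L_\Omega$ (alias $C^0_0$ in the model) pointwise, and preserves the normal slice $\Omega_{\pi/8,\pi/2,\pi/N}\cap\Sigma_0$ --- is also satisfied by the two reflections $\refl_{\Sigma_0}\colon(w_1,w_2)\mapsto(\overline{w_1},w_2)$ and $\refl_{\Sigma^0}\colon(w_1,w_2)\mapsto(w_1,\overline{w_2})$: both fix $C^0_0$ pointwise (its points have real coordinates), the first fixes the slice sector pointwise, and the second acts on it exactly as $\refl_{C^0_0}$ does. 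So your assertion that the sector's ``only nontrivial symmetry is $\refl_{C^0_0}$'' is false for ambient isometries of $\Sph^3$, and knowing the restriction of $\mathsf{R}$ to the slice cannot determine $\mathsf{R}$ as an element of $O(4)$; the argument as written would admit a group of order four. The conclusion is still correct, but to finish you must use the $z$-extent of the prism: in the coordinates $e^{\ii z}(\cos r,e^{\ii\theta}\sin r)$ one has $\refl_{\Sigma_0}\colon(r,\theta,z)\mapsto(r,\theta+2z,-z)$ and $\refl_{\Sigma^0}\colon(r,\theta,z)\mapsto(r,-\theta-2z,z)$, which shear the box $[0,\tfrac{\pi}{8}]\times[-\tfrac{\pi}{4},\tfrac{\pi}{4}]\times[-\tfrac{\pi}{2N},\tfrac{\pi}{2N}]$ and so do not preserve $\Omega$ (equivalently, they carry the triangular face on $\Sigma_{Z/2}$ to a spherical triangle on $\Sigma_{-Z/2}$ rotated by $\rot_{\acirc{1}}^{Z}$ relative to the actual face). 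Adding this check --- after first noting that an isometry fixing $C^0_0$ pointwise and preserving the tangent cone of $\Omega$ at the edge midpoint must be one of the four maps $\{1,\refl_{C^0_0},\refl_{\Sigma_0},\refl_{\Sigma^0}\}$ --- closes the gap; since \ref{axial_sym} is what later yields the stabilizer and order computations for $\grp^N$, it is worth stating carefully.
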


For each $C \in \acol$
and for each $\Omega \in \prcol^N_C$
we define the center
of $\Omega$
to be the midpoint of $L_\Omega \cap \Omega$.
Then the collection of centers
of the $\Omega \in \prcol^N_{\acirc{1}}$ is
\begin{equation}
\label{centers}
\begin{gathered}
  \ctrcol^N_{\acirc{1}}
    :=
    \{\omega^N_{j\ell}\}_{j,\ell \in \Z},
  \\
  \omega^N_{j\ell}
    :=
      e^{\ii\pi \frac{2j+1}{2N}}
        \left(
           \cos \frac{\pi}{16}, ~
           e^{\ii\pi\frac{2\ell+1}{4}} \sin \frac{\pi}{16}
        \right),
\end{gathered}
\end{equation}
and, for any given $j,\ell \in \Z$,
we write $\Omega^N_{j\ell}$ for the unique
element of $\prcol^N$ with center $\omega^N_{j\ell}$.

\subsection*{Symmetries}
We continue to assume $N \in 4\Z$ positive,
and we define the group
\begin{equation}
\label{group_definition}
  \grp^N
    :=
    \langle
      \refl_\gamma
      \; : \;
      \gamma \in \scaf^N \cup \rcol
    \rangle.
\end{equation}

\begin{prop}[Action of $\grp^N$ on $\prcol^N$]
\label{action_on_collections}
Let $N \in 4\Z$ be positive.
Then $\grp^N$ preserves each of
$\acol$, $\{\T_i\}_{i=1}^3$, $\{T_C\}_{C \in \acol}$,
$\scaf^N$, $\ptcol^N$, $\scol^N$,
and
$\prcol^N$.
Moreover, $\prcol^N$ has at most two orbits under $\grp^N$.
\end{prop}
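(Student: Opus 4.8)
The plan is to prove the invariance statements by reducing them to a check on the generators $\refl_\gamma$, and to prove the orbit bound by combining transitivity of $\grp^N$ on $\acol$ with an explicit computation of the action of a subgroup on the centers $\ctrcol^N_{\acirc{1}}$. For each of the collections $\acol$, $\{\T_i\}_{i=1}^3$, $\{T_C\}_{C\in\acol}$, $\scaf^N$, $\ptcol^N$, $\scol^N$, $\prcol^N$ the isometries of $\Sph^3$ preserving it form a subgroup of $O(4)$, so it suffices to check that each generator $\refl_\gamma$ ($\gamma\in\scaf^N\cup\rcol$) preserves each one. Moreover these collections are built from one another isometry-equivariantly: $\{\T_i\}_{i=1}^3=\{\{d^{\Sph^3}_C=\pi/4\}:C\in\acol\}$ and $\{T_C\}_{C\in\acol}=\{\{d^{\Sph^3}_C\le\pi/8\}:C\in\acol\}$ are determined by $\acol$; since $N\in4\Z$ (so that \ref{pts-on-C1perp} and the ensuing propositions apply), $\scaf^N$ is determined by $\acol$ together with $\ptcol^N$ --- $\scaf^N_{\T_i}$ being the great circles on $\T_i$ meeting the two members of $\acol$ on $\T_i$ orthogonally along points of $\ptcol^N$ --- while conversely $\ptcol^N=\bigl(\bigcup\scaf^N\bigr)\cap\bigl(\bigcup\acol\bigr)$; and $\scol^N$, $\prcol^N$ are then determined by $\{T_C\}$, $\{\T_i\}$, and $\ptcol^N$ directly from their definitions. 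Hence it is enough to show that each $\refl_\gamma$ preserves $\acol$ and $\ptcol^N$.

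Both facts I would extract from the explicit formulas of this section. First, every generator lies in $U(2)\cup\overline{U}(2)$: conjugating by the Hopf flow $\{\rot_{\acirc{1}}^t\rot_{\acirc{1}^\perp}^t\}_{t\in\R}$ reduces the reflection through any circle of $\scaf^N$ to complex conjugation $(w_1,w_2)\mapsto(\overline{w_1},\overline{w_2})$, to $(w_1,w_2)\mapsto(\overline{w_1},-\overline{w_2})$, or to $(w_1,w_2)\mapsto(\overline{w_2},\overline{w_1})$ (all antiunitary), while each circle of $\rcol$ is a Hopf fibre and its reflection is therefore unitary. Applying $\pghopf$, one checks that $\pghopf\refl_\gamma$ is in every case a symmetry of the octahedron with vertices $\phopf\bigl(\bigcup\acol\bigr)=\bigl\{(\pm\tfrac12,0,0),(0,\pm\tfrac12,0),(0,0,\pm\tfrac12)\bigr\}$ --- a reflection in a coordinate hyperplane when $\gamma\in\scaf^N$, a rotation through $\pi$ about an edge axis when $\gamma\in\rcol$ --- so $\refl_\gamma$ permutes the six fibres $\phopf^{-1}(\text{vertex})$, i.e.\ preserves $\acol$, and hence $\{\T_i\}_{i=1}^3$ and $\{T_C\}$. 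Writing each $\ptcol^N_C$ as $\{e^{\ii j\pi/N}v_C\}_{j\in\Z}$ for a fixed unit $v_C\in\C^2$, a direct computation then shows that a $\scaf^N$-reflection carries it to $\{e^{\ii(\theta-j\pi/N)}\,\overline{v_C}\}_{j}$ with $\theta\in\tfrac\pi N\Z$ and $\overline{v_C}$ proportional to some $v_{C'}$, whereas an $\rcol$-reflection produces an additional phase shift by an odd multiple of $\pi/4=(N/4)(\pi/N)$, which lies in $\tfrac\pi N\Z$ precisely because $4\mid N$; in each case the image is again some $\ptcol^N_{C'}$, so $\ptcol^N$ is preserved.

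For the orbit bound, observe that $\pghopf(\grp^N)$ contains the three coordinate-hyperplane reflections (from $\scaf^N_{\T_1}$, $\scaf^N_{\T_2}$, $\scaf^N_{\T_3}$); composing these with the $\pi$-rotations $\pghopf\refl_\Gamma$ ($\Gamma\in\rcol$) one obtains the three coordinate transpositions, so $\pghopf(\grp^N)$ is the full octahedral group and is transitive on the six vertices --- equivalently $\grp^N$ is transitive on $\acol$, hence on $\{T_C\}_{C\in\acol}$. Since each $\prcol^N_C$ is exactly the set of members of $\prcol^N$ contained in $T_C$ and distinct $T_C$ have disjoint interiors, every member of $\prcol^N$ lies in the $\grp^N$-orbit of a member of $\prcol^N_{\acirc{1}}$, so it suffices to show $\prcol^N_{\acirc{1}}$ meets at most two $\grp^N$-orbits. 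To see this, let $H:=\langle\refl_\gamma:\gamma\in\scaf^N_{\T_2}\cup\scaf^N_{\T_3}\rangle\le\grp^N$. In coordinates each such generator is $(w_1,w_2)\mapsto e^{\ii\theta}(\overline{w_1},\overline{w_2})$ or $(w_1,w_2)\mapsto e^{\ii\theta}(\overline{w_1},-\overline{w_2})$, hence fixes $\acirc{1}$ setwise, preserves $T_{\acirc{1}}$, and permutes $\prcol^N_{\acirc{1}}$ together with its set of centers $\ctrcol^N_{\acirc{1}}=\{\omega^N_{j\ell}\}$. Identifying $\ctrcol^N_{\acirc{1}}$ with $(\Z/2N\Z)\times(\Z/4\Z)$ through $(j,\ell)$, one reads off from \ref{centers} that the $\scaf^N_{\T_2}$ generators act by $(j,\ell)\mapsto(2j_0-1-j,\,-1-\ell)$ and the $\scaf^N_{\T_3}$ generators by $(j,\ell)\mapsto(2j_0-1-j,\,1-\ell)$, where $2j_0-1$ runs over all odd residues mod $2N$. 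Each of these preserves the parity of $j+\ell$; on the other hand products of pairs of them realize every translation of $j$ by an even amount with $\ell$ fixed, as well as the map $(j,\ell)\mapsto(j+\text{even},\,\ell+2)$, and combining these with a single such reflection shows that $H$ is transitive on each of the two classes $\{j+\ell\ \text{even}\}$ and $\{j+\ell\ \text{odd}\}$. Hence $H$ --- \emph{a fortiori} $\grp^N$ --- has at most two orbits on $\prcol^N_{\acirc{1}}$, which completes the argument.

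The step I expect to be the main obstacle is this last one: one must produce a subgroup of $\grp^N$ visibly stabilizing a single solid torus $T_C$ and then compute its action on $\ctrcol^N_C$ precisely enough to exhibit the parity of $j+\ell$ as an invariant splitting $\prcol^N_C$ into exactly two classes. The reduction and the generator checks, by contrast, are routine once one has the explicit coordinates of this section at hand.
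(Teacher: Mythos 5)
Your proof is correct and follows essentially the same route as the paper's: invariance is reduced to the generators $\refl_\gamma$, and the orbit bound rests on the same two facts --- transitivity of $\langle \refl_L \rangle_{L \in \scaf^N_{\T_2} \cup \scaf^N_{\T_3}}$ on each parity class of the centers $\omega^N_{j\ell}$ together with transitivity on $\{T_C\}_{C \in \acol}$ --- which the paper calls evident and you verify by the explicit index computation $(j,\ell)\mapsto(2j_0-1-j,\pm 1-\ell)$. The only real difference is presentational: you run the generator check through the Hopf projection and explicit $U(2)\cup\overline{U}(2)$ formulas (anticipating tools the paper deploys only later, in \ref{group_description}), whereas the paper argues synthetically that each $\refl_\gamma$ preserves the torus containing $\gamma$, its scaffold family, and the circles of $\acol$ on it with their marked points, and then bootstraps to the derived collections.
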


\begin{proof}
Granting that $\grp^N$ preserves $\prcol^N$,
the final claim is clear from the evident facts
that
$
  \langle
    \refl_L
    \; : \;
    L \in \bigcup_{i=2}^3 \scaf^N_{\T_i}
  \rangle
$
acts transitively on $\{\omega^N_{j\ell}\}_{j+\ell \in 2\Z}$
and that
$
  \langle
    \refl_\Gamma
    \; : \;
    \Gamma \in \rcol
  \rangle
$
acts transitively on $\{T_C\}_{C \in \acol}$.
To check the remaining claims
it suffices to do so at the level of the generators.

First suppose $L \in \scaf^N$.
Then $L \subset \T_i$ for some $i \in \{1,2,3\}$,
and $\refl_L$ clearly preserves
$\T_i$, $\scaf^N_{\T_i}$,
and each $C \in \acol$ on $\T_i$
along with the corresponding $\ptcol^N_{C}$. 
Since $\refl_L$ preserves $\T_i$
and each $C \in \acol$ on $\T_i$,
it in fact preserves $\{\T_i\}_{i=1}^3$
and $\acol$.
Since $\refl_L$ preserves
$\scaf^N_{\T_i}$ and $\ptcol^N_{C}$
for each $C \in \acol$ on $\T_i$, 
it in fact preserves all of $\scaf^N$
and all of $\ptcol^N$.

Now suppose $\Gamma \in \rcol$.
Then $\Gamma$ lies on some $\T_j$
and is equidistant from a pair of circles
in $\acol$ on $\T_j$.
In particular $\refl_\Gamma$
preserves $\T_j$, $\scaf^N_{\T_j}$,
and the collection of those circles in $\acol$ on $\T_j$.
It then follows easily that
$\refl_\Gamma$ preserves
$\{\T_i\}_{i=1}^3$, $\acol$,
all of $\scaf^N$, and all of $\ptcol^N$.

Since $\refl_L$ and $\refl_\Gamma$
preserve $\acol$,
they also preserve $\{T_C\}_{C \in \acol}$.
Since they preserve $\ptcol^N$,
they also preserve $\scol^N$.
Since they preserve $\{T_C\}_{C \in \acol}$,
$\scol^N$, and $\{\T_i\}_{i=1}^3$,
they preserve $\prcol^N$ too.
\end{proof}

For the reader's reference
we write down the generators
originating from $\rcol$:
\begin{equation}
\begin{gathered}
  \refl_{\Gamma_{12}}
    \begin{bmatrix}w_1 \\ w_2\end{bmatrix}
  =
  \frac{1}{\sqrt{2}}
    \begin{bmatrix}
      1 & -\ii \\ \ii & -1
    \end{bmatrix}
    \begin{bmatrix}w_1 \\ w_2\end{bmatrix},
  \qquad
  \refl_{\Gamma_{1^\perp 2^\perp}}
    \begin{bmatrix}w_1 \\ w_2\end{bmatrix}
  =
  \frac{1}{\sqrt{2}}
    \begin{bmatrix}
      -1 & \ii \\ -\ii & 1
    \end{bmatrix}
    \begin{bmatrix}w_1 \\ w_2\end{bmatrix},
  \\
  \refl_{\Gamma_{12^\perp}}
    \begin{bmatrix}w_1 \\ w_2\end{bmatrix}
  =
  \frac{1}{\sqrt{2}}
    \begin{bmatrix}
      1 & \ii \\ -\ii & -1
    \end{bmatrix}
    \begin{bmatrix}w_1 \\ w_2\end{bmatrix},
  \qquad
  \refl_{\Gamma_{1^\perp 2}}
    \begin{bmatrix}w_1 \\ w_2\end{bmatrix}
  =
  \frac{1}{\sqrt{2}}
    \begin{bmatrix}
      -1 & -\ii \\ \ii & 1
    \end{bmatrix}
    \begin{bmatrix}w_1 \\ w_2\end{bmatrix},
  \\
  \refl_{\Gamma_{13}}
    \begin{bmatrix}w_1 \\ w_2\end{bmatrix}
  =
  \frac{1}{\sqrt{2}}
    \begin{bmatrix}
      1 & 1 \\ 1 & -1
    \end{bmatrix}
    \begin{bmatrix}w_1 \\ w_2\end{bmatrix},
  \qquad
  \refl_{\Gamma_{1^\perp 3^\perp}}
    \begin{bmatrix}w_1 \\ w_2\end{bmatrix}
  =
  \frac{1}{\sqrt{2}}
    \begin{bmatrix}
      -1 & -1 \\ -1 & 1
    \end{bmatrix}
    \begin{bmatrix}w_1 \\ w_2\end{bmatrix},
  \\
  \refl_{\Gamma_{13^\perp}}
    \begin{bmatrix}w_1 \\ w_2\end{bmatrix}
  =
  \frac{1}{\sqrt{2}}
    \begin{bmatrix}
      1 & -1 \\ -1 & -1
    \end{bmatrix}
    \begin{bmatrix}w_1 \\ w_2\end{bmatrix},
  \qquad
  \refl_{\Gamma_{1^\perp 3}}
    \begin{bmatrix}w_1 \\ w_2\end{bmatrix}
  =
  \frac{1}{\sqrt{2}}
    \begin{bmatrix}
      -1 & 1 \\ 1 & 1
    \end{bmatrix}
    \begin{bmatrix}w_1 \\ w_2\end{bmatrix},
  \\
  \refl_{\Gamma_{23}}
    \begin{bmatrix}w_1 \\ w_2\end{bmatrix}
  =
  \begin{bmatrix}
    0 & e^{-\ii \pi/4} \\ e^{\ii \pi/4} & 0
  \end{bmatrix}
  \begin{bmatrix}w_1 \\ w_2\end{bmatrix},
\qquad
\refl_{\Gamma_{2^\perp 3^\perp}}
    \begin{bmatrix}w_1 \\ w_2\end{bmatrix}
  =
  \begin{bmatrix}
    0 & -e^{-\ii \pi/4} \\ -e^{\ii \pi/4} & 0
  \end{bmatrix}
  \begin{bmatrix}w_1 \\ w_2\end{bmatrix},
\\
\refl_{\Gamma_{23^\perp}}
    \begin{bmatrix}w_1 \\ w_2\end{bmatrix}
  =
  \begin{bmatrix}
    0 & -e^{\ii \pi/4} \\ -e^{-\ii \pi/4} & 0
  \end{bmatrix}
  \begin{bmatrix}w_1 \\ w_2\end{bmatrix},
\qquad
\refl_{\Gamma_{2^\perp 3}}
    \begin{bmatrix}w_1 \\ w_2\end{bmatrix}
  =
  \begin{bmatrix}
    0 & e^{\ii \pi/4} \\ e^{-\ii \pi/4} & 0
  \end{bmatrix}
  \begin{bmatrix}w_1 \\ w_2\end{bmatrix}.
\end{gathered}
\end{equation}

\subsection*{Colorings}
By a coloring of $\prcol^N$
we mean a function
$c: \prcol^N \to \{0,1\}$.
Given any $\gamma \in \scaf^N \cup \rcol$,
we say that a coloring $c$
is consistent along $\gamma$
if for any $\Omega_1,\Omega_2 \in \prcol^N$
sharing an edge on a circle
$\gamma \in \rcol \cup \scaf^N$
we have
\begin{equation}
  c(\Omega_1)=c(\Omega_2)
  \; \Leftrightarrow \;
  \refl_\gamma(\Omega_1)=\Omega_2.
\end{equation}
(Recall from
\ref{prisms_for_three_tori}.\ref{four_prisms}
that exactly four members of $\prcol^N$
meet along a given edge
on a given $\gamma \in \rcol \cup \scaf^N$,
and by \ref{action_on_collections}
these four are exchanged in pairs
under $\refl_\gamma$.)
We call $c$ consistent if
it is consistent along every
$\gamma \in \scaf^N \cup \rcol$.
Note that a consistent coloring,
if one exists,
is uniquely determine
by its value on any single
member of $\prcol^N$;
thus, if there is one consistent coloring,
then there are precisely two.
Note finally that
if $\rot \in O(4)$
preserves $\prcol^N$
and $c$ is a consistent coloring of $\prcol^N$,
then
$c \circ \rot$ is also
a consistent coloring of $\prcol^N$.

\begin{prop}[Colorings and orbits]
\label{colorings_and_orbits}
Let $N \in 4\Z$ be positive.
The collection $\prcol^N$ admits
a consistent coloring $c$
if and only if
$\prcol^N$ has precisely two orbits under $\grp^N$,
namely the preimages under $c$
of $0$ and $1$.
\end{prop}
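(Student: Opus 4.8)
The plan is to prove both implications directly from the definitions, using Proposition \ref{action_on_collections} (which tells us $\grp^N$ preserves $\prcol^N$ and has at most two orbits) and Proposition \ref{prisms_for_three_tori}.\ref{four_prisms} (exactly four prisms meet along each edge, swapped in pairs by $\refl_\gamma$).

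\textbf{The forward direction.} Suppose a consistent coloring $c$ exists. First I would observe that if $\Omega_1, \Omega_2 \in \prcol^N$ are related by some generator $\refl_\gamma$ ($\gamma \in \scaf^N \cup \rcol$) such that $\Omega_1$ and $\Omega_2$ share an edge on $\gamma$, then $c(\Omega_1) = c(\Omega_2)$ by consistency. The key step is to upgrade this to: \emph{any} two prisms in the same $\grp^N$-orbit have the same color. Since $\grp^N$ is generated by the $\refl_\gamma$, it suffices to show $c(\Omega) = c(\refl_\gamma \Omega)$ for every generator $\refl_\gamma$ and every $\Omega \in \prcol^N$, not merely those sharing an edge on $\gamma$. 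If $\Omega$ and $\refl_\gamma\Omega$ are distinct, they are both among the four prisms meeting along a common edge on $\gamma$ (namely the image under $\refl_\gamma$, equivalently the $\gamma$-edge of $\Omega$ carried onto itself), and those four are paired by $\refl_\gamma$ with $\Omega \leftrightarrow \refl_\gamma\Omega$; consistency along $\gamma$ then gives $c(\Omega) = c(\refl_\gamma \Omega)$. If $\refl_\gamma \Omega = \Omega$ this is trivial. Hence $c$ is constant on each orbit, so $c$ takes at most two values on at most two orbits. Since $c$ takes both values $0$ and $1$ (otherwise, by the fact that a consistent coloring is determined by one value and there are then two consistent colorings — or more simply, since neighbors sharing a $\gamma$-edge not related by $\refl_\gamma$ have differing colors, which happens for some pair — $c$ is nonconstant), there are exactly two orbits, and they are $c^{-1}(0)$ and $c^{-1}(1)$. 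I should double-check the claim that $c$ is nonconstant: among the four prisms meeting along a fixed edge on some $\gamma$, pick $\Omega_1$ and an $\Omega_2$ with $\refl_\gamma \Omega_1 \neq \Omega_2$; consistency along $\gamma$ forces $c(\Omega_1) \neq c(\Omega_2)$, so $c$ is indeed nonconstant.

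\textbf{The reverse direction.} Suppose $\prcol^N$ has precisely two orbits $\mathcal{O}_0, \mathcal{O}_1$ under $\grp^N$. Define $c \colon \prcol^N \to \{0,1\}$ by $c \equiv i$ on $\mathcal{O}_i$. I must check $c$ is consistent along every $\gamma \in \scaf^N \cup \rcol$: given $\Omega_1, \Omega_2$ sharing a $\gamma$-edge, I need $c(\Omega_1) = c(\Omega_2) \Leftrightarrow \refl_\gamma \Omega_1 = \Omega_2$. The ($\Leftarrow$) direction is immediate since $\refl_\gamma \in \grp^N$ preserves orbits. For ($\Rightarrow$): among the four prisms meeting the given $\gamma$-edge, $\refl_\gamma$ acts as a fixed-point-free involution (it swaps them in two pairs — this is part of \ref{prisms_for_three_tori}.\ref{four_prisms} together with \ref{action_on_collections}; note $\refl_\gamma$ cannot fix any of these four prisms, as a prism's full symmetry group is $\langle\refl_{L_\Omega}\rangle$ by \ref{prisms_for_three_tori}.\ref{axial_sym} and $L_\Omega \neq \gamma$). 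So the four prisms form two $\refl_\gamma$-pairs; I claim the two prisms within one pair lie in different orbits while prisms in different pairs on the same side lie in the same orbit. The crucial point — \textbf{this is the main obstacle} — is ruling out that $\refl_\gamma$ could send a prism to another prism \emph{in the same orbit}, i.e. that the two-orbit hypothesis is genuinely incompatible with any $\refl_\gamma$-pair being monochromatic. If some $\refl_\gamma$-pair $\{\Omega, \refl_\gamma\Omega\}$ lay in a single orbit, then picking a second pair $\{\Omega', \refl_\gamma\Omega'\}$ at the same edge (the other two prisms), I would need to show this forces a third orbit or a contradiction with "precisely two orbits." I expect this to follow from a parity/counting argument: the two orbits must be exchanged by \emph{some} element of $O(4)$ preserving $\prcol^N$ but not in $\grp^N$ (e.g., a suitable rotation or an element swapping two of the tori $\T_i$), and the local structure at a $\gamma$-edge — four prisms, of which two have $c = 0$ and two have $c = 1$ in the intended coloring — is then forced; alternatively, one shows directly that $\grp^N$ acts on the four prisms at any edge with the $\refl_\gamma$-pairs as the two orbits of the stabilizer of that edge. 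I would make this precise by analyzing, for a representative edge on a representative $\gamma$ (using the explicit generators and the concrete description in \ref{prisms_for_three_tori}), which of the four incident prisms are $\grp^N$-equivalent; by the symmetry of the setup (transitivity statements in the proof of \ref{action_on_collections}) it suffices to treat one edge on one circle of $\scaf^N$ and one of $\rcol$. Once that local analysis confirms the $\refl_\gamma$-pairing coincides with the orbit-partition restricted to the four incident prisms, consistency along $\gamma$ follows, and hence $c$ is a consistent coloring.
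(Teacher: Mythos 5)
Your forward direction contains a genuine gap at its central step. You reduce to showing $c(\Omega)=c(\refl_\gamma\Omega)$ for every generator $\refl_\gamma$ and \emph{every} $\Omega\in\prcol^N$, and you justify this by claiming that whenever $\Omega\neq\refl_\gamma\Omega$ the two prisms are among the four meeting along a common edge on $\gamma$. That claim is false: only prisms possessing an edge on $\gamma$ are constrained by consistency along $\gamma$, and most prisms have no edge on $\gamma$ at all. For instance, with $\gamma=\Gamma_{12}$ and $\Omega\in\prcol^N_{\acirc{3}}$, none of the edges of $\Omega$ lies on $\Gamma_{12}$, and $\refl_{\Gamma_{12}}\Omega$ lies in $\prcol^N_{\acirc{3}^\perp}$, in a solid torus disjoint from $T_{\acirc{3}}$; consistency along $\Gamma_{12}$ says nothing directly about this pair. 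So your local argument does not yield global $\refl_\gamma$-invariance of $c$, which is exactly the point requiring a propagation mechanism. The paper closes this by using the two observations recorded just before the proposition: $c\circ\rot$ is a consistent coloring for any $\rot\in O(4)$ preserving $\prcol^N$, and a consistent coloring is determined by its value on a single prism. Hence $c\circ\refl_\gamma$ is a consistent coloring that agrees with $c$ on the prisms which \emph{do} have an edge on $\gamma$ (this is the part of your argument that is correct), and therefore agrees with $c$ everywhere, giving $\grp^N$-invariance of $c$. Your remaining steps (nonconstancy of $c$ from the four prisms at one edge, then combining with the at-most-two-orbits statement of \ref{action_on_collections}) are fine once this invariance is in hand.

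Your reverse direction is, as you yourself flag, not a proof: the key local claim that at each edge the $\refl_\gamma$-pairing of the four incident prisms coincides with the orbit partition is left as a plan (``parity/counting argument,'' ``I would make this precise by analyzing a representative edge''), so nothing is established there. This is less damaging than the gap above, since the paper neither uses nor writes out the converse (it is only asserted to follow from the definitions), but as submitted your proposal proves neither implication completely; the forward one, which is the implication actually needed for \ref{group_description}.\ref{two_orbits} and hence for Theorem \ref{thm:three_tori}, fails at the step identified above.
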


\begin{proof}
Suppose $c$ is a consistent coloring of $\prcol^N$.
We observed in \ref{action_on_collections}
that $\prcol^N$ has at most two orbits under $\grp^N$,
and it is clear that
for each $\chi \in \{0,1\}$
there is a single orbit containing
$c^{-1}(\{\chi\})$.
To complete the proof in this direction
($\Rightarrow$)
it suffices to take any
$\gamma \in \scaf^N \cup \rcol$
and check that
$\refl_\gamma$ preserves $c$,
but $c \circ \refl_\gamma$
is a consistent coloring
which agrees with $c$ on all prisms
having an edge on $\gamma$
and therefore indeed agrees with $c$ globally.
The converse direction,
which we will not need,
follows even more directly from the definitions.
\end{proof}

\begin{prop}[Consistency]
\label{consistency}
Let $N \in 4+8\Z$ be positive.
Then $\prcol^N$ admits a consistent coloring.
\end{prop}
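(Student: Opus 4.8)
The plan is to deduce the statement from Propositions~\ref{colorings_and_orbits} and~\ref{action_on_collections}: by the former it suffices to show that $\prcol^N$ consists of exactly two $\grp^N$-orbits, by the latter there are at most two, so it is enough to rule out that $\prcol^N$ is a single orbit. For this I would first record the \emph{local} orbit structure. As in the proof of Proposition~\ref{action_on_collections}, for each $C\in\acol$ the subgroup of $\grp^N$ generated by those $\refl_L$ with $L\in\scaf^N$ and $\refl_LT_C=T_C$ preserves $T_C$ and has exactly two orbits on $\prcol^N_C$; for $C=\acirc{1}$ these are $\{\Omega^N_{j\ell}:j+\ell\text{ even}\}$ and $\{\Omega^N_{j\ell}:j+\ell\text{ odd}\}$ (the generators acting on $\prcol^N_{\acirc{1}}$ by involutions $(j,\ell)\mapsto(a-j,b-\ell)$ with $a+b$ even, hence preserving the parity of $j+\ell$), and for the other five $C$ one transports this picture along any $\refl_\Gamma$ with $\Gamma\in\rcol$. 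Call the two orbits the \emph{local classes} of $\prcol^N_C$. Every generator of $\grp^N$ either fixes some $T_C$, and then preserves its two local classes, or carries some $T_C$ onto some $T_{C'}$---a reflection $\refl_\Gamma$, $\Gamma\in\rcol$, or a reflection $\refl_L$, $L\in\scaf^N_{\T_i}$, interchanging $\acirc{i}$ with $\acirc{i}^\perp$---in which case it carries each local class of $T_C$ onto a local class of $T_{C'}$, since it conjugates the relevant subgroup of $\scaf^N$-reflections into the other. Hence the number of $\grp^N$-orbits on $\prcol^N$ equals the number of connected components of the graph $\mathcal O$ on the twelve local classes whose edges record these bijections. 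Now $\mathcal O$ is a double cover of the complete graph $K_6$ on $\acol$ (any two of the six solid tori being interchanged by some generator), so it is disconnected---equivalently $\prcol^N$ has exactly two orbits---precisely when the monodromy homomorphism $H_1(K_6)\to\Z/2\Z$ is trivial.

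Since $H_1(K_6)$ is spanned by the triangles through a fixed vertex, it remains to check the monodromy around each such triangle, and there are two types. Those of the first type have vertices $\acirc{i}^\pm,\acirc{j}^\pm,\acirc{k}^\pm$ with $\{i,j,k\}=\{1,2,3\}$, and all three edges lie in $\rcol$; using the symmetries $\refl_L$ with $L\in\scaf^N_{\T_2}\cup\scaf^N_{\T_3}$, which fix $T_{\acirc{1}}$, it suffices to treat $\acirc{1},\acirc{2},\acirc{3}$, where multiplying the matrices recorded after Proposition~\ref{action_on_collections} gives
\begin{equation*}
  \refl_{\Gamma_{13}}\circ\refl_{\Gamma_{23}}\circ\refl_{\Gamma_{12}}\colon (w_1,w_2)\longmapsto(e^{\ii\pi/4}w_1,\ e^{3\ii\pi/4}w_2);
\end{equation*}
by~\eqref{centers} this sends $\Omega^N_{j\ell}$ to $\Omega^N_{j+N/4,\,\ell+1}$, so it changes the parity of $j+\ell$ by $N/4+1$, which is even exactly because $N\in4+8\Z$ forces $N/4$ to be odd. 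Those of the second type have vertices $\acirc{i},\acirc{i}^\perp,\acirc{j}^\pm$ with $j\ne i$, and one edge is an interchange by some $\refl_L$, $L\in\scaf^N_{\T_i}$, the other two lying in $\rcol$; taking, for instance, $L=\bigl\{\tfrac{e^{\ii k\pi/N}}{\sqrt2}(e^{\ii s},e^{-\ii s})\bigr\}_{s\in\R}\in\scaf^N_{\T_1}$ one computes
\begin{equation*}
  \refl_{\Gamma_{12}}\circ\refl_{\Gamma_{1^\perp 2}}\circ\refl_L\colon (w_1,w_2)\longmapsto e^{\ii(2k\pi/N-\pi/2)}(\overline{w_1},\overline{w_2}),
\end{equation*}
whose action on $\prcol^N_{\acirc{1}}$ changes the parity of $j+\ell$ by $N/2$, an even number since $N\in4\Z$; the remaining triangles of the second type are treated by the same device and likewise contribute $0$. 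Thus the monodromy vanishes on all of $H_1(K_6)$, $\mathcal O$ is disconnected, $\prcol^N$ has exactly two $\grp^N$-orbits, and Proposition~\ref{colorings_and_orbits} produces a consistent coloring.

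The part requiring care is the bookkeeping of the first paragraph: fixing the coordinates $(j,\ell)$ on each $\prcol^N_C$ compatibly enough that the monodromy around a cycle in $K_6$ is well defined, checking that every $\scaf^N$-reflection fixing a given $T_C$ acts on $\prcol^N_C$ by a coordinate-reversing involution (hence preserves the parity of $j+\ell$ for \emph{every} $N\in4\Z$), and checking that each reflection interchanging two solid tori changes that parity by an amount independent of the prism to which it is applied. Granting this, the whole proof collapses to the two matrix computations above. Finally, note that for $N\in8\Z$ the first of those computations gives a nontrivial monodromy, so $\prcol^N$ is then a single $\grp^N$-orbit and, by Proposition~\ref{colorings_and_orbits}, admits no consistent coloring; thus the hypothesis $N\in4+8\Z$ is sharp.
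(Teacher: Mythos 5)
Your strategy is genuinely different from the paper's: the paper never counts orbits first, but instead writes down the coloring explicitly (parity of $j+\ell$ on $\prcol^N_{\acirc{1}}$, transported by $\refl_{\Gamma_{1j}}$ and $\refl_{\Gamma_{1^\perp 3}}$) and verifies consistency along the seven remaining circles by the matrix identities \ref{triple_product} and the three products preceding them; you instead try to prove directly that $\prcol^N$ has exactly two $\grp^N$-orbits and then invoke the converse half of \ref{colorings_and_orbits} (which the paper states but explicitly declines to prove as "not needed" -- formally admissible, and that direction does hold with a short argument using the local class structure). Your two displayed computations are correct: $\refl_{\Gamma_{13}}\refl_{\Gamma_{23}}\refl_{\Gamma_{12}}$ agrees with \ref{triple_product} and does send $\Omega^N_{j\ell}$ to $\Omega^N_{j+N/4,\,\ell+1}$, and the second composite does shift the index sum by $N/2$; the local-class description on $\prcol^N_{\acirc{1}}$ and its transport by conjugation are also sound.

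The gap is the sentence ``$\mathcal O$ is a double cover of the complete graph $K_6$.'' With edges recorded for \emph{all} generators (which is what the identification of orbits with components of $\mathcal O$ requires), $\mathcal O$ is a double cover of a multigraph, not of $K_6$: distinct generators join the same pair of solid tori, e.g.\ $\refl_{\Gamma_{12}}$ and $\refl_{\Gamma_{1^\perp 2^\perp}}$ both carry $T_{\acirc{1}}$ to $T_{\acirc{2}}$, while the four $\refl_\Gamma$ with $\Gamma\subset\T_1$ and all $N$ reflections $\refl_L$, $L\in\scaf^N_{\T_1}$, carry $T_{\acirc{1}}$ to $T_{\acirc{1}^\perp}$. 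To descend the monodromy to $H_1(K_6)$ you must check that parallel generators induce the \emph{same} matching of local classes; these $2$-gon cycles are independent generators of the cycle space of the multigraph and are not consequences of your triangle checks. They are also not mere bookkeeping valid for all $N\in 4\Z$: for $L=\bigl\{\tfrac{e^{\ii k\pi/N}}{\sqrt2}(e^{\ii s},e^{-\ii s})\bigr\}\in\scaf^N_{\T_1}$ one computes that $\refl_L\circ\refl_{\Gamma_{23}}$ fixes $T_{\acirc{1}}$ and sends $\Omega^N_{j\ell}$ to $\Omega^N_{2k-N/4-1-j,\,-\ell}$, shifting the parity of $j+\ell$ by $N/4+1$; so this compatibility again requires $N\in 4+8\Z$ and is of exactly the same weight as your displayed computations. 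Hence your closing claim that, granting coordinate bookkeeping, ``the whole proof collapses to the two matrix computations'' is not accurate: a further family of parallel-edge checks (the analogue of the paper's verifications along $\Gamma_{1^\perp 2}$, $\Gamma_{1^\perp 2^\perp}$, $\Gamma_{1^\perp 3^\perp}$ and along the circles $\Gamma_{23},\dots$) must be carried out before the monodromy homomorphism on $H_1(K_6)$ is even well defined. The approach can be completed this way, but as written the count of what must be verified is understated, and the sharpness remark for $N\in 8\Z$ likewise needs only the forward direction of \ref{colorings_and_orbits}, which is the part the paper actually proves.
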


\begin{proof}
We define a coloring $c$ and confirm its consistency.
First we define 
$c|_{\prcol^N_{\acirc{1}}}$ by
\begin{equation}
  c(\Omega^N_{j\ell})
  =
  (j+\ell) \bmod 2.
\end{equation}
Next, for each $i \in \{2,3,2^\perp,3^\perp\}$
we set
\begin{equation}
  c|_{\prcol^N_{\acirc{j}}}
  =
  c|_{\prcol^N_{\acirc{1}}} \circ \refl_{\Gamma_{1j}}.
\end{equation}
We complete the definition of $c$
by setting
\begin{equation}
  c|_{\prcol^N_{\acirc{1}^\perp}}
  =
  c|_{\prcol^N_{\acirc{3}}} \circ \refl_{\Gamma_{1^\perp3}}.
\end{equation}

It is obvious from the construction of $c$
that it is consistent along
each circle in
\begin{equation}
  \scaf^N
  \cup
  \{
    \Gamma_{12}, \Gamma_{12^\perp},
    \Gamma_{13}, \Gamma_{13^\perp},
    \Gamma_{1^\perp3}
  \},
\end{equation}
so it remains to check consistency along
the seven circles
\begin{equation}
  \Gamma_{23}, \Gamma_{23^\perp},
  \Gamma_{2^\perp3},\Gamma_{2^\perp3^\perp},
  \Gamma_{1^\perp2}, \Gamma_{1^\perp2^\perp},
  \Gamma_{1^\perp3^\perp}.
\end{equation}
For $i \in \{2,2^\perp\}$ and $j \in \{3,3^\perp\}$
to verify consistency along
$\Gamma_{ij}$
(accounting for the first four circles
in this last list)
it suffices to check that
\begin{equation}
\label{first_four}
  \refl_{\Gamma_{1j}}
    \refl_{\Gamma_{ij}}
    \refl_{\Gamma_{1i}}
    \omega^N_{00}
  \in
  \{\omega^N_{k\ell}\}_{k+\ell \in 2\Z}
  \quad
  (i \in \{2,2^\perp\}, j \in \{3,3^\perp\}),
\end{equation}
while for $j \in \{2,2^\perp,3^\perp\}$
to verify consistency along
$\Gamma_{1^\perp j}$
(accounting for the final three circles)
it suffices to check that
\begin{equation}
\label{last_three}
  \refl_{\Gamma_{1j}}
    \refl_{\Gamma_{1^\perp j}}
    \refl_{\Gamma_{1^\perp3}} 
    \refl_{\Gamma_{13}}\omega^N_{00}
  \in
  \{\omega^N_{k\ell}\}_{k+\ell \in 2\Z}
  \quad
  (j \in \{2,2^\perp,3^\perp\}).
\end{equation}

We compute
\begin{equation}
\begin{gathered}
  \refl_{\Gamma_{12}}
    \refl_{\Gamma_{1^\perp 2}}
    \refl_{\Gamma_{1^\perp3}} 
    \refl_{\Gamma_{13}}
      \begin{bmatrix}w_1\\w_2\end{bmatrix}
  =
  \begin{bmatrix}
    -\ii & 0 \\ 0 & \ii
  \end{bmatrix}
  \begin{bmatrix}w_1\\w_2\end{bmatrix},
  \\
  \refl_{\Gamma_{12^\perp}}
    \refl_{\Gamma_{1^\perp 2^\perp}}
    \refl_{\Gamma_{1^\perp3}} 
    \refl_{\Gamma_{13}}
      \begin{bmatrix}w_1\\w_2\end{bmatrix}
  =
  \begin{bmatrix}
    \ii & 0 \\ 0 & -\ii 
  \end{bmatrix}
  \begin{bmatrix}w_1\\w_2\end{bmatrix},
  \\
  \refl_{\Gamma_{13^\perp}}
    \refl_{\Gamma_{1^\perp 3^\perp}}
    \refl_{\Gamma_{1^\perp3}} 
    \refl_{\Gamma_{13}}
      \begin{bmatrix}w_1\\w_2\end{bmatrix}
  =
  \begin{bmatrix}
    -1 & 0 \\ 0 & -1
  \end{bmatrix}
  \begin{bmatrix}w_1\\w_2\end{bmatrix},
\end{gathered}
\end{equation}
whence follows \ref{last_three},
using the fact that $N \in 4\Z$,
and we compute
\begin{equation}
\label{triple_product}
\begin{gathered}
  \refl_{\Gamma_{13}}\refl_{\Gamma_{23}}\refl_{\Gamma_{12}}
    \begin{bmatrix}w_1\\w_2\end{bmatrix}
  =
  \refl_{\Gamma_{13^\perp}}
    \refl_{\Gamma_{2^\perp 3^\perp}}
    \refl_{\Gamma_{12^\perp}}
    \begin{bmatrix}w_1\\w_2\end{bmatrix}
  =
  \begin{bmatrix}
    e^{\ii \pi/4} & 0 \\ 0 & -e^{-\ii \pi/4}
  \end{bmatrix}
    \begin{bmatrix}w_1\\w_2\end{bmatrix},
  \\
  \refl_{\Gamma_{13^\perp}}
    \refl_{\Gamma_{23^\perp}}\refl_{\Gamma_{12}}
    \begin{bmatrix}w_1\\w_2\end{bmatrix}
  =
  \refl_{\Gamma_{13}}
    \refl_{\Gamma_{2^\perp 3}}\refl_{\Gamma_{12^\perp}}
    \begin{bmatrix}w_1\\w_2\end{bmatrix}
  =
  \begin{bmatrix}
    e^{-\ii \pi/4} & 0 \\ 0 & -e^{\ii \pi/4}
  \end{bmatrix}
    \begin{bmatrix}w_1\\w_2\end{bmatrix},
\end{gathered}
\end{equation}
whence follows \ref{first_four},
using the assumption that $N \in 4+8\Z$.
\end{proof}

We can now describe $\grp^N$ in more detail.
Items \ref{two_orbits} and \ref{axial_syms_belong} below
are important for the construction,
and we mention items
\ref{turn_and_shift}
and
\ref{isoclinic_along_scaf_circ}
because they are helpful in relating
the surfaces constructed here
with certain surfaces constructed in \cite{KWtordesing};
we include the remaining items
simply to offer a fuller picture of $\grp^N$.

\begin{prop}[Some further properties of $\grp^N$]
\label{group_description}
Let $N \in 4+8\Z$ be positive.
Then
\begin{enumerate}[(i)]
  \item \label{two_orbits}
        the action of $\grp^N$
        on $\prcol^N$
        has precisely two orbits;

  \item  \label{axial_syms_belong}
  for each $\Omega \in \prcol^N$
  we have $\refl_{L_\Omega} \in \grp^N$
  (recalling \ref{prisms_for_three_tori}.\ref{axial_sym});

  \item \label{order}
  $\grp^N$ has order $48N$;

  \item \label{octahedral}
  the group homomorphism
  $\pghopf|_{\grp^N} \to O(3)$
  (recalling \ref{pghopf_declaration}
  and \ref{pghopf_definition})
  has kernel the cyclic group
  \begin{equation*}
    \ker \pghopf|_{\grp^N}
    =
    \Z_N
    :=
    \left\langle
      \rot_{\acirc{1}}^{2\pi/N}
      \rot_{\acirc{1}^\perp}^{2\pi/N}
    \right\rangle
  \end{equation*}
  and image the octahedral group
  (of order $48$)
  \begin{equation*}
    \im \pghopf|_{\grp^N}
    =
    O_h
    :=
    \{
      \mathsf{r} \in O(3)
      \; : \;
      \mathsf{r}
        ([-1,1]^3)
        =[-1,1]^3
    \}.
  \end{equation*}

  \item \label{stabilizer}
  the stabilizer
  $
    \stab_{\grp^N}(T_{\acirc{1}})
    :=
    \{
      \rot \in \grp^N
      \; : \;
      \rot T_{\acirc{1}} = T_{\acirc{1}}
    \}
  $
  of $T_{\acirc{1}}$ in $\grp^N$
  has order $8N$
  and is generated by
  $\refl_{L_{\Omega_{00}}}$,
  $\refl_{L_2}$, and $\refl_{L_3}$
  for any single 
  $L_2 \in \scaf^N_{\T_2}$
  and any single $L_3 \in \scaf^N_{\T_3}$;

  \item \label{stabilizer_generates}
  $\grp^N$ can be obtained
  from $\stab_{\grp^N}(T_{\acirc{1}})$
  by adjoining
  $\refl_{\Gamma_{12}}$;

  \item \label{L_subgroup}
  the subgroup
  $
    \grp_{\scaf^N}
    :=
    \langle
      \refl_L
    \rangle_{L \in \scaf^N}
  $
  of $\grp^N$ has order $8N$;

  \item \label{Gamma_subgroup}
  the subgroup
  $
    \grp_\Gamma
    :=
    \langle
      \refl_\Gamma
    \rangle_{\Gamma \in \rcol}
  $
  of $\grp^N$ has order $48$;

  \item \label{turn_and_shift}
  $
    \rot_{\acirc{1}}^{\pi/2}
      \left(
        \rot_{\acirc{1}}^{\pi/N}
          \rot_{\acirc{1}^\perp}^{\pi/N}
      \right)
    \in
    \grp^N
  $; and
  
  \item \label{isoclinic_along_scaf_circ}
  for any $L \in \scaf^N$
  we have
  $
    \rot_L^{\pi/4}\rot_{L^\perp}^{\pi/4}
    \in
    \grp^N   
  $,
  where the relative orientation
  of $L$ and $L^\perp$
  is the one such that this product
  preserves the $\T_i$
  containing $L \cup L^\perp$.
\end{enumerate}
\end{prop}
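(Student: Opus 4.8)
The plan is to analyze $\grp^N$ through the Hopf homomorphism $\pghopf$ of \ref{pghopf_declaration}. First observe that each generator lies in $U(2)\cup\overline{U}(2)$ (the $\refl_L$ with $L\in\scaf^N$ being antiunitary---for instance $\refl_{C^0_0}$ is complex conjugation---and the $\refl_\Gamma$ with $\Gamma\in\rcol$ being unitary of determinant $-1$, as one reads off the displayed matrices), so $\pghopf|_{\grp^N}$ is defined. I would then compute its image. Every circle in $\acol\cup\rcol$ is a Hopf fibre, so $\phopf(\bigcup\acol)$ is the set of six octahedron vertices $(\pm\tfrac12,0,0),(0,\pm\tfrac12,0),(0,0,\pm\tfrac12)$; since $\grp^N$ preserves $\acol$ by \ref{action_on_collections}, $\pghopf(\grp^N)$ permutes these, so $\pghopf(\grp^N)\subseteq O_h$. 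Conversely, each $L\in\scaf^N_{\T_i}$ maps under $\phopf$ onto the great circle $\phopf(\T_i)$ while $\refl_L$ fixes $L$ pointwise, so $\pghopf(\refl_L)$ is the reflection of $\Sph^2(1/2)$ through $\phopf(\T_i)$; these three reflections together with $\pghopf(\refl_{\Gamma_{12}})$ (a half-turn of $\Sph^2(1/2)$ about $\phopf(\Gamma_{12})$) already generate $O_h$. Hence $\im\pghopf|_{\grp^N}=O_h$, of order $48$, which is the image assertion of \ref{octahedral}.

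Next I would determine $\ker\pghopf|_{\grp^N}$. As $\ker\pghopf$ is the group of scalars $(w_1,w_2)\mapsto(e^{\ii\alpha}w_1,e^{\ii\alpha}w_2)$ (exactly the isometries fixing every Hopf fibre), $\ker\pghopf|_{\grp^N}$ is cyclic. Let $L_0\in\scaf^N_{\T_2}$ be the circle through $(\pm1,0)$, so $L_0=C^0_0$ and $\refl_{L_0}$ is complex conjugation, and let $L_1\in\scaf^N_{\T_2}$ be the circle through $(\pm e^{\ii\pi/N},0)$; then $L_1=g\,L_0$ with $g:=\rot_{\acirc{1}}^{\pi/N}\rot_{\acirc{1}^\perp}^{\pi/N}$, so $\refl_{L_1}=g\,\refl_{L_0}\,g^{-1}$, and since complex conjugation conjugates $g^{-1}$ to $g$ we get $\refl_{L_1}\refl_{L_0}=g^2=\rot_{\acirc{1}}^{2\pi/N}\rot_{\acirc{1}^\perp}^{2\pi/N}$, an element of order $N$. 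Thus $\Z_N\le\ker\pghopf|_{\grp^N}$, and combining with the previous paragraph, $\abs{\grp^N}=\abs{\ker\pghopf|_{\grp^N}}\cdot 48\ge 48N$.

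The matching upper bound---the crux of the argument---comes from the action of $\grp^N$ on the $48N$-element set $\prcol^N$, preserved by $\grp^N$ (\ref{action_on_collections}) and faithful since an element fixing every prism would equal $\refl_{L_\Omega}$ for each $\Omega$, forcing all the axes $L_\Omega$ to coincide. By \ref{consistency}---where the hypothesis $N\in 4+8\Z$ enters---together with \ref{colorings_and_orbits}, $\prcol^N$ has exactly two $\grp^N$-orbits, which is \ref{two_orbits}. By \ref{prisms_for_three_tori}.\ref{axial_sym} the $O(4)$-symmetry group of any $\Omega\in\prcol^N$ is $\langle\refl_{L_\Omega}\rangle$, of order $2$, so $\abs{\stab_{\grp^N}(\Omega)}\le 2$ and the orbit of $\Omega$ has size at least $\abs{\grp^N}/2$; since the two orbits together exhaust the $48N$ prisms, $\abs{\grp^N}\le 48N$. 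Therefore $\abs{\grp^N}=48N$, which is \ref{order}; hence $\abs{\ker\pghopf|_{\grp^N}}=N$, so $\ker\pghopf|_{\grp^N}=\Z_N$, completing \ref{octahedral}; and the orbit-size inequalities become equalities, so $\stab_{\grp^N}(\Omega)=\langle\refl_{L_\Omega}\rangle$---in particular $\refl_{L_\Omega}\in\grp^N$---for every $\Omega$, which is \ref{axial_syms_belong}.

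The remaining items follow from this structure by the same methods. For \ref{stabilizer}: $\grp^N$ acts transitively on $\{T_C\}_{C\in\acol}$ (since $O_h$ is transitive on the six octahedron vertices), so $\abs{\stab_{\grp^N}(T_{\acirc{1}})}=48N/6=8N$; one checks that $\refl_{L_{\Omega_{00}}},\refl_{L_2},\refl_{L_3}$ all preserve $T_{\acirc{1}}$ and that the group they generate acts transitively on the $8N$ prisms of $\prcol^N_{\acirc{1}}$, hence equals the stabilizer. For \ref{stabilizer_generates}: $\refl_{\Gamma_{12}}$ carries $T_{\acirc{1}}$ to $T_{\acirc{2}}$, and the image of $\stab_{\grp^N}(T_{\acirc{1}})$ in $O_h$ is the order-$8$ stabilizer of $\phopf(\acirc{1})$, transitive on $\{T_{\acirc{2}},T_{\acirc{2}^\perp},T_{\acirc{3}},T_{\acirc{3}^\perp}\}$, so $\langle\stab_{\grp^N}(T_{\acirc{1}}),\refl_{\Gamma_{12}}\rangle$ has orbit all six $T_C$ and order $8N\cdot 6=48N=\abs{\grp^N}$. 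Items \ref{L_subgroup} and \ref{Gamma_subgroup} are read off via $\pghopf$: $\pghopf(\grp_{\scaf^N})=(\Z/2)^3$ (order $8$) with kernel $\Z_N$ by \ref{octahedral}, so $\abs{\grp_{\scaf^N}}=8N$; while $\grp_\Gamma\subseteq U(2)$ maps onto $O_h\cap SO(3)\cong S_4$ (order $24$; the $\refl_\Gamma$ realizing all six half-turns of the cube, the transpositions of $S_4$) with kernel $\{\pm I\}$ by a direct matrix check, so $\abs{\grp_\Gamma}=48$. Finally \ref{turn_and_shift} and \ref{isoclinic_along_scaf_circ} are extracted from the explicit products around \ref{triple_product}: for instance $\refl_{\Gamma_{13}}\refl_{\Gamma_{23}}\refl_{\Gamma_{12}}=\operatorname{diag}(e^{\ii\pi/4},-e^{-\ii\pi/4})=\rot_{\acirc{1}}^{3\pi/4}\rot_{\acirc{1}^\perp}^{\pi/4}$, and multiplying by a suitable power of $\rot_{\acirc{1}}^{2\pi/N}\rot_{\acirc{1}^\perp}^{2\pi/N}$---the required power being an integer precisely because $N\equiv 4\pmod 8$ makes $N/4$ odd---yields $\rot_{\acirc{1}}^{\pi/2}\bigl(\rot_{\acirc{1}}^{\pi/N}\rot_{\acirc{1}^\perp}^{\pi/N}\bigr)$, which is \ref{turn_and_shift}; and \ref{isoclinic_along_scaf_circ} follows similarly after conjugating the analogous identity for $C^0_0$ by the Hopf rotation carrying $C^0_0$ to $L$. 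The genuinely delicate point is the double count of the third paragraph: that $\prcol^N$ has precisely two orbits (which rests on the consistency result, and hence on $N\in 4+8\Z$) and that no prism has a $\grp^N$-stabilizer larger than its order-$2$ ambient symmetry group; everything else is bookkeeping with $\pghopf$ or an explicit matrix computation.
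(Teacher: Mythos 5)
Your overall plan---a lower bound $\abs{\grp^N}\geq 48N$ via $\pghopf$ together with an upper bound from the two-orbit count and the order-$2$ prism symmetry groups, with item \ref{axial_syms_belong} then extracted from the resulting equalities rather than from explicit group elements as in the paper---is a legitimate reorganization. But the step establishing $\im \pghopf|_{\grp^N}=O_h$ fails as stated. The elements $\pghopf(\refl_L)$, $L\in\scaf^N_{\T_i}$, are exactly the three reflections through the coordinate planes, generating the diagonal group $\{\pm1\}^3$ of order $8$, and $\pghopf(\refl_{\Gamma_{12}})$ is the half-turn $(x,y,z)\mapsto(-x,-z,-y)$; every product of these is a signed permutation matrix whose underlying permutation lies in $\langle (y\,z)\rangle$, so your named generators produce a subgroup of order $16$, not $48$ (you never obtain a $3$-cycle of the coordinate axes). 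Since the lower bound $\abs{\grp^N}\geq 48N$---and with it \ref{order}, \ref{octahedral}, and your derivation of \ref{axial_syms_belong}---rests precisely on the image having order $48$, this is a genuine gap. It is easily repaired: use two circles of $\rcol$ whose Hopf images are edge-midpoint axes giving distinct coordinate transpositions (e.g.\ $\Gamma_{12}$ and $\Gamma_{23}$), or, as the paper does, observe that the six half-turns $\pghopf(\refl_\Gamma)$, $\Gamma\in\rcol$, already generate $O_h\cap SO(3)$ and then adjoin a single $\pghopf(\refl_L)$ to get all of $O_h$.

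A second, smaller error occurs in \ref{stabilizer}: no subgroup of $\grp^N$ can act transitively on the $8N$ prisms of $\prcol^N_{\acirc{1}}$, because $\grp^N$ preserves the consistent coloring of \ref{consistency} and therefore already splits $\prcol^N_{\acirc{1}}$ into two classes of $4N$ prisms each (this is the content of \ref{two_orbits}). So your proposed identification of $\langle\refl_{L_{\Omega_{00}}},\refl_{L_2},\refl_{L_3}\rangle$ with $\stab_{\grp^N}(T_{\acirc{1}})$ cannot go through transitivity; instead show this subgroup has an orbit of size $4N$ in $\prcol^N_{\acirc{1}}$ and contains the order-$2$ stabilizer $\langle\refl_{L_{\Omega_{00}}}\rangle$ of $\Omega_{00}$, hence has order at least $8N$, and compare with your (correct, and simpler than the paper's) index computation $\abs{\stab_{\grp^N}(T_{\acirc{1}})}=48N/6=8N$. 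Two lesser points: in \ref{Gamma_subgroup} the assertion that $\ker\pghopf|_{\grp_\Gamma}=\{\pm I\}$, i.e.\ that $\ii I\notin\grp_\Gamma$, needs an actual argument (the paper avoids this by an orbit/coloring count), and in \ref{isoclinic_along_scaf_circ} a scalar Hopf rotation carries $C^0_0$ only to the other circles of $\scaf^N_{\T_2}$, so for $L\in\scaf^N_{\T_1}\cup\scaf^N_{\T_3}$ you still need an identity such as $\rot_L^{\pi/4}\rot_{L^\perp}^{\pi/4}=\refl_{\Gamma_{12}}\refl_{\acirc{1}}$ as in the paper. The remaining items, including the kernel computation $\refl_{L_1}\refl_{L_0}=\rot_{\acirc{1}}^{2\pi/N}\rot_{\acirc{1}^\perp}^{2\pi/N}$ and the use of $N\in4+8\Z$ in \ref{turn_and_shift}, are correct.
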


\begin{proof}
Item \ref{two_orbits}
is immediate
from \ref{colorings_and_orbits}
and \ref{consistency}.
Given that $\refl_L \in \grp^N$
for every $L \in \scaf^N$,
item \ref{axial_syms_belong}
is equivalent to
\begin{equation}
\label{to_check}
  \rot_{\acirc{1}}^{\frac{\pi}{N}+\frac{\pi}{2}}
      \rot_{\acirc{1}^\perp}^{\frac{\pi}{N}}
  \in
  \grp^N,
\end{equation}
which is in fact identical to claim
\ref{turn_and_shift}.
However, it is clear that
\begin{equation}
\label{iso}
  \rot_{\acirc{1}}^{2\pi/N}\rot_{\acirc{1}^\perp}^{2\pi/N}
  \in
  \langle \refl_L \rangle_{L \in \scaf^N_{\T_2}}
  <
  \grp^N,
\end{equation}
and we see from \ref{triple_product}
that
\begin{equation}
  \rot_{\acirc{1}}^{\pi/4}\rot_{\acirc{1}^\perp}^{-\pi/4}
  \in
  \grp^N,
\end{equation}
so,
because our assumption on $N$
ensures that $\frac{N+4}{8}$ is a positive integer,
we also have
\begin{equation}
  \rot_{\acirc{1}}^{\frac{\pi}{N}+\frac{\pi}{2}} 
    \rot_{\acirc{1}^\perp}^{\frac{\pi}{N}}
  =
  \rot_{\acirc{1}}^{\pi/4}\rot_{\acirc{1}^\perp}^{-\pi/4}
    \left(
      \rot_{\acirc{1}}^{\frac{2\pi}{N}}
      \rot_{\acirc{1}^\perp}^{\frac{2\pi}{N}}
    \right)^{\frac{N+4}{8}}
  \in
  \grp^N,
\end{equation}
verifying \ref{to_check},
and so items
\ref{axial_syms_belong}
and
\ref{turn_and_shift}.

In view of item
\ref{two_orbits}
(and the fact that $\prcol^N$ has cardinality $48N$)
the $\grp^N$ orbit of any $\Omega \in \prcol^N$
consists of $24N$ prisms,
while
item \ref{axial_syms_belong}
and
\ref{prisms_for_three_tori}.\ref{axial_sym}
imply that the stabilizer in $\grp^N$
of any $\Omega \in \prcol^N$
has order two,
confirming item \ref{order}.

For \ref{octahedral}
it is clear that each $\refl_\gamma$
with $\gamma \in \scaf^N \cup \rcol$
preserves the set of fibers of $\phopf$,
and so the map in the statement
is indeed well-defined.
Since $\grp^N$ preserves $\acol$,
it is also clear that
$\pghopf(\grp^N)$ is a subgroup of $O_h$.
Investigating more closely,
we observe that
$\pghopf(\{\refl_\Gamma\}_{\Gamma \in \rcol})$
consists of the six reflections
through each of the six lines
that pass through the origin
and a pair of midpoints of edges
of the cube $[-1,1]^3$.
These last six symmetries
generate $O_h \cap SO(3)$,
and so
by adjoining
any single $\pghopf(\refl_L)$
with  $L \in \scaf^N$
we obtain the full group $O_h$,
verifying the assertion
about the image of the map in \ref{octahedral}.
On the other hand,
from \ref{iso}
we see that the kernel contains
$\Z_N$,
but the orders of $\Z_N$, $\grp^N$, and $O_h$
are respectively
$N$, $48N$, and $48$,
completing the proof of \ref{octahedral}.

We proceed to item \ref{stabilizer}.
Using item \ref{two_orbits} again,
we can verify
that the stabilizer
of $T_{\acirc{1}}$ in $\grp^N$
acting on $\prcol^N_{\acirc{1}}$
has two orbits,
each of cardinality $4N$.
On the other hand,
for any $\Omega \in \prcol^N_{\acirc{1}}$
the reflection
$\refl_{L_\Omega}$
preserves $T_{\acirc{1}}$,
so with item \ref{axial_syms_belong}
and \ref{prisms_for_three_tori}.\ref{axial_sym},
we verify the asserted order
in item \ref{stabilizer}.
The proof of \ref{stabilizer}
is then completed
by observing that,
for any $L_2,L_3$
as in the second half of the statement
of \ref{stabilizer},
the group
$
  \langle
    \refl_{L_{\Omega_{00}}},
    \refl_{L_2},
    \refl_{L_3}
  \rangle
$
also has order $8N$
and preserves $T_{\acirc{1}}$.

Now, for \ref{stabilizer_generates},
let $\subgrp$
be the group generated
by the $\refl_{\Gamma_{12}}$
and the stabilizer
of $T_{\acirc{1}}$ in $\grp^N$.
To prove \ref{stabilizer_generates}
it suffices to check
that $\subgrp$
includes $\refl_\gamma$
for every $\gamma \in \Gamma \cup \scaf^N$.
Clearly $\refl_L \in \subgrp$
for every
$L \in \scaf^N_{\T_2} \cup \scaf^N_{\T_3}$,
but
$\refl_{\Gamma_{12}}\scaf^N_{\T_2}=\scaf^N_{\T_1}$,
so indeed
$
  \langle \refl_L \rangle_{L \in \scaf^N}
  <
  \subgrp
$.
Since
$
  \refl_{\acirc{1}}
  \in
  \langle 
    \refl_L
  \rangle_{L \in \scaf^N_{\T_2} \cup \scaf^N_{\T_3}}
  <
  \subgrp
$
and $\refl_{\Gamma_{12}} \in \subgrp$,
we get also
$
  \refl_{\Gamma_{12\perp}}, ~
  \refl_{\acirc{2}}
  \in
  \subgrp
$,
and then similarly
$
  \refl_{\Gamma_{1^\perp2}}, ~
  \refl_{\acirc{1}^\perp}, ~
  \refl_{\acirc{2}^\perp}, ~
  \refl_{\Gamma_{1^\perp2^\perp}}
  \in
  \subgrp
$.
In particular we have
$\refl_\Gamma \in \subgrp$
for every $\Gamma \in \rcol$
such that $\Gamma \subset \T_3$.
Using $\refl_{L_{\Omega_{00}}} \in \subgrp$,
$\refl_{L_{\Omega_{00}}}\T_3 = \T_2$,
and $\refl_{L_{\Omega_{00}}}\T_1 = \T_1$,
we then get
$\refl_\Gamma \in \subgrp$
for every $\Gamma \in \rcol$
such that $\Gamma \subset \T_2$.
Finally,
since $\refl_{\Gamma_{12}}$
preserves $\T_3$
and exchanges $\T_1$ and $\T_2$,
we get in turn
$\refl_\Gamma \in \subgrp$
for every $\Gamma \in \rcol$
such that $\Gamma \subset \T_1$,
completing the proof of
\ref{stabilizer_generates}.

Turning to \ref{L_subgroup},
it is easy to see that
the orbit under $\grp_{\scaf^N}$
of any $\Omega \in \prcol^N_{\acirc{1}}$
includes at least $4N$ members of
$\prcol^N_{\acirc{1}}$
and at least $4N$ members of
$\prcol^N_{\acirc{1}^\perp}$.
In view of item \ref{two_orbits}
this orbit does not contain any other members
of $\prcol^N_{\acirc{1}} \cup \prcol^N_{\acirc{1}^\perp}$,
and since each $\refl_L$ with $L \in \scaf^N$
preserves each $\T_i$,
the orbit of $\Omega$ under $\grp_{\scaf^N}$
in fact consists of
exactly these $8N$ prisms
and $\grp_{\scaf^N}$
does not include $L_\Omega$,
ending the proof of \ref{L_subgroup}.

One can give a proof of \ref{Gamma_subgroup}
similar to that of
\ref{consistency}.
In outline,
first we observe
that every $\refl_\Gamma$
with $\Gamma \in \rcol$ is unitary
(preserves the orientations
on the fibers of $\phopf$),
while $\refl_{L_{\Omega_{00}}}$
is antiunitary
(reverses the orientations
on the fibers of $\phopf$),
and so it suffices to check that the orbit
of $\Omega_{00}$ under $\grp_\Gamma$
has cardinality $48$.
We may assume (in this paragraph) $N=4$.
Next we define a map
$c: \prcol^4 \to \{0,1,2,3\}$
by first defining
$c|_{\prcol^4_{\acirc{1}}}$
as
$
  c(\Omega_{ij})
  :=
  (j-i) \bmod 4
$
and then extending to all of $\prcol^4$
as in
the proof of \ref{consistency}.
Then one can check,
much as in the proof of
\ref{consistency}
and with some modifications
to the discussion immediately preceding it,
that $\grp_\Gamma$ preserves $c$
and then that it has order $48$.
We leave the details to the reader.

It remains only to check
item \ref{isoclinic_along_scaf_circ}.
In fact it suffices to consider $L \in \scaf^N_{\T_3}$.
Then $L$ and $L^\perp$
orthogonally intersect both
$\acirc{1}$ and $\Gamma_{12}$;
furthermore
$\acirc{1}, \Gamma_{12} \subset \T_3$,
and
$d^{\Sph^3}_{\acirc{1}}$
takes the constant value $\frac{\pi}{8}$
on $\Gamma_{12}$.
Accordingly,
\begin{equation}
  \rot_{L}^{\pi/4}\rot_{L^\perp}^{\pi/4}
  =
  \refl_{\Gamma_{12}}\refl_{\acirc{1}}
  \in
  \grp^N,
\end{equation}
where each of $L$ and $L^\perp$
is oriented from either point of
intersection with $\acirc{1}$
toward the closest point (to this one)
of intersection with $\Gamma_{12}$,
completing the proof.
\end{proof}

\subsection*{Proof of Theorem \ref{thm:three_tori}}

Let $N=4+8m$,
and let $Q$ be the hexagon
in $\partial \Omega_{00}$
which is the union of the edges
contained in $\bigcup \rcol \cup \bigcup \scaf^N$.
By \ref{prisms_for_three_tori}.\ref{congruence}
and \ref{unique_disc}
there exists a unique connected minimal surface
$D \subset \Omega$ with $\partial \Omega=Q$,
and $D$ is an embedded disc.
By the maximum principle
$
  D \cap \partial \Omega_{00}
  =
  \partial D
$.
Then from the uniqueness of $D$
and the fact
(\ref{group_description}.\ref{two_orbits})
that the action
of $\grp^N$ on $\prcol^N$
has two orbits
it follows that the set
$M_m := \grp^N(D)$
is a closed embedded minimal surface.

From the geometry of $\Omega_{00}$
(\ref{prisms_for_three_tori}
and \ref{prism_geometry})
we see that every vertex angle of $Q$ is right.
Using the Gauss-Bonnet theorem
and the fact that $D$ is a disc
with piecewise geodesic boundary,
we conclude that $D$ has total curvature
$-\pi$.
Thus $M_m$ has total curvature
$-24N\pi$,
and another application
of the Gauss-Bonnet theorem
then establishes that
$M_m$ has genus
$6N+1 = 48m + 25$.
By construction
$M_m$ contains
$\bigcup \rcol \cup \scaf^N$
and is invariant under $\grp^N$.
This completes the proof
of the existence portion
of \ref{thm:three_tori}.

The uniqueness assertion
follows immediately
from the reflection principle
and \ref{unique_disc}.
To prove that $M_m$
is congruent,
for sufficiently large $m$,
to the surface constructed
in \cite{KWtordesing}*{Theorem 7.1}
as a small graphical perturbation
of the initial surface
$N(2, 2m+1, 1, 1, 1, 0, 1)$
specified in \cite{KWtordesing}*{Definition 4.13}
it suffices---as explained at the end
of the proof of Section \ref{sec:uniqueness},
in the proof of
Theorem \ref{odd_cs_uniqueness}---to verify
that (modulo congruence)
this initial surface
is invariant under $\grp^N$
and that its intersection
with the interior of $\Omega_{00}$
is nonempty
and has boundary $\partial D$. 

These last properties are easily checked
by inspection of
\cite{KWtordesing}*{4.15}
(reviewing also the supporting definitions),
with the aid of
\ref{group_description}.\ref{turn_and_shift}
and \ref{group_description}.\ref{isoclinic_along_scaf_circ}.
For this it is important to note an isolated misprint
in \cite{KWtordesing}*{4.15}:
the $\Sph^3$ isometry
\begin{equation}
\label{misprint}
  \rot_{\underline{C}}^{\pi/kmn'_{(-1)^j}}
  \;
  \rot_{\underline{C}^\perp}^{\pi/kmn'_{(-1)^j}}
\end{equation}
appearing in parentheses
in the middle line of \cite{KWtordesing}*{4.15}
is missing factors of $2$
in the denominators of both exponents
and should instead read
\begin{equation}
\label{corrected}
  \rot_{\underline{C}}^{\pi/2kmn'_{(-1)^j}}
  \;
  \rot_{\underline{C}^\perp}^{\pi/2kmn'_{(-1)^j}},
\end{equation}
which
in the notation of the present article
is (up to an overall congruence)
\begin{equation}
  \rot_{\acirc{1}}^{\pi/N}
  \; 
  \rot_{\acirc{1}^\perp}^{\pi/N}.
\end{equation}
The significance of this correction is that
the isometry \ref{misprint}
amounts to shifting the corresponding
singly periodic Scherk surface---glued in
along the great circle indexed by $j$---by
a full period, rather than a half period,
counter to the intended effect
of producing distinct initial surfaces,
which is instead accomplished by \ref{corrected},
corresponding to a half-period shift.
\qed

\begin{bibdiv}
\begin{biblist}

\bib{ChoeSoretTordesing}{article}{
  title={New minimal surfaces in $\mathbb{S}^3$
         desingularizing the Clifford tori},
  author={Choe, Jaigyoung},
  author={Soret, Marc},
  journal={Math. Ann.},
  year={2016},
  volume={364},
  pages={763--776},
}

\bib{KWtordesing}{article}{
  author  = {Kapouleas, Nikolaos},
  author  = {Wiygul, David},
  journal = {Math. Ann.},
  title   = {Minimal surfaces in the three-sphere by desingularizing intersecting {C}lifford tori},
  volume={383},
  year={2022},
  pages={119--170},
}

\bib{KWlindex}{article}{
  author = {Kapouleas, Nikolaos},
  author = {Wiygul, David},
  title = {The index and nullity of the {L}awson surfaces {$\xi_{g,1}$}},
  journal = {Camb. J. Math.},
  volume = {8},
  year = {2020},
  number = {2},
  pages = {363--405},
}

\bib{KWlchar}{article}{
  author  = {Kapouleas, Nikolaos},
  author  = {Wiygul, David},
  journal = {J. Reine Angew. Math.},
  title   = {The {L}awson surfaces are determined
             by their symmetries and topology},
  volume = {786},
  year = {2022},
  pages = {155--173},
}

\bib{Lawson}{article}{
  title={Complete minimal surfaces in $S^3$},
  author={Lawson, H.B., Jr.},
  journal={Ann. Math.},
  date={1970},
  volume={92},
  pages={335--374},
 }

\bib{MeeksYauEmbedded}{article}{
  title={The existence of embedded minimal surfaces
    and the problem of uniqueness},
  author={Meeks, William W.},
  author={Yau, Shing-Tung},
  journal={Math Z.},
  date={1982},
  volume={179},
  pages={151--168},
}

\end{biblist}
\end{bibdiv}

\end{document}